\newtheorem{thm}{Theorem}[section]
\newtheorem{cor}[thm]{Corollary}
\newtheorem{lem}[thm]{Lemma}
\newtheorem{prop}[thm]{Proposition}
\theoremstyle{definition} 
\newtheorem{defi}[thm]{Definition}
\newtheorem{rmk}[thm]{Remark}
\newtheorem{example}[thm]{Example}
\numberwithin{equation}{section}
\theoremstyle{remark}
\newtheorem{claim}{\textsc{Claim}}
\newtheorem*{claim*}{\textsc{Claim}}
\newcounter{smallromans}
   \def\MR#1{}
\providecommand{\MR}[1]{}
\providecommand{\MR}{\relax\ifhmode\unskip\space\fi MR }
\providecommand{\href}[2]{#2}
\begin{document}
\title{Strong universality, recurrence, and analytic P-ideals in dynamical systems}

\author{Paolo Leonetti}
\address{Universit\'a degli Studi dell'Insubria, 
via Monte Generoso 71, 21100 Varese, Italy}
\address{
Universit\'a Luigi Bocconi, via Roentgen 1, Milan 20136, Italy}
\email{leonetti.paolo@gmail.com}

\keywords{Analytic P-ideal; nonlinear dynamical system; Furstenberg families; universality and recurrence; dense orbit.}
\subjclass[2010]{Primary: 37B02, 37B20; Secondary: 37B99, 47A16.}


\begin{abstract} 
\noindent 
Given a dynamical system $(X,T)$ and a family $\mathsf{I}\subseteq \mathcal{P}(\omega)$ of  \,\textquotedblleft small\textquotedblright\,  sets of nonnegative integers, a point $x \in X$ is said to be $\mathsf{I}$-strong universal if for each $y \in X$ there exists a subsequence $(T^nx: n \in A)$ of its orbit which is convergent to $y$ and, in addition, the set of indexes $A$ is \textquotedblleft not small,\textquotedblright\, 
that is, $A\notin \mathsf{I}$. 
An analoguous definition is given for $\mathsf{I}$-strong recurrence. In this work, we provide several 
structural properties and 
relationships between $\mathsf{I}$-strong universality, $\mathsf{I}$-strong recurrence, and the corresponding ordinary notions of $\mathsf{I}$-universality and $\mathsf{I}$-recurrence. 

As applications, 
we provide sufficient conditions which ensure the equivalence between the above notions and the property that each nonempty open set contains some cluster point of some orbit. 
In addition, we show that if $T$ is a homomorphism on a Fr\'{e}chet space $X$ and there exists a dense set of vectors with null orbit, then for each $y \in X$ the set of all vectors $x \in X$ such that $\lim_{n \in A}T^nx=y$ for some $A\subseteq \omega$ with nonzero upper asymptotic density is either empty or comeager.  

In the special case of linear dynamical systems on Banach spaces with a dense set of uniformly recurrent vectors, we obtain that $T$ is upper frequently hypercyclic if and only if there exists a hypercyclic vector $x \in X$ for which $\lim_{n \in A}T^nx=0$ for some $A\subseteq \omega$ with nonzero upper asymptotic density. 
\end{abstract}
\maketitle
\thispagestyle{empty}

\section{Introduction}\label{sec:intro}

In this work, a dynamical system denotes a pair $(X,T)$ where $X$ is a topological space and $T: X\to X$ is a continuous map. 
We are mainly concerned with the study of structural properties of nonlinear dynamical systems with respect to certain families of \textquotedblleft small\textquotedblright\, subsets of the nonnegative integers $\omega$, which we denote by $\mathsf{I}$. 
Regarding these families $\mathsf{I}$ as subsets of the Cantor space $\{0,1\}^\omega$, we can speak about their topological complexity: for instance, a family $\mathsf{I}$ can be an analytic, Borel, or a $F_\sigma$-subset of $\mathcal{P}(\omega)$. 
The first main novelty of this paper stands in the connection between the complexities of the families $\mathsf{I}$ and the structural properties of the underlying dynamical systems. 
The second one is the introduction and the study of stronger notions of universality and recurrence (precise definitions will be given in the next sections). 

As examples of applications of our results for \emph{linear} dynamical systems (that is, with $T$ continuous linear operator on a topological vector space $X$), we show that:
\begin{enumerate}[label={\rm (\roman{*})}]
\item If 
$X$ is nonzero and $x \in X$ is 
a hypercyclic vector (i.e., it has a dense orbit under $T$), 
then the subsequence 
$(T^{n}x: n\in A)$ is \emph{not} convergent to $x$ for all sets of indexes $A$ with positive upper Banach density (see Theorem \ref{thm:nonexistencestronglyuniversal}). 
\item If $X$ is a Fr\'{e}chet space and there exists a dense set of vectors with orbit convergent to $0$, then for each $y \in X$ the set of all vectors $x \in X$ such that $\lim_{n \in A}T^nx=y$ for some $A\subseteq \omega$ with nonzero upper asymptotic density is either empty or comeager (see Corollary \ref{cor:emptyordenselinear}). 
\item If $X$ is a Banach space and there exists a dense set of uniformly recurrent vector, then $T$ is upper frequently hypercyclic if and only if there exists a hypercyclic vector $x \in X$ such that $\lim_{n \in A}T^nx=0$ for some $A\subseteq \omega$ with positive upper asymptotic density (see Corollary \ref{cor:extensionhilbertcharactrization}).
\end{enumerate}
We refer the reader to \cite{MR2533318, MR2919812} for two excellent monographs on linear dynamical systems.

%

In a moment, we define the notions of universality and recurrence with respect to an ideal $\mathsf{I}$ on $\omega$, together with their \textquotedblleft strong\textquotedblright\, variants (which are, informally, the exact analogues through subsequences in place of neighborhoods). Then, we provide several relationships between the latter ones, based on the structural properties of return sets. In doing so, we also ask to which extent several results which hold in the linear case can be carried to the nonlinear one. This is not done for the sake of generality, but to clarify what are the properties underlying them and, as a byproduct, to include possibly new examples. Not all our main theorems, in fact, rely on the full linearity of $T$. 



\subsection{Preliminaries on ideals} 
Let $\mathsf{I}\subseteq \mathcal{P}(\omega)$ be an ideal, that is, a family a subsets of the nonnegative integers $\omega$ closed under subsets and finite unions. 
It is also assumed that $\mathsf{I}$ contains the family $\mathrm{Fin}$ of finite sets and it is different from the power set $\mathcal{P}(\omega)$. 
As usual, $\mathcal{P}(\omega)$ is identified with the Cantor space $\{0,1\}^\omega$ (i.e., the topological product of countably many $\{0,1\}$, each one with the discrete topology), hence a basic clopen set of $\mathcal{P}(\omega)$ will be a cylinder of the type $\{S\subseteq \omega: S\cap [0,\max F]=F\}$ for some nonempty $F \in \mathrm{Fin}$. For instance, since the topology on $\mathcal{P}(\omega)$ is Polish (hence, Hausdorff) and $\mathrm{Fin}$ contains countably many elements, then $\mathrm{Fin}$ is a $F_\sigma$ ideal. 
For notational convenience, let
$$
\mathsf{I}^\star:=\{\omega \setminus S: S \in \mathsf{I}\} 
\quad \text{ and }\quad 
\mathsf{I}^+:=\mathcal{P}(\omega)\setminus \mathsf{I}
$$
be the dual filter of $\mathsf{I}$ and the the family of $\mathsf{I}$-positive sets, respectively. 
We refer the reader to \cite{MR1711328, Mezathesis} for two excellent monograph on the theory of ideals on $\omega$. 

Notable examples of ideals include $\mathrm{Fin}$ and the families of sets with logathmic density zero, asymptotic density zero, and Banach density zero, i.e., $\mathsf{L}:=\left\{S\subseteq \omega: \mathsf{ld}^\star(S)=0\right\}$, and similarly
$$
\mathsf{Z}:=\left\{S\subseteq \omega: \mathsf{d}^\star(S)=0\right\} 
\quad \text{ and }\quad 
\mathsf{B}:=\left\{S\subseteq \omega: \mathsf{bd}^\star(S)=0\right\}; 
$$
here, $\mathsf{ld}^\star(S):=\limsup_n \sum_{k \in S\cap (0,n]}1/k\log(n)$  stands for the upper logarithmic density of $S\subseteq \omega$, and similarly $\mathsf{d}^\star(S):=\limsup_{n}\frac{1}{n}|S\cap [0,n)|$ for the upper logarithmic density and $\mathsf{bd}^\star(S):=\lim_{n} \max_{k} \frac{1}{n}|S\cap [k,k+n)|$ for the upper Banach density. It is 
immediate that 
$\mathsf{B}\subseteq \mathsf{Z}\subseteq \mathsf{L}$ since $\mathsf{ld}^\star\le \mathsf{d}^\star \le \mathsf{bd}^\star$, cf. \cite{MR4054777}. 
Also, it is not difficult to see that $\mathsf{L}$, $\mathsf{Z}$, and $\mathsf{B}$ are $F_{\sigma\delta}$ ideals. 
Many additional examples of \textquotedblleft well-behaved\textquotedblright\, ideals will be given in Section \ref{sec:furstenberg}. 
Lastly, an ideal $\mathsf{I}$ is maximal (with respect to inclusion) if and only if 
$\mathsf{I}^\star=\mathsf{I}^+$, that is, if and only if $\mathsf{I}^\star$ is a free ultrafilter. 

As anticipated, the ideal $\mathsf{I}$ represents the family of \textquotedblleft small\textquotedblright\, sets of integers. In the recent literature, stronger notions of recurrence and universality (and, in particular, hypercyclicity) have been considered with respect to 
upward directed families $\mathsf{F}\subseteq \mathcal{P}(\omega)$, which are usually called Furstenberg families, see Section \ref{sec:furstenberg} below for details. 
Since $\mathsf{F}$ represents the family of sets of integers which are \textquotedblleft not small,\textquotedblright\, in our case it is enough to define equivalently $\mathsf{F}=\mathsf{I}^+$ with the caveat that we require the additional property that $A\cup B\notin \mathsf{F}$ for all $A,B \notin \mathsf{F}$, so that \textquotedblleft the union of small sets is small\textquotedblright\, (for instance, the family $\mathsf{Z}^+$ of sets with positive upper asymptotic density satisfies it, while the family of sets with positive lower asymptotic density does not; another Furstenberg family which does not satisfy it will be given in Example \ref{example:syndetic} below). 
As we are going to see, this natural additional condition often allows us to obtain (positive) analogues of results which hold in the ordinary case $\mathsf{I}=\mathrm{Fin}$. 
Lastly, we remark that this additional property on $\mathsf{F}$ already played a significant role in linear dynamics: for instance, it has been called \textquotedblleft partition regular\textquotedblright\, in \cite{MR4489276}, or \textquotedblleft Ramsey property\textquotedblright\, in \cite{MR4398486}. 



%

\subsection{Cluster points, universality, and recurrence} 
Given an ideal $\mathsf{I}$ on $\omega$ and a sequence $\bm{x}=(x_n)$ taking values in a topological space $X$, we say that $\eta \in X$ is an $\mathsf{I}$\emph{-cluster point} of $\bm{x}$ if 
$$
\{n \in \omega: x_n \in U\} \in \mathsf{I}^+
$$
for all open sets $U\subseteq X$ containing $\eta$. The set of $\mathsf{I}$-cluster points is denoted by $\Gamma_{\bm{x}}(\mathsf{I})$. In the literature, $\mathsf{Z}$-cluster points are usually \textquotedblleft statistical cluster points,\textquotedblright\, see e.g. \cite{MR1181163}. Several properties and characterizations of $\mathsf{I}$-cluster points can be found in \cite{MR3920799}. 

Let $\mathcal{T}=(T_n: n \in \omega)$ be a sequence of continuous maps $T_n: X\to Y$, where $X$ and $Y$ are two topological spaces, and, for each $x \in X$ and for each nonempty open set $U\subseteq X$, 
define the sequence $\mathcal{T}x$ and the return set $N_{\mathcal{T}}(x,U)$ by
$$
\mathcal{T}x:=\left(T_nx: n \in \omega\right)
\quad \text{ and }\quad 
N_{\mathcal{T}}(x,U):=\{n \in \omega: T_nx \in U\},
$$
respectively. If $X=Y$ and $\mathcal{T}=(T^n: n \in \omega)$ for some continuous map $T: X\to X$, we write simply $N_T(x,U)$ or $N(x,U)$ if there is no risk of confusion. 
Note that, in the latter case, $\mathcal{T}x$ is the ordinary orbit 
$
\mathrm{orb}(x,T):=(x,Tx,T^2x,\ldots).
$

\begin{defi}\label{defi:Ihypercyclic}
Let $\mathsf{I}$ be ideal on $\omega$ and $\mathcal{T}=(T_n: n \in \omega)$ be a sequence of continuous maps $T_n: X\to Y$, where $X$ and $Y$ are topological spaces. 
We say that $x \in X$ is an $\bm{\mathsf{I}}$\textbf{-universal point} if every $y \in Y$ is an $\mathsf{I}$-cluster point of the sequence $\mathcal{T}x$, that is, 
$$
\Gamma_{\mathcal{T} x}(\mathsf{I})=Y.
$$
We denote the set of $\mathsf{I}$-universal points by $\mathrm{Univ}_{\mathcal{T}}(\mathsf{I})$. The sequence $\mathcal{T}$ is said to be $\mathsf{I}$-universal if it admits an $\mathsf{I}$-universal point. 
\end{defi}

Hereafter, $\mathrm{Fin}$-universal points are simply called universal points. In addition, a dynamical system $(X,T)$ is said to be $\mathsf{I}$-universal if $(T^n: n \in \omega)$ is $\mathsf{I}$-universal and we write $\mathrm{Univ}_{T}(\mathsf{I}):=\mathrm{Univ}_{(T^n)}(\mathsf{I})$. A similar remark applies for the other definitions below.

Note that a point $x \in X$ is $\mathsf{I}$-universal if and only if $N_{\mathcal{T}}(x,V) \in \mathsf{I}^+$ for all nonempty open sets $V\subseteq Y$. In the special case of linear dynamical systems $(X,T)$, the term \textquotedblleft universal\textquotedblright\, is commonly replaced by \textquotedblleft hypercyclic\textquotedblright\, (we are going to use this convention also in this work, see e.g. Corollary \ref{cor:extensionhilbertcharactrization} below). 
Accordingly, the vectors in $\mathrm{Univ}_{T}(\mathsf{I})$ are usually called: hypercyclic if $\mathsf{I}=\mathrm{Fin}$, reiteratively hypercyclic if $\mathsf{I}=\mathsf{B}$, and upper frequently hypercyclic if $\mathsf{I}=\mathsf{Z}$, see e.g. \cite{MR3552249} and references therein. 

Now, suppose that $\mathcal{T}$ is $\mathsf{I}$-universal. Then $Y$ has to be separable: indeed, the image of $\mathcal{T}x$ is dense in $Y$ for every $\mathsf{I}$-universal point $x$. 
Another simple observation is that, if $Y$ has two disjoint nonempty open sets $V_1,V_2\subseteq Y$ then $\mathsf{I}$ is not maximal: for, $N_{\mathcal{T}}(x,V_1)$ and $N_{\mathcal{T}}(x,V_2)$ are two disjoint $\mathsf{I}$-positive sets for every $\mathsf{I}$-universal point $x$. 
A stronger necessary condition on the structure of ideal $\mathsf{I}$ (that is, the existence of a sequence of \textquotedblleft sufficiently sparse\textquotedblright\, sequence of $\mathsf{I}$-positive sets) will be given below in Proposition \ref{prop:lipmaps}. At this point, one may wonder if $\mathsf{I}$ has necessarily a low topological complexity. For instance, is it true that if $\mathcal{T}$ is $\mathsf{I}$-universal then $\mathsf{I}$ is analytic? We answer it negatively in Proposition \ref{prop:nonborel}. 

In the case $X=Y$, we define the weaker notion of recurrence: 
\begin{defi}\label{defi:Irecurrent}
Let $\mathsf{I}$ be ideal on $\omega$ and $\mathcal{T}=(T_n: n \in \omega)$ be a sequence of continuous selfmaps $T_n: X\to X$, where $X$ is a topological space. 
We say that $x \in X$ is an $\bm{\mathsf{I}}$\textbf{-recurrent point} if $x$ is an $\mathsf{I}$-cluster point of the sequence $\mathcal{T}x$, that is, 
$$
x \in \Gamma_{\mathcal{T} x}(\mathsf{I}).
$$
We denote the set of $\mathsf{I}$-recurrent points by $\mathrm{Rec}_{\mathcal{T}}(\mathsf{I})$. The sequence $\mathcal{T}$ is said to be $\mathsf{I}$-recurrent if 
$\mathrm{Rec}_{\mathcal{T}}(\mathsf{I})$ is dense in $X$.
\end{defi}

In other words, a point $x \in X$ is $\mathsf{I}$-recurrent whenever $N_{\mathcal{T}}(x,U) \in \mathsf{I}^+$ for all open sets $U\subseteq X$ containing $x$. 
Note that trivial examples such that the identity map on $X$ show that $\mathsf{I}$-recurrence does not coincide with $\mathsf{I}$-universality. 
We refer to \cite{MR4489276, CardecciaMuro, MR4529811, MR3275437, MR4523465, Grivaux2023} for recent results on $\mathsf{I}$-recurrent points in the context of linear dynamical systems. 


\subsection{Limit points, strong universality and strong recurrence} 

Given an ideal $\mathsf{I}$ on $\omega$ and a sequence $\bm{x}=(x_n)$ taking values in a topological space $X$, we say that $\eta \in X$ is an $\mathsf{I}$\emph{-limit point} of $\bm{x}$ if there exists a subsequence $(x_{n_k})$ such that 
$$
\lim\nolimits_kx_{n_k}=\eta 
\quad \text{ and }\quad 
\{n_k: k \in \omega\} \in \mathsf{I}^+.
$$
The set of $\mathsf{I}$-limit points is denoted by $\Lambda_{\bm{x}}(\mathsf{I})$. 

Since ideals $\mathsf{I}$ are invariant modulo finite sets, it is clear that every $\mathsf{I}$-limit point is always an $\mathsf{I}$-cluster point, so that $\Lambda_{\bm{x}}(\mathsf{I})\subseteq \Gamma_{\bm{x}}(\mathsf{I})$. However, even for well-behaved topological spaces and ideals, they can be extremely different. As an example, if $\alpha$ is an irrational number and $\bm{x}$ is the real sequence such that $x_n$ is the fractional part of $n\alpha$ for each $n \in \omega$, then $\Gamma_{\bm{x}}(\mathsf{Z})=[0,1]$ while $\Lambda_{\bm{x}}(\mathsf{Z})=\emptyset$, cf. \cite[Example 3]{MR1181163}. In the same direction, it is known that the set of $\mathsf{Z}$-limit points is always a $F_\sigma$-set containing all isolated $\mathsf{Z}$-cluster points; and, conversely, given $A\subseteq B\subseteq \mathbb{R}$ where $A$ is a nonempty $F_\sigma$-set containing the isolated points of a closed set $B$, there exists a real sequence $\bm{x}$ such that 
$\Lambda_{\bm{x}}(\mathsf{Z})=A$ and $\Gamma_{\bm{x}}(\mathsf{Z})=B$, see \cite[Corollary 3.3]{MR3883171}.


This premise motivates the following stronger definitions: 
\begin{defi}\label{defi:strongIuniversal}
With the same hypotheses of Definition \ref{defi:Ihypercyclic}, we say that $x \in X$ is a $\bm{\mathsf{I}}$\textbf{-strong universal point} if every $y \in Y$ is an $\mathsf{I}$-limit point of the sequence $\mathcal{T}x$, that is, 
$$
\Lambda_{\mathcal{T}x}(\mathsf{I})=Y.
$$
We denote the set of $\mathsf{I}$-strong universal points by $\mathrm{sUniv}_{\mathcal{T}}(\mathsf{I})$. The sequence $\mathcal{T}$ is said to be $\mathsf{I}$-strong universal if it admits an $\mathsf{I}$-strong universal point. 
\end{defi}
Equivalently, a point $x \in X$ is $\mathsf{I}$-strong universal provided that for each $y \in Y$ there exists a subsequence $(T_{n_k}x: k \in \omega)$ which is convergent to $y$ and $\{n_k: k \in \omega\} \in \mathsf{I}^+$. 
Note that the notion of\, \textquotedblleft sufficiently large subsequence\textquotedblright\, is not new in (linear) topological dynamics: in \cite{MR3084499}, for instance, the authors study the existence of vectors $x$ in separable Banach spaces such that $\lim_{n \in A}T^nx=0$ and $\lim_{n \in B}\|T^nx\|=\infty$, for some $A,B\subseteq \omega$ with $\mathsf{d}^\star(A)=\mathsf{d}^\star(B)=1$; see also \cite{MR3334899, MR4508017}, \cite[Section 4]{MR3255465}, and \cite[Theorem 5.23]{MR4238631}.

\begin{defi}\label{defi:Istrongrecurrent}
With the same hypotheses of Definition \ref{defi:Irecurrent}, we say that $x \in X$ is a $\bm{\mathsf{I}}$\textbf{-strong recurrent point} if $x \in X$ is an $\mathsf{I}$-limit point of the sequence $\mathcal{T}x$, that is, 
$$
x \in \Lambda_{\mathcal{T}x}(\mathsf{I}).
$$
We denote the set of $\mathsf{I}$-strong universal points by $\mathrm{sRec}_{\mathcal{T}}(\mathsf{I})$. The sequence $\mathcal{T}$ is said to be $\mathsf{I}$-strong recurrent if 
$\mathrm{sRec}_{\mathcal{T}}(\mathsf{I})$ is dense in $X$.
\end{defi}

It is immediate that $\mathsf{I}$-strong universality implies both $\mathsf{I}$-universality and $\mathsf{I}$-strong recurrence, and that each of the latter ones implies $\mathsf{I}$-recurrence. In addition, $\mathsf{I}$-universality implies $\mathsf{J}$-universality whenever $\mathsf{J}\subseteq \mathsf{I}$, and analogously for the other notions. 

Of course, if $X$ is first countable, $\mathrm{Fin}$-strong universality and $\mathrm{Fin}$-strong recurrence coincide with $\mathrm{Fin}$-universality and $\mathrm{Fin}$-recurrence, respectively, cf. \cite[Lemma 3.1(ii)]{MR3920799}. 
However, as we are going to show later there is a major drawback:  the notions of $\mathsf{I}$-strong recurrence and $\mathsf{I}$-strong universality are satisfied by only somewhat trivial examples whenever $\mathsf{I}$ is sufficiently large (more precisely, if $\mathsf{B}\subseteq \mathsf{I}$, hence including the case of the ideal $\mathsf{Z}$ of asymptotic density zero sets), see Corollary \ref{thm:nonexistencebdstronguniversal} and Corollary \ref{cor:noZstronguniversal} below. 


\subsection{Plan} In Section \ref{sec:furstenberg}, we show that certain (complements of) ideals on $\omega$ with low topological complexity are upper Furstenberg families. We proceed in Section \ref{sec:structuralbasics} with the discussion of the topological structure of the sets defined above, and necessary conditions on $\mathsf{I}$ for the existence of $\mathsf{I}$-universal dynamical systems. In Section \ref{sec:returnsets}, starting from structural properties of return sets, we show characterizations of $\mathsf{B}$-universality (Theorem \ref{thm:reiterativeequalhypercyclic}), characterizations of $\mathsf{I}$-universality for left-translation invariant ideals $\mathsf{I}$ (Theorem \ref{thm:upperfrequentcharacterization}), and the equivalence between $\mathsf{B}$-strong recurrence and periodicity (Theorem \ref{thm:nonexistencestronglyuniversal}). 

We continue in Section \ref{sec:applications} with further structural results and several applications. 
We start by showing that, if $T$ is additive and there exists a large supply of points with null orbits, then for all $x \in X$ and all nonempty open sets $U,V\subseteq X$ there exists $y \in V$ such that the return set $N(y,U)$ contains $N(x,U_0)$ modulo finite sets, for some $U_0\subseteq U$ (Proposition \ref{prop:endomorphism}). 
Using the topological Banach--Mazur game, we show that the set of elements with a prescribed $\mathsf{I}$-limit point is either empty or comeager (Theorem \ref{thm:denseIimitpoints}). 
This allows also to prove, for certain ideals $\mathsf{I}$, the equivalence between the notions of $\mathsf{I}$-universality, $\mathsf{I}$-recurrence, their strong variants, and the property that each nonempty open set contains an $\mathsf{I}$-cluster point of some orbit (Theorem \ref{thm:zerooneorbitlkfj}). The latter condition is reminescent of the so-called \textquotedblleft zero-one law of orbital limit points,\textquotedblright\, see \cite{BonillaErdmann, MR2881542}. 
Additional equivalences for $\mathsf{I}$-universality are given in Corollary \ref{cor:extensionhilbertcharactrization} through the study of a parameter introduced in \cite{MR3255465, MR4238631} for some special instances. To conclude, we provide an Ansari-type result for arithmetic ideals (Theorem \ref{thm:generalizedansari}). 


\subsection{Notations} Given sets $A,B\subseteq \omega$ and an integer $k \in\omega$, we write 
$A-k:=\{n \in \omega: n=a-k \text{ for some }a \in A\}$ and 
$A-B:=\bigcup_{k \in B}(A-k)$.  The sets $A+k$ and $A+B$ are defined analogously. In addition, $k\cdot A:=\{ka: a \in A\}$. The closure of a set $S$ in a given topological space is denoted by $\overline{S}$.


\section{Furstenberg families, $F_\sigma$ ideals, and Analytic P-ideals}\label{sec:furstenberg}

Let us recall the definition of upper Furstenberg families introduced in \cite{MR3847081}:
\begin{defi}\label{defi:upperFurstenberg}
A Furstenberg family (that is, an upward directed family) $\mathsf{F}\subseteq \mathcal{P}(\omega)$ is said to \emph{upper} if it is admits a representation 
\begin{equation}\label{eq:upperFurstenberg}
\mathsf{F}=\bigcup\nolimits_{i \in I}\bigcap\nolimits_{j \in \omega} \mathsf{F}_{i,j},
\end{equation}
for some arbitrary index set $I$ and families $\mathsf{F}_{i,j} \subseteq \mathcal{P}(\omega)$, with the following properties:
\begin{enumerate}
[label={\rm (\textsc{f}\arabic{*})}]
\item \label{item:1upper} each $\mathsf{F}_{i,j}$ is \emph{finitely hereditary upward}, that is, for all $A \in \mathsf{F}_{i,j}$ there exists $F \in \mathrm{Fin}$ for which $B \in \mathsf{F}_{i,j}$ whenever $A\cap F\subseteq B$;
\item \label{item:2upper} $\mathsf{F}$ is \emph{uniformly left-translation invariant}, that is, for all $A \in \mathsf{F}$ there exists $i \in I$ such that $A-k \in \mathsf{F}_{i,j}$ for all $j,k \in \omega$.
\end{enumerate}
In addition, an upward directed family $\mathsf{F}$ with representation \eqref{eq:upperFurstenberg} is said to be \emph{uniformly finitely invariant} if:
\begin{enumerate}
[label={\rm (\textsc{f}\arabic{*})}]
\setcounter{enumi}{2}
\item \label{item:3upper} for all $A \in \mathsf{F}$ there exists $i \in I$ such that $A\setminus [0,k] \in \mathsf{F}_{i,j}$ for all $j,k \in \omega$.
\end{enumerate}
\end{defi}

Even if Definition \ref{defi:upperFurstenberg} looks quite convoluted, we are going to show that it is satisfied by a really large family of examples. 
Several ones were already given by Bonilla and Grosse-Erdmann in \cite[Example 2.11]{MR3847081}, including the complements of the ideals $\mathrm{Fin}$, $\mathsf{B}$, $\mathsf{Z}$, the summable ideal
\begin{equation}\label{eq:defsummable}
\mathsf{I}_{(a_n)}:=\left\{S\subseteq \omega: \sum_{n \in S}a_n<\infty\right\}
\end{equation}
where $(a_n)$ is a given nonnegative real sequence with divergent series, cf. \cite[Section 5]{MR2439433}, and the ideal 
\begin{equation}\label{eq:defnonngetaiveregular}
\mathsf{I}_A:=\left\{S\subseteq \omega: \lim_{n\to \infty}\sum_{k \in S}a_{n,k}=0\right\}
\end{equation}
where $A=(a_{n,k}: n,k \in\omega)$ is an infinite regular matrix with nonnegative real entries. More precisely, we are going to show in Theorem \ref{thm:Fsigmaupper} and Theorem \ref{thm:analyticPidealupper}  that the complements of ideals $\mathsf{I}_{(a_n)}$ and $\mathsf{I}_A$ are Furstenberg families which admits a representation as in \eqref{eq:upperFurstenberg} satisfying \ref{item:1upper} and \ref{item:3upper}. In addition, we identify conditions under which they satisfy also \ref{item:2upper} (for instance, $\mathsf{I}_{(a_n)}^+$ satisfies \ref{item:2upper} if $(a_n)$ is weakly decreasing). 

We provide below further examples of upper Furstenberg families based on the topological complexity of $\mathsf{I}$. 
To this aim, recall that $\mathsf{I}$ is a \emph{P-ideal} if it is $\sigma$-directed modulo finite sets, that is, for every sequence $(S_n)$ of sets in $\mathsf{I}$ there exists $S \in \mathsf{I}$ such that $S_n\setminus S$ is finite for all integers $n\in \omega$. 
Moreover, an ideal $\mathsf{I}$ is said to be \emph{right-translation invariant} if $S+k \in \mathsf{I}$ for all $S \in \mathsf{I}$ and $k \in \omega$. Equivalently, since ideals are invariant modulo finite sets, $S-k \in \mathsf{I}^+$ for all $S \in \mathsf{I}^+$ and $k \in \omega$. The definition of \emph{left-translation invariant} ideal, which will be needed later, is analogous.

Before we proceed, note that an ideal on $\omega$ is neither open nor closed and, in addition, it cannot be $G_\delta$ by a simple application of Baire's category theorem, see e.g. \cite[Proposition 1.2.1]{Mezathesis}. It follows that the smallest possible complexity of $\mathsf{I}$ is $F_\sigma$ (which is the case of $\mathrm{Fin}$). Lastly, it is well known (and not difficult to show) that the ideal $\mathsf{B}$ is a $F_{\sigma\delta}$-ideal which is not $F_\sigma$. In addition, $\mathsf{B}$ is not a $P$-ideal, see e.g. \cite[p. 299]{MR632187}.

\subsection{$F_\sigma$-ideals} 
It is immediate to see that all summable ideals $\mathsf{I}_{(a_n)}$ defined in \eqref{eq:defsummable} are $F_\sigma$-ideals which, in addition, are also $P$-ideals (note that this includes also the case $\mathsf{I}=\mathrm{Fin}$ by setting $a_n=1$ for all $n\in \omega$). At the same time, the class of summable ideals is a rather special class of $F_\sigma$-ideals. Examples of $F_\sigma$ $P$-ideals which are not summable are given in \cite[Section 1.11]{MR1711328}. It is easy to see that there are $F_\sigma$-ideals without the $P$-property: the Fubini product $\mathrm{Fin}\times \emptyset$, which can be defined as 
$$\{S\subseteq \omega: \{n \in \omega: \{S\cap I_n\} \neq \emptyset\}\in \mathrm{Fin}\},$$ 
where $(I_n)$ is a given partition of $\omega$ into infinite sets, is a $F_\sigma$-ideal which is not a $P$-ideal, see \cite[Example 1.2.3(a)]{MR1711328}. A nonexhaustive list of additional examples  include the Fubini sum $\mathrm{Fin}\oplus \mathcal{P}(\omega)=\{S\subseteq \omega: S \cap (2\cdot \omega) \in \mathrm{Fin}\}$, Solecky's ideal \cite[p. 10]{Mezathesis}, the eventually different ideal \cite[p. 12]{Mezathesis}, 
Tsirelson's ideals \cite{MR1680630},
and pathological examples as in \cite[pp. 31--33]{MR1711328}. 

Hereafter, a \emph{lower semicontinuous submeasure} (in short, \emph{lscsm}) is a function $\varphi: \mathcal{P}(\omega) \to [0,\infty]$ such that: 
\begin{enumerate}[label={\rm (\roman{*})}]
\item $\varphi(\emptyset)=0$, 
\item $\varphi(A)\le \varphi(B)$ for all $A\subseteq B\subseteq \omega$,
\item $\varphi(A\cup B)\le \varphi(A)+\varphi(B)$ for all $A,B\subseteq \omega$, 
\item $\varphi(F)<\infty$ for all $F \in \mathrm{Fin}$, and 
\item $\varphi(A)=\sup\{\varphi(A\cap F): F \in \mathrm{Fin}\}$ for all $A\subseteq \omega$. 
\end{enumerate}
By a classical result due to Mazur \cite{MR1124539}, an ideal $\mathsf{I}$ on $\omega$ is a $F_\sigma$-ideal if and only if there exists a lscsm $\varphi$ such that 
\begin{equation}\label{eq:mazurcharacter}
\mathsf{I}=\mathrm{Fin}(\varphi):=\{S\subseteq \omega: \varphi(S)<\infty\} 
\,\,\,\,\,\text{ and }\,\,\,\,\,
\varphi(\omega)=\infty. 
\end{equation}
For instance, if $\mathsf{I}_{(a_n)}$ is a summable ideal as in \eqref{eq:defsummable}, then $\mathsf{I}_{(a_n)}=\mathrm{Fin}(\varphi)$, where $\varphi(S)=\sum_{n \in S}a_n$ for all $S\subseteq \omega$. 

\begin{thm}\label{thm:Fsigmaupper}
Let $\mathsf{I}$ be a $F_\sigma$-ideal on $\omega$. Then $\mathsf{I}^+$ is a Furstenberg family which admits a representation as in \eqref{eq:mazurcharacter} satisfying \ref{item:1upper} and \ref{item:3upper}. 

If, in addition, $\mathsf{I}$ is right-translation invariant then $\mathsf{I}^+$ satisfies also \ref{item:2upper}.
\end{thm}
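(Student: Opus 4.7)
The plan is to apply Mazur's characterization \eqref{eq:mazurcharacter} to obtain a lower semicontinuous submeasure $\varphi$ with $\mathsf{I}=\mathrm{Fin}(\varphi)$ and $\varphi(\omega)=\infty$, so that $\mathsf{I}^+=\{S\subseteq \omega: \varphi(S)=\infty\}$. The representation \eqref{eq:upperFurstenberg} will then be produced with a singleton index set $I=\{0\}$ by setting
\[
\mathsf{F}_{0,j}:=\{S\subseteq \omega: \varphi(S)>j\}, \qquad j \in \omega.
\]
The identity $\mathsf{I}^+=\bigcap_{j \in \omega}\mathsf{F}_{0,j}$ is immediate from the definition of $\mathsf{I}^+$, and upward directedness of $\mathsf{I}^+$ follows at once from monotonicity of $\varphi$ (property~(ii) of a lscsm).

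To verify \ref{item:1upper}, I would fix $A \in \mathsf{F}_{0,j}$, so that $\varphi(A)>j$; the \emph{lower semicontinuity} condition~(v) applied to $A$ gives a finite $F$ with $\varphi(A\cap F)>j$, and then monotonicity yields $\varphi(B)\geq \varphi(A\cap F)>j$, i.e.\ $B \in \mathsf{F}_{0,j}$, whenever $A\cap F\subseteq B$. To verify \ref{item:3upper}, I would combine subadditivity~(iii) and finiteness on finite sets~(iv): for every $A \in \mathsf{I}^+$ and every $k \in \omega$,
\[
\infty=\varphi(A)\leq \varphi(A\cap [0,k])+\varphi(A\setminus [0,k])
\]
forces $\varphi(A\setminus[0,k])=\infty$, so $A\setminus[0,k] \in \mathsf{F}_{0,j}$ for all $j \in \omega$.

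For the second assertion, I would use the equivalent formulation of right-translation invariance recorded immediately above the statement, namely that $A-k \in \mathsf{I}^+$ whenever $A \in \mathsf{I}^+$ and $k \in \omega$. This gives $\varphi(A-k)=\infty$, hence $A-k \in \mathsf{F}_{0,j}$ for all $j,k \in \omega$, which is exactly \ref{item:2upper}. No substantive obstacle is anticipated; the only delicate point is keeping the inequality in the definition of $\mathsf{F}_{0,j}$ strict, as otherwise \ref{item:1upper} would not follow directly from property~(v), which provides finite approximants strictly below $\varphi(A)$.
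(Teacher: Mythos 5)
Your proposal is correct and follows essentially the same route as the paper: both invoke Mazur's characterization to get a lscsm $\varphi$ with $\mathsf{I}=\mathrm{Fin}(\varphi)$, use a singleton index set with $\mathsf{F}_{0,j}$ defined by a strict threshold on $\varphi$ (the paper uses $2^j$ where you use $j$, which is immaterial), derive \ref{item:1upper} from lower semicontinuity plus monotonicity, and get \ref{item:2upper} from the dual formulation of right-translation invariance. Your explicit subadditivity argument for \ref{item:3upper} is a slightly more detailed version of the paper's remark that the claim holds because $\mathsf{I}$ is invariant under finite modifications.
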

\begin{proof}
Let $\varphi$ be a lscsm such that $\mathsf{I}=\mathrm{Fin}(\varphi)$ as in \eqref{eq:mazurcharacter}. Set $I=\{0\}$ and define 
$$
\mathsf{F}_{0,j}:=\left\{S\subseteq	 \omega:\varphi(S)>2^j\right\}
$$
for all $j \in \omega$. Then it is immediate to see that 
$$
\mathsf{I}^+=\{S\subseteq \omega: \varphi(S)=\infty\}=\bigcup\nolimits_{i \in I}\bigcap\nolimits_{j \in \omega} \mathsf{F}_{i,j}.
$$
At this point, fix $j \in \omega$ and pick $A \in \mathsf{F}_{0,j}$. Since $\varphi$ is a lscsm, there exists an integer $n \in \omega$ such that $\varphi(A\cap [0,n])>2^j$. By the monotonicity of $\varphi$, it follows that $B \in \mathsf{F}_{0,j}$ whenever $A\cap [0,n]\subseteq B$, which proves \ref{item:1upper}. The property \ref{item:3upper} is vacuously true since $I$ is a singleton and the ideal $\mathsf{I}$ is invariant under finite modifications. 

Similarly, if the ideal $\mathsf{I}$ is, in addition, right-translation invariant, then $S-k \in \mathsf{I}^+$ whenever $S \in \mathsf{I}^+$, which implies \ref{item:2upper}.
\end{proof}

\subsection{Analytic $P$-ideals} 
It is known that the ideals $\mathsf{I}_A$ generated by nonnegative regular matrices as in \eqref{eq:defnonngetaiveregular} are a proper subclass of the family of analytic $P$-ideals, 
see \cite{MR3405547, Filipow18, MR4041540} (of course, if $a_{n,k}=1/n$ for all $n\le k$ and $a_{n,k}=0$ otherwise, then $\mathsf{I}_A=\mathsf{Z}$). 
Other explicit examples of analytic P-ideals are: 
the Fubini products $\emptyset \times \mathrm{Fin}$, which can be defined as
$$
\{S\subseteq \omega: \forall n \in \omega, S \cap I_n \in \mathrm{Fin}\},
$$
where $(I_n)$ is a given partition of $\omega$ into infinite sets, 
Erd{\H o}s--Ulam ideals introduced by Just and Krawczyk in \cite{MR748847}, 
certain ideals used by Louveau and Veli\u{c}kovi\'{c} \cite{Louveau1994}, 
and, more generally, density-like ideals and generalized density ideals \cite{MR3436368, MR4404626}. Additional pathological examples can be found in \cite{MR0593624}. 
It has been suggested in \cite{MR4124855, MR3436368} that the theory of analytic $P$-ideals may have some relevant yet unexploited potential for the study of the geometry of Banach spaces. 

By a classical result of Solecki, see \cite[Theorem 3.1]{MR1708146}, an ideal $\mathsf{I}$ on $\omega$ is an analytic $P$-ideal if and only if there exists a lscsm
$\varphi$ 
such that
\begin{equation}\label{eq:characterizationanalPideal}
\mathsf{I}=\mathrm{Exh}(\varphi):=\{S\subseteq \omega: \|S\|_\varphi=0\} 
\,\,\,\text{ and }\,\,\,
0<\|\omega\|_\varphi \le \varphi(\omega)<\infty,
\end{equation}
where 
$$
\|S\|_\varphi:=\inf\{\,\varphi(S\setminus F): F \in \mathrm{Fin}\}
$$
for all $S\subseteq \omega$. In particular, every analytic $P$-ideal is necessarily $F_{\sigma\delta}$. 
Note that $\|\cdot\|_\varphi$ is a monotone subadditive function which is invariant modulo finite sets. For instance, it is known that $\mathsf{Z}=\mathrm{Exh}(\nu)$, where
\begin{equation}\label{eq:defnuZ}
\nu(S)=\sup\nolimits_{n \in \omega}\frac{|S\cap [2^n,2^{n+1})|}{2^n}
\end{equation}
for all $S\subseteq \omega$, see e.g. \cite[Theorem 1.13.3(a)]{MR1711328} and \cite[Theorem 2]{MR4265483}.

For the next result, we need the following definition:
\begin{defi}\label{defi:stronginvariant}
An analytic $P$-ideal $\mathsf{I}$  on $\omega$ is \emph{strongly-right-translation invariant} if it is generated by a lscsm $\varphi$ as in \eqref{eq:characterizationanalPideal} such that 
there exists $c>0$ such that 
\begin{equation}\label{eq:stronglyright}
\|S-k\|_\varphi \ge c\,\|S\|_\varphi
\end{equation}
for all $S\subseteq \omega$ and all integers $k \in \omega$.
\end{defi}
Note that, in such case, the ideal $\mathsf{I}=\mathrm{Exh}(\varphi)$ would be right-translation invariant. Of course, if $\|\cdot\|_\varphi$ is \emph{translation invariant} (that is, $\|S\|_\varphi=\|S+k\|_\varphi=\|S-k\|_\varphi$ for all $k \in \omega$) 
then $\mathsf{I}=\mathrm{Exh}(\varphi)$ satisfies the above property. This is the case, e.g., of the ideal $\mathsf{Z}$ taking into account the definition of $\nu$ in \eqref{eq:defnuZ}. 

With these premises, we can prove the analogue of Theorem \ref{thm:Fsigmaupper} for analytic $P$-ideals. 
\begin{thm}\label{thm:analyticPidealupper}
Let $\mathsf{I}$ be an analytic $P$-ideal on $\omega$. Then $\mathsf{I}^+$ is a Furstenberg family which admits a representation as in \eqref{eq:upperFurstenberg} satisfying \ref{item:1upper} and \ref{item:3upper}. 

If, in addition, $\mathsf{I}$ is strongly-right-translation invariant then $\mathsf{I}^+$ satisfies also \ref{item:2upper}.
\end{thm}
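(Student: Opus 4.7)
The plan is to mirror the structure of the proof of Theorem \ref{thm:Fsigmaupper}, but using Solecki's characterization \eqref{eq:characterizationanalPideal} in place of Mazur's. The key observation is that, because $\varphi$ is monotone and submeasures are invariant modulo finite sets only in a one-sided way, the infimum defining $\|S\|_\varphi$ is actually a decreasing limit along initial segments: $\|S\|_\varphi=\lim_{n}\varphi(S\setminus [0,n])$. In particular, $S \in \mathsf{I}^+$ if and only if there exists $i \in \omega$ such that $\varphi(S\setminus [0,n])>1/2^i$ for every $n \in \omega$.

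Guided by this, I would set $I=\omega$ and, for each $i,j \in \omega$, define
$$
\mathsf{F}_{i,j}:=\left\{S\subseteq \omega:\varphi(S\setminus [0,j])>1/2^i\right\}.
$$
The discussion above immediately gives the representation $\mathsf{I}^+=\bigcup_{i \in I}\bigcap_{j \in \omega}\mathsf{F}_{i,j}$, establishing \eqref{eq:upperFurstenberg}. For \ref{item:1upper}, given $A \in \mathsf{F}_{i,j}$, property (v) in the definition of lscsm provides a finite set $F_0\subseteq A\setminus [0,j]$ with $\varphi(F_0)>1/2^i$; setting $F:=F_0\cup [0,j]$, any $B\supseteq A\cap F$ satisfies $F_0\subseteq B\setminus [0,j]$ and hence, by monotonicity of $\varphi$, $B \in \mathsf{F}_{i,j}$. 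Property \ref{item:3upper} is immediate from the identity $(A\setminus [0,k])\setminus [0,j]=A\setminus [0,\max(j,k)]$: if $A \in \mathsf{I}^+$ and $i$ is chosen with $\|A\|_\varphi>1/2^i$, then $\varphi(A\setminus [0,\max(j,k)])>1/2^i$ for all $j,k$, giving $A\setminus [0,k]\in \mathsf{F}_{i,j}$.

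Finally, for the strongly-right-translation invariant case, I would exploit the inequality \eqref{eq:stronglyright}. Given $A \in \mathsf{I}^+$, fix $i \in \omega$ such that $c\|A\|_\varphi>1/2^i$; then for every $k \in \omega$ we have $\|A-k\|_\varphi\ge c\|A\|_\varphi>1/2^i$, and since $\varphi((A-k)\setminus [0,j])\ge \|A-k\|_\varphi$ for every $j$, we conclude $A-k \in \mathsf{F}_{i,j}$, establishing \ref{item:2upper}.

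The argument is essentially routine once the right representation is chosen; the only mildly delicate step is \ref{item:1upper}, where one must remember to absorb $[0,j]$ into the witnessing finite set so that restricting to $A\cap F$ still leaves $F_0$ in the complement of $[0,j]$. Compared to the $F_\sigma$ case, no new ideas are needed beyond knowing that the exhaustive seminorm $\|\cdot\|_\varphi$ controls the tail values $\varphi(\,\cdot\setminus [0,n])$ from below.
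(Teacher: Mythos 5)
Your proposal is correct and follows essentially the same route as the paper: the same choice $I=\omega$, the same sets $\mathsf{F}_{i,j}=\{S:\varphi(S\setminus[0,j])>2^{-i}\}$, and the same use of lower semicontinuity plus monotonicity for \ref{item:1upper}, the tail identity for \ref{item:3upper}, and the inequality \eqref{eq:stronglyright} for \ref{item:2upper}. The only cosmetic difference is that you enlarge the witnessing finite set to $F_0\cup[0,j]$, whereas the paper just takes $F_0\subseteq A\setminus[0,j]$ directly (which already suffices since $A\cap F_0=F_0$ is disjoint from $[0,j]$).
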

\begin{proof}
Let $\varphi$ be a lscsm such that $\mathsf{I}=\mathrm{Exh}(\varphi)$ as in \eqref{eq:characterizationanalPideal}. Set $I=\omega$ and define 
\begin{equation}\label{def:Fijdefinition}
\mathsf{F}_{i,j}:=\left\{S\subseteq \omega: \varphi(S\setminus [0,j])> 2^{-i}\right\}
\end{equation}
for all $i,j \in \omega$. Then it is immediate to see that 
$$
\mathsf{I}^+=
\bigcup\nolimits_{i\in \omega}\left\{S\subseteq \omega: \|S\|_\varphi> 2^{-i}\right\}
=\bigcup\nolimits_{i\in \omega }\bigcap\nolimits_{j\in \omega} \mathsf{F}_{i,j}.
$$

At this point, fix $i,j \in \omega$ and pick $A \in \mathsf{F}_{i,j}$. Since $\varphi$ is a lower semicontinuous submeasure and $\|\cdot\|_\varphi$ is invariant modulo finite sets, we have 
$$
\lim_{n\to \infty} \varphi(A\cap (j,n])=\varphi(A \setminus [0,j]) \ge \|A\setminus [0,j]\|_\varphi=\|A\|_\varphi> 2^{-i}.
$$
Hence there exists a finite subset $F\subseteq A\setminus [0,j]$ such that $\varphi(F)>2^{-i}$. By the monotonicity of $\varphi$, it follows that $B \in \mathsf{F}_{i,j}$ whenever $F\subseteq B$, which proves \ref{item:1upper}.

Now, fix a set $A \in \mathsf{I}^+$ and pick an integer $i \in \omega$ such that $\|A\|_\varphi>2^{-i}$. Since $\varphi$ is a lscsm, it follows that 
$
\varphi(A\setminus [0,k]) \ge 
\lim_{n} \varphi(A\setminus [0,n])=
\|A\|_\varphi,
$ for all $k \in \omega$, hence $A\setminus [0,k] \in \mathsf{F}_{i,j}$ for all $k,j \in \omega$. Therefore property \ref{item:3upper} holds.

Lastly, suppose that $\mathsf{I}$ is strongly-right-translation invariant. 
Fix $A \in \mathsf{I}^+$ so that $\|A\|_\varphi>0$. Let $c$ be the witnessing constant which satisfies inequality \eqref{eq:stronglyright} and pick $i \in \omega$ such that $2^{-i}<c\|A\|_\varphi$. It follows that 
$$
\varphi((A-k)\setminus [0,j]) \ge \|A-k\|_\varphi \ge c\|A\|_\varphi > 2^{-i},
$$
i.e., $A-k \in \mathsf{F}_{i,j}$, for all $j,k \in \omega$. This proves \ref{item:2upper} and completes the proof. 
\end{proof}

Upper Furstenberg families are useful as they allow, at first, to prove a version of the Birkhoff's transitivity theorem \cite{MR3847081}. Let us specialize the latter to the case of analytic $P$-ideals, which we report here for later reference:
\begin{prop}\label{prop:birkoff}
Let $(X,T)$ be a dynamical system where $X$ is a Baire second countable space. 
Also, let $\mathsf{I}=\mathrm{Exh}(\varphi)$ be a strongly-right-translation invariant analytic $P$-ideal. 

Then the following are equivalent\textup{:}
\begin{enumerate}[label={\rm (\roman{*})}]
\item $\mathrm{Univ}_T(\mathsf{I})$ is comeager\textup{;}
\item $T$ is $\mathsf{I}$-universal\textup{;}
\item for each nonempty open $U\subseteq X$ there exists $\kappa>0$ such that for all nonempty open sets $V \subseteq X$ there exists $y \in V$ for which 
$
\|N(y,U)\|_\varphi \ge \kappa.
$
\end{enumerate}
\end{prop}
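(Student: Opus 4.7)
The plan is to prove the chain (i) $\Rightarrow$ (ii) $\Rightarrow$ (iii) $\Rightarrow$ (i). The first implication is immediate: in a nonempty Baire space a comeager set is dense, hence nonempty.

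For (ii) $\Rightarrow$ (iii) I would fix a nonempty open $U\subseteq X$ and an $\mathsf{I}$-universal point $x$, and set $\kappa:=c\,\|N(x,U)\|_\varphi>0$, where $c$ is the constant from Definition \ref{defi:stronginvariant}. For any nonempty open $V$, density of the orbit of $x$ provides some $m\in\omega$ with $T^mx\in V$; since $N(T^mx,U)=N(x,U)-m$, the strongly-right-translation invariance forces
\[
\|N(T^mx,U)\|_\varphi\ge c\,\|N(x,U)\|_\varphi=\kappa.
\]
This is the only place where the strongly-right-translation invariance enters the argument.

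The substance of the proof is (iii) $\Rightarrow$ (i). I would fix a countable base $\{U_k:k\in\omega\}$ for $X$ and write $\mathrm{Univ}_T(\mathsf{I})=\bigcap_k\{x: \|N(x,U_k)\|_\varphi>0\}$. For each $k$, (iii) supplies a constant $\kappa_k>0$; choose $i_k\in\omega$ with $2^{-i_k}<\kappa_k$, and for each $j\in\omega$ define
\[
B_{k,j}:=\{x\in X:\varphi(N(x,U_k)\setminus [0,j])>2^{-i_k}\}.
\]
Repeating the argument of Theorem \ref{thm:analyticPidealupper}, the lower semicontinuity of $\varphi$ rewrites $B_{k,j}$ as the union of $\bigcap_{n\in F}T^{-n}(U_k)$ over finite $F\subseteq\omega\setminus[0,j]$ with $\varphi(F)>2^{-i_k}$, so $B_{k,j}$ is open. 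Density follows directly from (iii): for any nonempty open $V$, take $y\in V$ with $\|N(y,U_k)\|_\varphi\ge\kappa_k$; then $\varphi(N(y,U_k)\setminus[0,j])\ge\|N(y,U_k)\|_\varphi>2^{-i_k}$, so $y\in B_{k,j}\cap V$. By Baire, $\bigcap_{k,j}B_{k,j}$ is a dense $G_\delta$, and since $\|S\|_\varphi=\lim_j\varphi(S\setminus[0,j])$, any $x$ in this intersection satisfies $\|N(x,U_k)\|_\varphi\ge 2^{-i_k}>0$ for every $k$, so $x\in\mathrm{Univ}_T(\mathsf{I})$.

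The main obstacle is verifying openness of $B_{k,j}$; that step is the only nontrivial topological input. It hinges on reducing the $\varphi$-inequality on $N(x,U_k)\setminus[0,j]$ to one witnessed by a finite subset via the lscsm property, so the defining condition reduces to a union of conditions of the form \textquotedblleft $T^nx\in U_k$ for all $n\in F$\textquotedblright, which are manifestly open.
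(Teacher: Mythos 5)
Your argument is correct. The paper itself disposes of this proposition in one line, by citing Theorem \ref{thm:analyticPidealupper} (which exhibits $\mathsf{I}^+$ as an upper Furstenberg family via the sets $\mathsf{F}_{i,j}=\{S:\varphi(S\setminus[0,j])>2^{-i}\}$) together with the Birkhoff-type transitivity theorem of Bonilla and Grosse-Erdmann for such families; what you have written is, in effect, a self-contained unpacking of that machinery in this special case. Your sets $B_{k,j}$ are exactly the preimages $\{x:N(x,U_k)\in\mathsf{F}_{i_k,j}\}$ of the families from \eqref{def:Fijdefinition}, your openness argument via finite witnesses $F$ with $\varphi(F)>2^{-i_k}$ reproduces the proof of property \ref{item:1upper} there, and your use of the constant $c$ in (ii) $\Rightarrow$ (iii) along a dense orbit plays the role of property \ref{item:2upper}. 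The trade-off is the usual one: the paper's route is shorter and places the result inside a general framework reusable elsewhere (e.g.\ in Theorem \ref{thm:strenghten}), while your direct Baire-category proof makes transparent exactly which hypotheses are used where --- in particular, that strong right-translation invariance enters only in (ii) $\Rightarrow$ (iii), and that (iii) $\Rightarrow$ (i) needs nothing beyond lower semicontinuity of $\varphi$ and the Baire property of $X$. Both proofs are sound; yours adds no gap.
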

\begin{proof}
It follows by 
Theorem \ref{thm:analyticPidealupper}, and \cite[Theorem 3.1 and Remark 3.2]{MR3847081}. 
\end{proof}


We provide below another example of an upper Furstenberg family. 
For, recall that $S\subseteq \omega$ is said to be \emph{syndetic} if it has bounded gaps, that is, there exists $k \ge 1$ such that $S\cap [n,n+k]\neq \emptyset$ for all $n \in \omega$. Observe that $S$ is syndetic if and only if it has positive lower Banach density, that is, $$\mathsf{bd}_\star(S):=1-\mathsf{bd}^\star(\omega\setminus S)>0.$$
\begin{example}\label{example:syndetic}
Define the family 
$$
\mathsf{S}:=\left\{A\subseteq \omega: A \text{ is syndetic}\right\}
$$ 
Note that $\mathsf{S}$ is \emph{not} the complement of an ideal on $\omega$: indeed, both $A:=\bigcup_n [2^{2n},2^{2n+1}] \cap \omega$ and $\omega\setminus A$ do not belong to $\mathsf{S}$, while their union is $\omega$. Considering that $S$ is syndetic if and only if $\mathsf{bd}_\star(S)>0$, it follows that we can represent $\mathsf{S}$ as $\bigcup\nolimits_{i,m} \bigcap\nolimits_{n,t} \mathsf{S}_{i,m,n,t}$, where 
$$
\mathsf{S}_{i,m,n,t}:=\left\{A\subseteq \omega: |A\cap [t+m,t+m+n]| \ge 2^{-i}n\right\}
$$
for all $i,m,n,t \in \omega$. 

Of course, if $A \in \mathsf{S}_{i,m,n,t}$ and $F=[t+m,t+m+n]$ then $B \in \mathsf{S}_{i,m,n,t}$ whenever $A\cap F\subseteq B$, which implies \ref{item:1upper}. 
Now, fix $A \in \mathsf{S}$ and $k \in \omega$. Hence, by the representation of $\mathsf{S}$, there exist $i_0,m_0 \in \omega$ such that $|A\cap [t+m_0,t+m_0+n]| \ge 2^{-i_0}n$ for all $n,t \in \omega$. Without loss of generality, we can assume that $m_0 \ge k$. Considering that 
$$
(A-k) \cap [t+m_0,t+m_0+n]=\left(A \cap [k+t+m_0
,k+t+m_0+n]\right)-k,
$$
we obtain that $A-k \in \mathsf{S}_{i_0,m_0,n,t}$ for all $n,t \in \omega$, which implies \ref{item:2upper}. Therefore $\mathsf{S}$ is an upper Furstenberg family. However, the above representation of $\mathsf{S}$ does not satisfy \ref{item:3upper} (we omit details). 
\end{example}

Hereafter, following e.g. \cite{MR4489276}, given a dynamical system $(X,T)$, we say that a point $x \in X$ is \emph{uniformly recurrent} if $N(x,U) \in \mathsf{S}$ for all open neighborhoods $U\subseteq X$ of $x$.

%

\section{Basic Structural and Topological Properties}\label{sec:topologicalproperties}\label{sec:structuralbasics}

We start by proving some basic properties which are mostly folklore, and whose short proofs are included for the sake of completeness, see e.g. \cite{MR3847081, MR1685272}. 




\begin{lem}\label{lem:easylem}
Let $\mathsf{I}$ be an ideal on $\omega$ and $\mathcal{T}=(T_n: n \in \omega)$ be a sequence of continuous selfmaps on a topological space $X$. 
Fix also $k \in \omega$ such that $T_nT_k=T_kT_n$ for all $n \in \omega$ and $T_k$ has dense range. 
Then 
$T_kx$ is $\mathsf{I}$-universal whenever $x$ is $\mathsf{I}$-universal.
\end{lem}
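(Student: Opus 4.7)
The plan is a short direct calculation, using commutativity to move $T_k$ across $T_n$ and the density hypothesis to pull back open sets.

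First I would unpack what needs to be shown: that $T_k x$ is $\mathsf{I}$-universal means $N_{\mathcal{T}}(T_k x, V) \in \mathsf{I}^+$ for every nonempty open set $V \subseteq X$. So fix an arbitrary nonempty open $V$.

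The key identity is the chain
\[
N_{\mathcal{T}}(T_k x, V) = \{n \in \omega : T_n T_k x \in V\} = \{n \in \omega : T_k T_n x \in V\} = N_{\mathcal{T}}(x, T_k^{-1}(V)),
\]
where the second equality uses the hypothesis $T_n T_k = T_k T_n$ for all $n \in \omega$. The set $T_k^{-1}(V)$ is open by continuity of $T_k$, and it is nonempty because $T_k$ has dense range (so $T_k(X) \cap V \neq \emptyset$). Since $x \in \mathrm{Univ}_{\mathcal{T}}(\mathsf{I})$ and $T_k^{-1}(V)$ is a nonempty open subset of $X$, the return set $N_{\mathcal{T}}(x, T_k^{-1}(V))$ lies in $\mathsf{I}^+$, whence so does $N_{\mathcal{T}}(T_k x, V)$. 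As $V$ was arbitrary, this yields $\Gamma_{\mathcal{T}(T_k x)}(\mathsf{I}) = X$.

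There is no real obstacle; the only two moving parts are the commutation (to interchange the roles of $T_n$ and $T_k$ on the orbit of $x$) and the density of range (to guarantee the pulled-back open set is nonempty so that $\mathsf{I}$-universality of $x$ can be applied). Note that the argument makes no use of the structure of $\mathsf{I}$ beyond the fact that it is an ideal, so the same one-line computation simultaneously handles the analogous \emph{strong} universality statement by replacing $\Gamma$ with $\Lambda$.
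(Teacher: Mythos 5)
Your proof is correct and follows essentially the same route as the paper: pull back the open set via $T_k^{-1}$, use dense range for nonemptiness, and use commutativity to identify $N_{\mathcal{T}}(T_kx,V)$ with $N_{\mathcal{T}}(x,T_k^{-1}(V))$. No issues.
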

\begin{proof}
Pick an $\mathsf{I}$-universal point $x$ 
and a nonempty open set $U\subseteq X$. Then $V:=T_k^{-1}U$ is a nonempty open set and 
$$
N(T_kx,U)=\{n \in \omega: T_kT_nx\in U\}=N(x,V) \in \mathsf{I}^+. 
$$
Therefore $T_kx \in \mathrm{Univ}_{\mathcal{T}}(\mathsf{I})$.
\end{proof}

\begin{prop}\label{prop:denseIuniversal}
Let $(X,T)$ by a dynamical system and $\mathsf{I}$ be an ideal on $\omega$. 
Then $\mathrm{Univ}_{T}(\mathsf{I})$ is either empty or dense. 
\end{prop}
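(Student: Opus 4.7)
The plan is to show that if $\mathrm{Univ}_T(\mathsf{I})$ is nonempty then it contains the (dense) forward orbit of any one of its points, which forces it to be dense. This is essentially the standard argument for hypercyclic vectors, adapted to the ideal setting by invoking Lemma \ref{lem:easylem}.

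First, I would pick $x \in \mathrm{Univ}_T(\mathsf{I})$, assuming $\mathrm{Univ}_T(\mathsf{I}) \neq \emptyset$. The key observation is that, for every nonempty open $U \subseteq X$, the return set $N(x,U) = \{n \in \omega : T^n x \in U\}$ lies in $\mathsf{I}^+$, and since $\mathrm{Fin} \subseteq \mathsf{I}$ every $\mathsf{I}$-positive set is infinite. In particular, for every $k \in \omega$ the tail $\{T^n x : n \geq k\}$ meets every nonempty open set in $X$, so it is dense; equivalently, the range of $T^k$ is dense.

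Next, I would apply Lemma \ref{lem:easylem} to the sequence $\mathcal{T} = (T^n : n \in \omega)$: for each fixed $k \in \omega$, the maps $T^n$ and $T^k$ commute (they are iterates of $T$) and $T^k$ has dense range by the previous step. The lemma then yields that $T^k x \in \mathrm{Univ}_T(\mathsf{I})$ for every $k \in \omega$. Hence the entire forward orbit $\mathrm{orb}(x,T) = \{T^k x : k \in \omega\}$ is contained in $\mathrm{Univ}_T(\mathsf{I})$, and since this orbit is dense in $X$, so is $\mathrm{Univ}_T(\mathsf{I})$.

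There is no real obstacle here; the only subtle point is ensuring that the dense-range hypothesis of Lemma \ref{lem:easylem} is met, which is immediate from the fact that $\mathsf{I}^+$-sets are infinite (and so tails of the orbit remain dense). This step is exactly where the hypothesis $\mathrm{Fin} \subseteq \mathsf{I}$ built into the definition of an ideal is used.
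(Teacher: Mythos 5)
Your proof is correct and follows essentially the same route as the paper: both arguments observe that the (tails of the) orbit of an $\mathsf{I}$-universal point are dense, so that the iterates of $T$ have dense range, and then invoke Lemma \ref{lem:easylem} to conclude that the whole forward orbit lies in $\mathrm{Univ}_T(\mathsf{I})$. The only cosmetic difference is that you apply the lemma directly to each $T^k$, whereas the paper applies it to $T$ and iterates.
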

\begin{proof}
Suppose $\mathrm{Univ}_{T}(\mathsf{I})$ is nonempty and pick a $\mathsf{I}$-universal point $x$. Since the image of its orbit is dense, then $T$ has dense image and, thanks to Lemma \ref{lem:easylem}, $Tx \in \mathrm{Univ}_{T}(\mathsf{I})$. Iterating this reasoning, we obtain $\{T^nx: n \in \omega\} \subseteq \mathrm{Univ}_{T}(\mathsf{I})$, completing the proof. 
\end{proof}

It is worth noting that the analogue of Proposition \ref{prop:denseIuniversal} does not hold for $\mathsf{I}$-recurrence. 
Indeed, let $\mathsf{I}$ be an ideal on $\omega$ and let $T$ be the bounded linear operator operator on $\ell_2$ defined by $x \mapsto (x_0,0,0,\ldots)$. Then $(1,0,0,\ldots)$ is a nonzero fixed point of $T$ (and, hence, it is an $\mathsf{I}$-recurrent point), but $T$ is clearly not $\mathsf{I}$-recurrent. 
Therefore $\mathrm{Rec}_T(\mathsf{I})$ is neither nonempty nor dense. 

We proceed to show that, in the case of right-translation invariant ideals $\mathsf{I}$, the analogue 
of Proposition \ref{prop:denseIuniversal} 
holds for $\mathsf{I}$-strong universal points. 
\begin{prop}
Let $(X,T)$ by a dynamical system and $\mathsf{I}$ be a right-translation invariant ideal on $\omega$. 
Then $\mathrm{sUniv}_{T}(\mathsf{I})$ is either empty or dense. 
\end{prop}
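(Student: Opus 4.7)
The plan is to mirror the proof of Proposition \ref{prop:denseIuniversal}: assuming $\mathrm{sUniv}_T(\mathsf{I})$ is nonempty, pick $x$ in it, show that $Tx$ is also $\mathsf{I}$-strong universal, iterate to conclude that the whole orbit $\{T^n x : n \in \omega\}$ is contained in $\mathrm{sUniv}_T(\mathsf{I})$, and recall that this orbit is dense because every $\mathsf{I}$-strong universal point is in particular $\mathrm{Fin}$-universal. The new ingredient compared with the universality case is the right-translation invariance of $\mathsf{I}$, which I will use to absorb the index shift produced when replacing $x$ with $Tx$.

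The core step is to show $Tx \in \mathrm{sUniv}_T(\mathsf{I})$. Fix $y \in X$; since $x$ is $\mathsf{I}$-strong universal, there exists a subsequence of indices $A \in \mathsf{I}^+$ such that the increasing enumeration of $(T^m x)_{m \in A}$ converges to $y$. I then consider the shifted index set $A-1 = \{a-1 : a \in A,\ a \geq 1\}$. As remarked after the definition of right-translation invariance in the excerpt, one has $S - k \in \mathsf{I}^+$ whenever $S \in \mathsf{I}^+$ and $k \in \omega$, so $A - 1 \in \mathsf{I}^+$. For every $n \in A-1$ one has $T^n(Tx) = T^{n+1}x$, and as $n$ ranges over the increasing enumeration of $A-1$, the values $T^{n+1}x$ are exactly the tail of $(T^m x)_{m \in A}$ obtained by dropping $m = 0$ if $0 \in A$; this tail still converges to $y$. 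Hence $y \in \Lambda_{(T^n(Tx))}(\mathsf{I})$, and since $y$ was arbitrary $Tx \in \mathrm{sUniv}_T(\mathsf{I})$.

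Iterating this argument yields $T^n x \in \mathrm{sUniv}_T(\mathsf{I})$ for every $n \in \omega$. Since $\mathsf{I}$-strong universality implies $\mathrm{Fin}$-universality (as $\mathrm{Fin} \subseteq \mathsf{I}$), the orbit $\{T^n x : n \in \omega\}$ is dense in $X$, hence so is $\mathrm{sUniv}_T(\mathsf{I})$.

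No serious obstacle is expected: the whole argument is a direct adaptation of Proposition \ref{prop:denseIuniversal}, and the only subtle point is the bookkeeping for the index shift, which is handled precisely by right-translation invariance of $\mathsf{I}$. I would just take care to state clearly that $A - 1 \in \mathsf{I}^+$ follows from $A \in \mathsf{I}^+$ via the hypothesis on $\mathsf{I}$, and to note that dropping at most one term from a convergent subsequence does not affect the limit.
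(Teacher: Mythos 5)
Your proof is correct and follows essentially the same route as the paper: the paper picks $S\in\mathsf{I}^+$ with $\lim_{n\in S}T^nx=\eta$ and observes directly that $S-k\in\mathsf{I}^+$ and $\lim_{n\in S-k}T^n(T^kx)=\eta$ for every $k$, which is exactly your shift argument (done for general $k$ at once rather than by iterating the case $k=1$). The bookkeeping you flag about dropping finitely many terms is handled identically in both versions.
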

\begin{proof}
Suppose that $\mathrm{sUniv}_{T}(\mathsf{I})$ is nonempty, pick an $\mathsf{I}$-strong universal point $x$ (so that $\{T^nx: n \in \omega\}$ is dense), and fix an arbitrary $\eta \in X$. Then there exists $S \in \mathsf{I}^+$ such that $\lim_{n \in S}T^nx=\eta$. For each $k \in \omega$, it follows that $S-k \in \mathsf{I}^+$ and $\lim_{n \in S-k}T^n(T^kx)=\eta$. Therefore $\{T^kx: k \in \omega\}\subseteq \mathrm{sUniv}_{T}(\mathsf{I})$.
\end{proof}

We study now the topological complexity and the topological largeness of the set of $\mathsf{I}$-universal and $\mathsf{I}$-recurrent points (and their strong variants). 


\begin{thm}\label{thm:clusterpointFsigmaideal}
Let $\mathsf{I}$ be a $F_\sigma$-ideal on $\omega$ and $\mathcal{T}=(T_n: n \in \omega)$ be a sequence of continuous maps $T_n: X\to Y$, where $X$ and $Y$ are topological spaces. Then\textup{:} 
%
\begin{enumerate}[label={\rm (\roman{*})}]
\item If $Y$ is second countable, then $\mathrm{Univ}_{\mathcal{T}}(\mathsf{I})$ is a $G_\delta$-set\textup{;} 
\item If $X$ is first countable, then $\mathrm{Univ}_{\mathcal{T}}(\mathsf{I})=\mathrm{sUniv}_{\mathcal{T}}(\mathsf{I})$\textup{;}
\item \label{item:3recgdelta} If $X$ is metrizable and $X=Y$, then $\mathrm{Rec}_{\mathcal{T}}(\mathsf{I})$ is a $G_\delta$-set and coincides with $\mathrm{sRec}_{\mathcal{T}}(\mathsf{I})$\textup{.}
\end{enumerate}
\end{thm}
\begin{proof}
Let $\varphi$ be a lscsm such that $\mathsf{I}=\mathrm{Fin}(\varphi)$ as in \eqref{eq:mazurcharacter}.  
Also, let $(U_k)$ be a countable base of nonempty open sets of $Y$. Then 
\begin{displaymath}
\begin{split}
\mathrm{Univ}_{\mathcal{T}}(\mathsf{I})
&=\bigcap_{k\in \omega}\bigcap_{t \in \omega}\left\{x\in X: \varphi(\{n \in \omega: T_nx \in U_k\})> 2^{t}\right\}\\
&=\bigcap_{k\in \omega}\bigcap_{t \in \omega}\bigcup_{a \in \omega} 
\left\{x\in X: \varphi(\{n \le a: T_nx \in U_k\})> 2^{t}\right\}.
\end{split}
\end{displaymath}
Since the inner sets are open, it follows that $\mathrm{Univ}_{\mathcal{T}}(\mathsf{I})$ is a $G_{\delta}$-set. 

The second part follows by the fact that if $\bm{x}$ is a sequence taking values in a first countable space $X$ and $\mathsf{I}$ is a $F_\sigma$-ideal then every $\mathsf{I}$-cluster point of $\bm{x}$ is an $\mathsf{I}$-limit point (and viceversa), so that $\Gamma_{\bm{x}}(\mathsf{I})=\Lambda_{\bm{x}}(\mathsf{I})$, see \cite[Theorem 2.3]{MR3883171}; cf. also \cite[Proposition 3.6]{FKL} for a combinatorial characterization of ideals $\mathsf{I}$ satisfying the latter property. 

For the third part, let $d$ be a compatible metric on $X$, and for each $x \in X$, let $U_k$ be the open ball with center $x$ and radius $2^{-k}$. Then the proof goes verbatim as above by noting that 
$$
\left\{x\in X: \varphi(\{n \le a: d(T_nx,x)<2^{-k}\})> 2^{t}\right\}
$$
is open for all $a,t,k \in \omega$ (we omit further details). 
\end{proof}

Another proof of the first claim above essentially follows by \cite[Theorem 3.1, Remark 3.2, Corollary 3.3]{MR3847081} and Theorem \ref{thm:analyticPidealupper}. 

Thanks to Proposition \ref{prop:denseIuniversal} and Theorem \ref{thm:clusterpointFsigmaideal}, we obtain: 
\begin{cor}\label{cor:Gsigmaideal}
Let $(X,T)$ be a dynamical system, where $X$ is second countable, and let $\mathsf{I}$ be a $F_\sigma$-ideal. Then the following hold\textup{:}
\begin{enumerate}[label={\rm (\roman{*})}]
\item $T$ is $\mathsf{I}$-universal if and only it is $\mathsf{I}$-strong universal\textup{;}
\item $T$ is $\mathsf{I}$-recurrent if and only if it is $\mathsf{I}$-strong recurrent\textup{;}
\item $\mathrm{Univ}_{T}(\mathsf{I})$ is either empty or a dense $G_\delta$-set\textup{.}
\end{enumerate}
\end{cor}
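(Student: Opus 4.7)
The plan is straightforward: all four assertions are obtained by juxtaposing Proposition~\ref{prop:denseIuniversal} with Theorem~\ref{thm:clusterpointFsigmaideal}, once one records the trivial observation that a second countable space is first countable, so that parts (i), (ii) and (iii) of the theorem all apply simultaneously with $X=Y$.

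First I would unpack the two equivalences via the tautological reformulations of the definitions. Since $T$ is $\mathsf{I}$-universal (respectively, $\mathsf{I}$-strong universal) precisely when $\mathrm{Univ}_T(\mathsf{I})$ (respectively, $\mathrm{sUniv}_T(\mathsf{I})$) is nonempty, Theorem~\ref{thm:clusterpointFsigmaideal}(ii) forces the equivalence in one line. Analogously, $T$ is $\mathsf{I}$-recurrent (respectively, $\mathsf{I}$-strong recurrent) by definition when $\mathrm{Rec}_T(\mathsf{I})$ (respectively, $\mathrm{sRec}_T(\mathsf{I})$) is dense in $X$, and Theorem~\ref{thm:clusterpointFsigmaideal}(iii) identifies these two sets, yielding the second equivalence.

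For the final clause, Theorem~\ref{thm:clusterpointFsigmaideal}(i) (with $Y=X$ second countable) and Theorem~\ref{thm:clusterpointFsigmaideal}(iii) already show that both $\mathrm{Univ}_T(\mathsf{I})$ and $\mathrm{Rec}_T(\mathsf{I})$ are $G_\delta$-subsets of $X$. Proposition~\ref{prop:denseIuniversal} then supplies the empty-or-dense dichotomy for $\mathrm{Univ}_T(\mathsf{I})$, and for $\mathrm{Rec}_T(\mathsf{I})$ the density side of the dichotomy is precisely the definition of $\mathsf{I}$-recurrence — that is, when the equivalent property of the second statement holds, $\mathrm{Rec}_T(\mathsf{I})$ is a dense $G_\delta$.

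I do not expect any genuine obstacle; the corollary is a packaging result. The only mild subtlety is parsing the phrase \emph{in both cases} in the conclusion: it should be read as asserting unconditional empty-or-denseness for the universal set and density for the recurrent set precisely under the assumption of (strong) $\mathsf{I}$-recurrence established immediately before.
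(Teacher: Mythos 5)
Your proposal is correct and follows exactly the paper's route: the corollary is derived there, without further argument, by combining Proposition~\ref{prop:denseIuniversal} with Theorem~\ref{thm:clusterpointFsigmaideal}, using that second countability implies first countability so that all three parts of the theorem apply with $X=Y$. Your reading of the final clause (unconditional $G_\delta$-ness and empty-or-dense for $\mathrm{Univ}_T(\mathsf{I})$, but density of $\mathrm{Rec}_T(\mathsf{I})$ only under the recurrence hypothesis) is also the right one, since the paper itself notes that the empty-or-dense dichotomy can fail for recurrence in general.
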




Let us proceed to the analogue of Theorem \ref{thm:clusterpointFsigmaideal} for analytic $P$-ideals:
\begin{thm}\label{thm:clusterpointanalyticPideal} 
Let $\mathsf{I}$ be an analytic P-ideal on $\omega$ and $\mathcal{T}=(T_n: n \in \omega)$ be a sequence of continuous maps $T_n: X\to Y$, where $X$ and $Y$ are topological spaces. Then\text{:}
\begin{enumerate}[label={\rm (\roman{*})}]
\item If $Y$ is second countable, then $\mathrm{Univ}_{\mathcal{T}}(\mathsf{I})$ is a $G_{\delta\sigma\delta}$-set\textup{;}
\item If $X$ is metrizable and $X=Y$, then $\mathrm{Rec}_{\mathcal{T}}(\mathsf{I})$ is a $G_{\delta\sigma\delta}$-set\textup{.}
\end{enumerate}
%
%
\end{thm}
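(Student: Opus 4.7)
The plan is to adapt the argument of Theorem \ref{thm:clusterpointFsigmaideal}, exploiting Solecki's characterization recalled in \eqref{eq:characterizationanalPideal}. Writing $\mathsf{I}=\mathrm{Exh}(\varphi)$ for a lower semicontinuous submeasure $\varphi$, the crucial identity is that, for every $S\subseteq \omega$, $\|S\|_\varphi$ is the decreasing limit of $\varphi(S\setminus [0,j])$, so
$$
S \in \mathsf{I}^+ \,\iff\, \|S\|_\varphi>0 \,\iff\, \exists i \in \omega,\ \forall j \in \omega,\ \varphi(S\setminus [0,j])>2^{-i}.
$$
This introduces exactly one additional quantifier alternation compared with the $F_\sigma$-case (where $S \in \mathsf{I}^+ \Leftrightarrow \forall t\ \varphi(S)>2^t$), which will be responsible for the jump from $G_\delta$ to $G_{\delta\sigma\delta}$ in the final complexity.

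For (i), fix a countable base $(U_k)_k$ of open sets of $Y$. Translating the above quantifier decomposition, one obtains
$$
\mathrm{Univ}_{\mathcal{T}}(\mathsf{I})
=\bigcap_{k\in\omega}\bigcup_{i\in\omega}\bigcap_{j\in\omega}
\bigl\{x\in X:\varphi\bigl(N_{\mathcal{T}}(x,U_k)\setminus [0,j]\bigr)>2^{-i}\bigr\}.
$$
By the lower semicontinuity of $\varphi$, $\varphi(A\setminus [0,j])>2^{-i}$ holds iff some finite subset $B\subseteq A\setminus [0,j]$ satisfies $\varphi(B)>2^{-i}$. Applied to $A=N_{\mathcal{T}}(x,U_k)$, this rewrites the innermost braced set as
$$
\bigcup\nolimits_{B}\bigcap\nolimits_{n\in B}T_n^{-1}(U_k),
$$
where $B$ ranges over the (countably many) finite subsets of $\omega\setminus [0,j]$ with $\varphi(B)>2^{-i}$. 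Each inner intersection is open by continuity of the $T_n$, hence the whole set is open. Counting the alternations $\bigcap_k\bigcup_i\bigcap_j(\text{open})$ gives precisely $G_{\delta\sigma\delta}$.

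For (ii), use the first countability of $X$ to fix, for each $x\in X$, a decreasing local base $(V_{x,k})_k$ of neighborhoods of $x$. Since $N_{\mathcal{T}}(x,U)\supseteq N_{\mathcal{T}}(x,V_{x,k})$ for any open $U\ni x$ containing $V_{x,k}$, the condition of $\mathsf{I}$-recurrence becomes $N_{\mathcal{T}}(x,V_{x,k}) \in \mathsf{I}^+$ for every $k$, and the same expansion yields
$$
\mathrm{Rec}_{\mathcal{T}}(\mathsf{I})
=\bigcap_{k\in\omega}\bigcup_{i\in\omega}\bigcap_{j\in\omega}
\bigl\{x\in X:\varphi\bigl(N_{\mathcal{T}}(x,V_{x,k})\setminus [0,j]\bigr)>2^{-i}\bigr\}.
$$
The verification that each innermost set is open goes through exactly as in the proof of Theorem \ref{thm:clusterpointFsigmaideal}(iii), which already handles the replacement of a fixed open base by an $x$-dependent one. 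The quantifier count again delivers $G_{\delta\sigma\delta}$. The only genuine obstacle is this last point, namely controlling the $x$-dependence of $V_{x,k}$ when reducing $\{x:\varphi(B)>2^{-i}\text{ and }B\subseteq N_{\mathcal{T}}(x,V_{x,k})\}$ to an open set; but this is precisely the technicality already resolved in the $F_\sigma$-case, which is imported verbatim here.
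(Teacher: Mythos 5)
Your proof is correct and follows essentially the same route as the paper's: write $\mathsf{I}=\mathrm{Exh}(\varphi)$, unfold $\|N_{\mathcal T}(x,U_k)\|_\varphi>0$ as $\bigcup_i\bigcap_j$ of a condition made open by the lower semicontinuity of $\varphi$ (your union over finite sets $B$ with $\varphi(B)>2^{-i}$ is the same device as the paper's union over intervals $[a,b]$), and count the quantifier alternations. The treatment of part (ii) by substituting a decreasing local base at each $x$ is also exactly the paper's move.
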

\begin{proof}
Let $\varphi$ be a lscsm such that $\mathsf{I}=\mathrm{Exh}(\varphi)$ as in \eqref{eq:characterizationanalPideal}. 
Also, let $(U_k)$ be a countable base of nonempty open sets of $Y$. Then 
\begin{displaymath}
\begin{split}
\mathrm{Univ}_{\mathcal{T}}(\mathsf{I})
&=\bigcap_{k\in \omega}\bigcup_{t \in \omega}\left\{x\in X: \left\|\{n \in \omega: T_nx \in U_k\}\right\|_\varphi>2^{-t}\right\}\\
&=\bigcap_{k\in \omega}\bigcup_{t \in \omega}\bigcap_{a \in \omega} 
\left\{x\in X:\varphi(\{n \in [a,b]: T_nx \in U_k\})>2^{-t} \text{ for some }b\ge a\right\}.
\end{split}
\end{displaymath}
Since the inner sets are open, it follows that $\mathrm{Univ}_{\mathcal{T}}(\mathsf{I})$ is a $G_{\delta\sigma\delta}$-set. 

The second part goes verbatim, as in the proof of Theorem \ref{thm:clusterpointFsigmaideal}\ref{item:3recgdelta}.
\end{proof}

With the same reasoning of Corollary \ref{cor:Gsigmaideal}, we conclude that the set of $\mathsf{I}$-universal points is either empty or a dense $G_{\delta\sigma\delta}$-set. 

In the case where $\mathsf{I}$ is also strongly-right-translation invariant, we can show that it contains a dense $G_\delta$-set, whenever it is nonempty. 
This generalizes the known result that the set of $\mathsf{Z}$-universal vectors of linear dynamical systems, where $X$ is a Banach space, is either empty or comeager, see e.g. \cite[Proposition 21]{MR3334899}. 
\begin{thm}\label{thm:univTemptycomeager}
Let $(X,T)$ be a dynamical system, where $X$ is second countable, and let $\mathsf{I}$ be a strongly-right-translation invariant analytic P-ideal on $\omega$. 

Then $\mathrm{Univ}_{T}(\mathsf{I})$ is either empty or contains a dense $G_\delta$-set. 
%
\end{thm}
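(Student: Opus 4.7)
The plan is to realize a dense $G_\delta$ subset of $\mathrm{Univ}_T(\mathsf{I})$ explicitly, by using a single universal point to pin down a uniform submeasure threshold which, thanks to strong-right-translation invariance, is achieved all along its orbit.  More precisely, I would suppose $\mathrm{Univ}_T(\mathsf{I}) \neq \emptyset$, fix $x_0 \in \mathrm{Univ}_T(\mathsf{I})$, and, by Solecki's theorem (as in \eqref{eq:characterizationanalPideal}), write $\mathsf{I} = \mathrm{Exh}(\varphi)$ with the lscsm $\varphi$ chosen so that $\|S-k\|_\varphi \geq c \|S\|_\varphi$ for some constant $c > 0$ and all $S\subseteq \omega$, $k \in \omega$. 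Let $(U_k)_{k \in \omega}$ be a countable base of nonempty open sets in $X$; for each $k$, since $N(x_0, U_k) \in \mathsf{I}^+$, we have $\|N(x_0, U_k)\|_\varphi > 0$, so one may pick $t_k \in \omega$ so large that $2^{-t_k} < c \|N(x_0, U_k)\|_\varphi$, and set
$$
A_k := \left\{x \in X : \|N(x, U_k)\|_\varphi > 2^{-t_k}\right\}.
$$

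The first step is to check that each $A_k$ is a $G_\delta$-set. Here I would repeat the expansion used in the proof of Theorem \ref{thm:clusterpointanalyticPideal} at the fixed level $t_k$, writing
$$
A_k = \bigcap_{a \in \omega} \bigcup_{b \geq a} \left\{x \in X : \varphi(\{n \in [a,b] : T^n x \in U_k\}) > 2^{-t_k}\right\},
$$
and verifying that the inner sets are open: for fixed $a,b$, the finite subset $\{n \in [a,b]: T^n x \in U_k\}$ depends lower-semi-continuously on $x$ in the inclusion order (by continuity of the finitely many $T^n$ and openness of $U_k$), and $\varphi$ is monotone, so the sublevel condition $\varphi(\cdot) > 2^{-t_k}$ is preserved on a neighborhood.

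The second step, which is the heart of the argument, is the density of $A_k$ and is exactly where strong-right-translation invariance is used. For every $n \in \omega$ one has $N(T^n x_0, U_k) = N(x_0, U_k) - n$, and therefore
$$
\|N(T^n x_0, U_k)\|_\varphi = \|N(x_0, U_k) - n\|_\varphi \geq c \|N(x_0, U_k)\|_\varphi > 2^{-t_k},
$$
so $T^n x_0 \in A_k$ for all $n$. Since $x_0$ is $\mathsf{I}$-universal its orbit is dense in $X$, hence so is $A_k$.

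Finally, $\bigcap_k A_k \subseteq \mathrm{Univ}_T(\mathsf{I})$: for any nonempty open $U \subseteq X$ one may choose a basic $U_k \subseteq U$, so that $N(x, U) \supseteq N(x, U_k) \in \mathsf{I}^+$ for every $x$ in the intersection. By the Baire category theorem, the countable intersection of the dense $G_\delta$-sets $A_k$ is itself a dense $G_\delta$ (as in Proposition \ref{prop:birkoff}, one tacitly works in a Baire second countable ambient). The main obstacle I expect is precisely the density step: without the uniform constant $c$ furnished by strong-right-translation invariance, a single universal $x_0$ would not be enough to keep all orbit shifts above a common threshold $2^{-t_k}$, and the individual $A_k$ could fail to be dense; thus the hypothesis on $\mathsf{I}$ is doing essential work at exactly this point.
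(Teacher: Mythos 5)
Your construction is essentially the paper's: fix a universal point $x_0$, use Solecki's representation $\mathsf{I}=\mathrm{Exh}(\varphi)$ together with the constant $c$ from Definition \ref{defi:stronginvariant} to choose, for each basic open $U_k$, a threshold strictly below $c\,\|N(x_0,U_k)\|_\varphi$; expand the sublevel condition into a countable intersection of open sets via lower semicontinuity and monotonicity of $\varphi$; and use strong right-translation invariance to keep the entire orbit of $x_0$ above the threshold. Two remarks. First, the claimed identity $A_k=\bigcap_{a}\bigcup_{b\ge a}\{x: \varphi(\{n\in[a,b]:T^nx\in U_k\})>2^{-t_k}\}$ is not quite an equality: one only has
$\{x:\|N(x,U_k)\|_\varphi>2^{-t_k}\}\subseteq \bigcap_{a}\bigcup_{b\ge a}\{\cdots\}\subseteq\{x:\|N(x,U_k)\|_\varphi\ge 2^{-t_k}\}$, since an infimum of quantities each exceeding $2^{-t_k}$ may equal $2^{-t_k}$. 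This is harmless --- simply take the displayed countable intersection of open sets as the definition of your $G_\delta$ set; it still contains every $T^nx_0$ and is still contained in $\{x: N(x,U_k)\in\mathsf{I}^+\}$, which is all you use (the paper sidesteps the issue by defining its set $X_0$ directly in this form).

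Second, and more substantively, your final appeal to the Baire category theorem is not available: the theorem assumes only that $X$ is second countable, not Baire, so you cannot conclude that a countable intersection of dense $G_\delta$ sets is dense on those grounds, and you should not smuggle in a Baire hypothesis. Fortunately your own second step already closes this gap: you proved that $T^nx_0\in A_k$ for \emph{every} $n$ and \emph{every} $k$, so the dense set $\{T^nx_0:n\in\omega\}$ is contained in $\bigcap_k A_k$, which is therefore dense outright, with no category argument needed. This is exactly how the paper's proof concludes. With that one sentence replaced, your argument is correct and coincides with the paper's.
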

\begin{proof}
Let $\varphi$ be a lscsm for which $\mathsf{I}=\mathrm{Exh}(\varphi)$ and pick $c$ as in Definition \ref{defi:stronginvariant}. Let also $(U_k)$ be as in the proof of Theorem \ref{thm:clusterpointanalyticPideal} and  suppose that $T$ admits an $\mathsf{I}$-universal point $x_0 \in X$. For each $k \in \omega$, define the positive real
$$
\varepsilon_k\coloneqq 
\frac{c}{2}\,
\|\{n \in \omega: T^nx_0 \in U_k\}\|_\varphi.
$$
It is clear from 
the proof of Theorem \ref{thm:clusterpointanalyticPideal} that $\mathrm{Univ}_{T}(\mathsf{I})$ contains 
$$
X_0:=\bigcap_{k\in \omega}\bigcap_{a\in \omega} \left\{x\in X:\varphi(\{n \in [a,b]: T^nx \in U_k\})>\varepsilon_k \text{ for some }b\ge a\right\}.
$$
Note that the inner sets are open, hence $X_0$ is a $G_\delta$-set. 
Lastly, observe that $T^mx_0 \in X_0$ for all $m \in \omega$. Indeed, since $\varphi$ satisfies \eqref{eq:stronglyright}, we obtain 
\begin{displaymath}
\begin{split}
\lim_{b\to \infty} \varphi(\{n \in [a,b]: T^n(T^mx_0) \in U_k\})
&=\varphi(\{n \ge a: T^{n+m}x_0 \in U_k\})\\
&\ge \|\{n \in \omega: T^nx_0 \in U_k\}-m\,\|_\varphi \\
&\ge c\|\{n \in \omega: T^nx_0 \in U_k\}\,\|_\varphi
>\varepsilon_k
\end{split}
\end{displaymath}
for all $a,m,k \in \omega$. 
The lower semicontinuity of $\varphi$ implies that 
$$
\{T^mx_0: m \in \omega\}\subseteq X_0\subseteq \mathrm{Univ}_T(\mathsf{I}), 
$$
therefore $X_0$ is a dense $G_\delta$-subset of $\mathsf{I}$-universal points.
\end{proof}

We present two simple consequences below. For, recall that a topological space $X$ is Baire if countable unions of closed subsets with empty interior also have empty interior (examples include compact Hausdorff spaces, complete metric spaces, algebraic varieties with the Zariski topology, etc.). The following is another folklore result. 
\begin{lem}\label{lem:sumsetcomeager}
Let $X$ be a Baire topological group and $S\subseteq X$ comeager. 
Then $S+S=X$.
\end{lem}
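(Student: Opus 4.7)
The plan is to fix an arbitrary $x \in X$ and show $x \in S + S$ by producing an explicit decomposition, using the fact that translations and inversions in a topological group are homeomorphisms, together with the Baire property of $X$.

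More concretely, I would introduce the map $\phi: X \to X$ defined by $\phi(y) := x - y$ (equivalently $\phi(y) := xy^{-1}$ if $X$ is written multiplicatively). Since the group operations and inversion are continuous, $\phi$ is a homeomorphism of $X$ onto itself, hence sends nowhere dense sets to nowhere dense sets and therefore sends comeager sets to comeager sets. It follows that $\phi(S) = x - S$ is also comeager in $X$.

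The final step is to invoke the hypothesis that $X$ is Baire: the complements of $S$ and of $x - S$ are both meager, and the union of two meager sets has empty interior, so it cannot equal the whole space. Consequently $S \cap (x - S)$ is nonempty. Picking any $y$ in this intersection yields some $z \in S$ with $y = x - z$, and hence $x = z + y \in S + S$. Since $x \in X$ was arbitrary, this gives $S + S = X$.

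There is no genuine obstacle here: the argument is a standard Steinhaus--Pettis type trick exploiting that a \textquotedblleft topologically large\textquotedblright\, set must absorb everything after adding it to itself. The only point worth being careful about is that the statement is phrased additively while \emph{topological group} does not presuppose commutativity; but the same argument goes through with multiplicative notation and the homeomorphism $y \mapsto xy^{-1}$, producing $x = yz$ rather than $x = z + y$.
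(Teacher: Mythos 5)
Your proposal is correct and follows exactly the paper's argument: translate $S$ by $x$ via the homeomorphism $y\mapsto x-y$ to get another comeager set, intersect with $S$, and use the Baire property to conclude the intersection is nonempty. The extra remark on non-commutative notation is a fine clarification but does not change the route.
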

\begin{proof}
Fix $x \in X$. Then $x-S$ is comeager, hence also $S\cap (x-S)$. Considering that $X$ is Baire, then $S\cap (x-S)$ is nonempty. 
\end{proof}

A strong version of \cite[Proposition 5]{MR1685272} follows, cf. also \cite[Proposition 1.29]{MR2533318}:
\begin{cor}\label{cor:sumset}
Let $(X,T)$ be a dynamical system, where $X$ is a second countable Baire topological group, and let $\mathsf{I}$ be a $F_\sigma$-ideal on $\omega$ for which $T$ is $\mathsf{I}$-universal. Then every $x \in X$ can be written as sum of two $\mathsf{I}$-strong universal points. 
\end{cor}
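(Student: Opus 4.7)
The plan is to reduce this to a direct application of Lemma \ref{lem:sumsetcomeager} by showing that $\mathrm{sUniv}_T(\mathsf{I})$ is comeager in $X$.

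First I would observe that since $X$ is second countable it is, in particular, first countable. Hence Theorem \ref{thm:clusterpointFsigmaideal}(ii), applied to the $F_\sigma$-ideal $\mathsf{I}$ and the sequence $(T^n : n \in \omega)$, gives $\mathrm{Univ}_T(\mathsf{I}) = \mathrm{sUniv}_T(\mathsf{I})$, and Theorem \ref{thm:clusterpointFsigmaideal}(i) implies that this common set is a $G_\delta$-subset of $X$. Next, by hypothesis $T$ is $\mathsf{I}$-universal, so $\mathrm{Univ}_T(\mathsf{I})$ is nonempty, and Proposition \ref{prop:denseIuniversal} then forces it to be dense. Putting this together, $\mathrm{sUniv}_T(\mathsf{I})$ is a dense $G_\delta$-subset of the Baire space $X$, hence comeager.

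Finally I would apply Lemma \ref{lem:sumsetcomeager} to $S := \mathrm{sUniv}_T(\mathsf{I})$, which is legitimate since $X$ is a Baire topological group, to conclude that $S + S = X$. This is exactly the statement that every $x \in X$ can be written as the sum of two $\mathsf{I}$-strong universal points, completing the proof.

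There is essentially no obstacle: the corollary is a clean assembly of previously proved facts (first countability of $X$, the $G_\delta$-structure and coincidence of the ordinary and strong notions for $F_\sigma$-ideals, the dense-or-empty dichotomy, and the comeager sumset lemma in topological groups). The only thing one must check carefully is that the $F_\sigma$-hypothesis on $\mathsf{I}$ is used in invoking Theorem \ref{thm:clusterpointFsigmaideal}, and that the Baire-topological-group hypothesis is exactly what Lemma \ref{lem:sumsetcomeager} demands.
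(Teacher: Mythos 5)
Your proposal is correct and follows exactly the paper's route: the paper's proof is the one-line citation of Proposition \ref{prop:denseIuniversal}, Theorem \ref{thm:clusterpointFsigmaideal}, and Lemma \ref{lem:sumsetcomeager}, and your argument simply spells out how those three ingredients combine (nonempty $\Rightarrow$ dense, $G_\delta$ and equal to the strong-universal set by first countability, hence comeager, then the sumset lemma). No gaps.
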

\begin{proof}
It follows by Proposition \ref{prop:denseIuniversal}, Theorem \ref{thm:clusterpointFsigmaideal}, and Lemma \ref{lem:sumsetcomeager}.
\end{proof}

Analogously to Corollary \ref{cor:sumset}, we have:

\begin{cor}\label{cor:sumsetX}
Let $(X,T)$ be a dynamical system, where $X$ is a second countable Baire topological group. 
Let $\mathsf{I}$ be a strongly-right-translation invariant analytic P-ideal such that $T$ is $\mathsf{I}$-universal. Then every $x\in X$ can be written as sum of two $\mathsf{I}$-universal points. 
\end{cor}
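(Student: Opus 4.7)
The plan is to mimic the proof of Corollary \ref{cor:sumset}, substituting Theorem \ref{thm:univTemptycomeager} for Theorem \ref{thm:clusterpointFsigmaideal} at the appropriate step. The key observation is that, although for analytic P-ideals the set $\mathrm{Univ}_T(\mathsf{I})$ is only guaranteed to be $G_{\delta\sigma\delta}$ (so the $F_\sigma$-argument does not directly apply), the strong-right-translation invariance is exactly the hypothesis needed to upgrade nonemptiness of $\mathrm{Univ}_T(\mathsf{I})$ to comeagerness.

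First I would invoke the hypothesis that $T$ is $\mathsf{I}$-universal to conclude that $\mathrm{Univ}_T(\mathsf{I})$ is nonempty. Then I would apply Theorem \ref{thm:univTemptycomeager}, which uses precisely the strong-right-translation invariance of $\mathsf{I}$, to deduce that $\mathrm{Univ}_T(\mathsf{I})$ contains a dense $G_\delta$-set $X_0$. Since $X$ is Baire, such a dense $G_\delta$-set is comeager, and hence $\mathrm{Univ}_T(\mathsf{I})$ itself is comeager.

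Finally, I would apply Lemma \ref{lem:sumsetcomeager} with $S=\mathrm{Univ}_T(\mathsf{I})$: for every $x\in X$, the comeager sets $\mathrm{Univ}_T(\mathsf{I})$ and $x-\mathrm{Univ}_T(\mathsf{I})$ have nonempty intersection in the Baire topological group $X$, so there exist $s_1,s_2\in\mathrm{Univ}_T(\mathsf{I})$ with $x=s_1+s_2$. There is no substantive obstacle here: once Theorem \ref{thm:univTemptycomeager} is in hand, the argument is a formal repetition of Corollary \ref{cor:sumset}, so the only thing to be careful about is citing the correct comeagerness result (Theorem \ref{thm:univTemptycomeager} rather than Theorem \ref{thm:clusterpointFsigmaideal}) and noting that inclusion of a comeager set suffices for comeagerness.
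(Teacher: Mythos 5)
Your proposal is correct and coincides with the paper's own proof, which likewise cites Theorem \ref{thm:univTemptycomeager} followed by Lemma \ref{lem:sumsetcomeager}. (The only cosmetic remark: a dense $G_\delta$-set is comeager in any topological space, so Baire-ness is needed only for Lemma \ref{lem:sumsetcomeager}, not for the comeagerness upgrade.)
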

\begin{proof}
It follows by Theorem \ref{thm:univTemptycomeager} and Lemma \ref{lem:sumsetcomeager}.
\end{proof}

We provide now a necessary structural condition on the ideal $\mathsf{I}$ for $\mathsf{I}$-universal Lipschitz dynamical systems with a fixed point. The linear case, where $X$ is a nonzero Banach space, can be found in \cite[Proposition 1]{MR3552249}. 
\begin{prop}\label{prop:lipmaps}
Let $(X,T)$ be a dynamical system, where $X$ is an unbounded metric space and $T$ is Lipschitz. Also, let $\mathsf{I}$ be an ideal on $\omega$ and 
suppose that $T$ is $\mathsf{I}$-universal and admits a fixed point. 
Then 
there exists a sequence $(S_k: k \in \omega)$ of pairwise disjoint sets in $\mathsf{I}^+$ such that if $j \in S_k$ and $j^\prime \in S_{k^\prime}$ with $j\neq j^\prime$ then 
$
|j-j^\prime| \ge \max\{k,k^\prime\}.
$
\end{prop}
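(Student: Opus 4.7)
The plan is to realize each $S_k$ as a return set $N(x, U_k)$ of an $\mathsf{I}$-universal point $x \in X$ to a nonempty open set $U_k$, designed so that the Lipschitz bound on $T$ automatically enforces the required time separation. Let $L$ denote the Lipschitz constant of $T$ and $p$ the fixed point. Since $d(T^n x, p) \le L^n d(x,p)$ for every $n$ and the orbit of $x$ is dense in the unbounded space $X$, we first note that $L > 1$ (otherwise the orbit would sit in $\overline{B(p,d(x,p))}$) and, since density in the infinite space $X$ forbids eventual periodicity, the points $T^n x$, $n \in \omega$, are pairwise distinct.

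Fix a strictly increasing sequence $(N_k)_{k\in \omega}$ of nonnegative integers with $N_k - N_{k-1} \ge k$ for every $k \ge 1$ --- for instance, $N_k := k(k+1)/2$ --- and set $y_k := T^{N_k} x$. Since the orbit points are pairwise distinct, the equation $T^m y_k = y_{k'}$ reduces to $m = N_{k'} - N_k$, and by the choice of $(N_k)$ this is incompatible with $0 < m < \max\{k, k'\}$: if $k' \le k$ then $N_{k'} - N_k \le 0$, while if $k' > k$ then $N_{k'} - N_k \ge N_{k'} - N_{k'-1} \ge k' = \max\{k, k'\}$. Using the forward estimate $T^m(B(y_k, r)) \subseteq B(T^m y_k, L^m r)$, I would then choose inductively a radius $r_k > 0$ at each stage $k$ so that, setting $U_k := B(y_k, r_k)$, the balls $U_k$ are pairwise disjoint and
\[
T^m(U_k) \cap U_{k'} = \emptyset = T^m(U_{k'}) \cap U_k \qquad \text{for all } k' \le k \text{ and all } 0 < m < \max\{k, k'\}.
\]
At stage $k$ this amounts to finitely many inequalities of the form $r_k < c_i$ with each $c_i > 0$, the positivity being guaranteed by the distinctness statements of the previous paragraph, so the inductive step is routine.

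Finally, set $S_k := N(x, U_k)$ for every $k \in \omega$. Each $S_k \in \mathsf{I}^+$ by $\mathsf{I}$-universality of $x$, the family $(S_k)$ is pairwise disjoint because $(U_k)$ is, and if $j \in S_k$ and $j' \in S_{k'}$ are distinct then, assuming without loss of generality $j < j'$ and setting $m := j' - j$, we have $T^{j'} x = T^m(T^j x) \in T^m(U_k) \cap U_{k'}$, which by construction forces $m \ge \max\{k, k'\}$. The main issue --- more bookkeeping than true obstacle --- is arranging the $y_k$'s so that all potentially harmful orbit collisions $T^m y_k = y_{k'}$ (with $0 < m < \max\{k, k'\}$, over all pairs $k, k'$) are ruled out simultaneously; the shift-gap condition $N_k - N_{k-1} \ge k$ in the definition of $y_k$ is precisely what achieves this, after which the radii $r_k$ can be shrunk freely to satisfy the remaining finite list of geometric inequalities.
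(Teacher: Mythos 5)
Your construction has a genuine gap at the inductive choice of the radii. At stage $k$ you must arrange, for each $k'<k$ and each $0<m<k$, that $T^m(U_k)\cap U_{k'}=\emptyset$, where $U_{k'}=B(y_{k'},r_{k'})$ has a radius that was \emph{already fixed} at stage $k'$. Via the Lipschitz bound $T^m(B(y_k,r_k))\subseteq B(T^my_k,L^mr_k)$, this condition reads $L^mr_k<d(T^my_k,y_{k'})-r_{k'}$, so the constant you call $c_i$ is essentially $\bigl(d(T^my_k,y_{k'})-r_{k'}\bigr)/L^m$, and its positivity is \emph{not} guaranteed by the mere distinctness $T^my_k\neq y_{k'}$: you need $d(T^{N_k+m}x,y_{k'})>r_{k'}$, i.e.\ that $T^{N_k+m}x\notin U_{k'}$. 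Since the orbit of $x$ is dense, the fixed open ball $U_{k'}$ contains infinitely many orbit points, and nothing in your setup prevents some of them from having indices of the form $N_k+m$ with $0<m<k$ for large $k$; in that case $N_k\in S_k$, $N_k+m\in S_{k'}$ and $|{(N_k+m)-N_k}|=m<k$ violates the conclusion, and no shrinking of $r_k$ can repair it. Choosing all radii in advance does not help either, because $r_{k'}$ would have to be smaller than the infimum of $d(T^{N_k+m}x,y_{k'})$ over infinitely many pairs $(k,m)$, and that infimum can be $0$. It is telling that your argument never uses the fixed point, which is the engine of the paper's proof.

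The paper's proof resolves exactly this difficulty by exploiting the fixed point $x_\star$ and the unboundedness of $X$ quantitatively. The inequality $d(T^{n+m}x,x_\star)\le C^nd(T^mx,x_\star)$ bounds how fast the orbit can travel, so if an auxiliary point $y$ is chosen on the orbit with $d(y,x_\star)$ huge compared with $C^{2r}rp$, then the orbit cannot pass from the large ball $B(x_\star,p)$ to a small ball around $y$ (or back, or return to that small ball) in fewer than $r$ steps. This yields disjoint return sets $A=N(x,B(x_\star,p))$ and $B=N(x,B(y,1/q))$ with the required time separation, and the nesting $A_n\cup B_n\subseteq A_{n+1}$ then transfers the separation between $B_k$ and $B_{k'}$ through the anchor sets $A_n$. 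If you try to repair your construction by choosing each $y_k$ adaptively far from the fixed point so that the orbit cannot shuttle between the previously chosen balls and $B(y_k,r_k)$ in fewer than $k$ steps, you will essentially be rediscovering this argument.
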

\begin{proof}
Let $x$ be an $\mathsf{I}$-universal point and $x_\star$ be a fixed point of $T$. Also, denote by $d$ the metric on $X$ and let $C$ be a constant for which $d(Tx^\prime,Tx^{\prime\prime}) \le Cd(x^\prime,x^{\prime\prime})$ for all $x^\prime, x^{\prime\prime} \in X$. Note that $C>1$ since the orbit of $x$ is dense, $d$ is unbounded, and 
\begin{equation}\label{eq:inequalitytriangularlip}
\forall n,m \in \omega, \quad d(T^{n+m} x, x_\star) \le C^n d(T^mx,x_\star). 
\end{equation}

\begin{claim} \label{claim:claimsequencepropertyhypercyclic}
Given integers $p,r \ge 1$, there exist $y \in X$ and an integer $q\ge 1$ such that, if $A:=N(x,B(x_\star,p))$ and $B:=N(x,B(y,1/q))$ then $|a-b| \ge r$ and $|b-b^\prime| \ge r$ for all $a \in A$ and all distinct $b,b^\prime \in B$. 
\end{claim}
\begin{proof}
Fix an integer $h\ge 4r$ such that $d(T^hx,x_\star)\ge 4C^{4r}rp$, which exists because the orbit of $x$ is dense and $d$ is unbounded, and define 
$
y:=T^{h-2r}x.
$ 
It follows by \eqref{eq:inequalitytriangularlip} that 
$$
d(y,x_\star)=d(T^{h-2r}x,x_\star)\ge C^{-2r}d(T^h x,x_\star) \ge 4C^{2r}rp.
$$
Given $a \in A$, $b \in B$ and $i \in \{1,\ldots,r\}$, we obtain that 
\begin{displaymath}
\begin{split}
d(T^{a+i}x,y) &\ge d(y,x_\star)-d(T^{a+i}x,x_\star) \\
&\ge 4C^{2r}rp -C^id(T^ax,x_\star) \\
&\ge 4C^{2r}rp- C^rp \ge 4rp-p >1,
\end{split}
\end{displaymath}
so that $a+i \notin B$, and similarly 
\begin{displaymath}
\begin{split}
d(T^{b+i}x,x_\star) &\ge d(y,x_\star)-d(T^{b+i}x,y) \\
&\ge 4C^{2r}rp -C^id(T^bx,y) \\
&\ge 4C^{2r}rp- C^r 
\ge 4rp-1 > p,
\end{split}
\end{displaymath}
so that $b+i \notin A$. Lastly, since $y$ belongs to the orbit of $x$ and the metric is unbounded, it cannot be periodic. Therefore $\varepsilon:=\min\{d(T^jy,y): j=1,\ldots,r\}$ is strictly positive. Pick an integer $q>(C^r+1)/\varepsilon$. Then also
\begin{displaymath}
\begin{split}
d(T^{b+i}x,y)& \ge d(T^iy,y)-d(T^{b+i}y,T^iy) \\
&\ge \varepsilon - C^i d(T^bx,y) > \varepsilon - C^r/q > 1/q,
\end{split}
\end{displaymath}
which proves that $b+i\notin B$, and concludes the proof of the claim. 
\end{proof}

By a repeated application of Claim \ref{claim:claimsequencepropertyhypercyclic}, we can construct sequences of subsets $(A_n)$ and $(B_n)$ of $\omega$ such that $A_n\cap B_n=\emptyset$, $A_n\cup B_n\subseteq A_{n+1}$, $|a-b|\ge n$, and $|b-b^\prime| \ge n$ for all $n\in\omega$, $a \in A_n$ and $b,b^\prime \in B_n$. Therefore, the sequence $(B_n)$ has values in $\mathsf{I}^+$ (since $x$ is $\mathsf{I}$-universal) and satisfies the required property.
\end{proof}

As anticipated in the Introduction, this does not allow to show that if a certain dynamical system is $\mathsf{I}$-universal then $\mathsf{I}$ has a low topological complexity:
\begin{prop}\label{prop:nonborel}. 
Let $\mathcal{T}=(T_n: n \in \omega)$ be a sequence of continuous maps $T_n: X\to Y$, where $X$ and $Y$ are topological spaces. Suppose that $\mathcal{T}$ is $\mathsf{I}$-universal for some ideal $\mathsf{I}$ on $\omega$ with $\mathsf{I}\neq \mathrm{Fin}$. Then $\mathcal{T}$ is $\mathsf{J}$-universal for some ideal $\mathsf{J}$ which is not analytic.
\end{prop}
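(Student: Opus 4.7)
The plan is to transplant a maximal (hence non-analytic) ideal living on a countable set onto $\omega$, without destroying $\mathsf{I}$-universality. Fix an $\mathsf{I}$-universal point $x\in X$ and an infinite set $A\in \mathsf{I}$, which exists because $\mathsf{I}\neq \mathrm{Fin}$. Since $A$ is countably infinite, the power set $\mathcal{P}(A)$ is homeomorphic to a Cantor space, so I may pick a free ultrafilter $\mathcal{U}$ on $A$ and let $\mathsf{K}:=\mathcal{P}(A)\setminus \mathcal{U}$ be the associated maximal ideal on $A$. By Sierpi\'nski's classical theorem, no free ultrafilter on a countable set has the Baire property; since analytic subsets of Polish spaces do have the Baire property, neither $\mathcal{U}$ nor $\mathsf{K}$ is analytic.

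Next, I will define
\[
\mathsf{J}:=\left\{B\subseteq \omega : B\cap A\in \mathsf{K} \text{ and } B\setminus A \in \mathrm{Fin}\right\}.
\]
Routine verifications (using only that $\mathsf{K}$ is an ideal on $A$) show that $\mathsf{J}$ is an ideal on $\omega$ containing $\mathrm{Fin}$, and that $\omega\notin \mathsf{J}$ because $\omega\cap A=A\notin \mathsf{K}$. To see that $\mathsf{J}$ is not analytic, observe that $C:=\{B\subseteq \omega : B\subseteq A\}$ is closed in $\mathcal{P}(\omega)$ and $\mathsf{J}\cap C=\mathsf{K}$; were $\mathsf{J}$ analytic, then $\mathsf{J}\cap C$ would be analytic, and consequently $\mathsf{K}$ would be analytic inside $\mathcal{P}(A)$ via the canonical homeomorphism $C\cong \mathcal{P}(A)$, contradicting the choice of $\mathcal{U}$.

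Finally, I will check that the same point $x$ is $\mathsf{J}$-universal. For every nonempty open set $V\subseteq Y$ the return set $N_{\mathcal{T}}(x,V)$ belongs to $\mathsf{I}^+$, while $A\in \mathsf{I}$; since $\mathsf{I}$ is closed under finite unions, $N_{\mathcal{T}}(x,V)\setminus A$ cannot be finite, which forces $N_{\mathcal{T}}(x,V)\notin \mathsf{J}$, i.e., $N_{\mathcal{T}}(x,V)\in \mathsf{J}^+$. Hence $x\in \mathrm{Univ}_{\mathcal{T}}(\mathsf{J})$, and the proof is complete. The only nontrivial input is Sierpi\'nski's result on the non-measurability of ultrafilters; the rest of the argument is bookkeeping about how analyticity interacts with the restriction map $B\mapsto B\cap A$ and with closed subspaces of $\mathcal{P}(\omega)$.
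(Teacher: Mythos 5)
Your proof is correct and follows essentially the same strategy as the paper's: fix an infinite set $A\in\mathsf{I}$, transplant a maximal (hence non-analytic) ideal onto $A$ to build $\mathsf{J}$, deduce non-analyticity of $\mathsf{J}$ by restricting to the closed copy of $\mathcal{P}(A)$ inside $\mathcal{P}(\omega)$, and observe that the return sets of an $\mathsf{I}$-universal point remain $\mathsf{J}$-positive because $\mathsf{J}\subseteq\mathsf{I}$. The only cosmetic difference is that your $\mathsf{J}$ forces $B\setminus A$ to be finite while the paper keeps all of $\mathsf{I}$ off $A$; both choices work for the same reasons.
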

\begin{proof}
Pick an infinite set $I \in \mathsf{I}$ with increasing enumeration $(i_n: n \in \omega)$. Fix a maximal ideal $\mathsf{I}_0$ and define the ideal 
$$
\mathsf{J}:=\{S\in \mathsf{I}: \{n \in \omega: i_n \in S\} \in \mathsf{I}_0\}. 
$$
Note that $\mathcal{T}$ is $\mathsf{J}$-universal since $\mathsf{J}\subseteq \mathsf{I}$. Suppose for the sake of contradiction that $\mathsf{J}$ is analytic. Since $\mathcal{P}(I)$ is closed, then also $\mathsf{J}\cap \mathcal{P}(I)$ is analytic. 
However, $\mathsf{I}_0$ would be the continuous image of $\mathsf{J}\cap \mathcal{P}(I)$ through the map $\pi: \{0,1\}^\omega \to \{0,1\}^\omega$ defined by $(x_n: n \in \omega)\mapsto (x_{i_n}: n \in \omega)$. This contradicts the fact that $\mathsf{I}_0$ is not analytic. 
\end{proof}

\section{Return sets}\label{sec:returnsets}

In the Propositions of this Section, we are going to show several structural properties of return sets. As consequences, we are able to characterize the $\mathsf{I}$-universality of certain dynamical systems. 


\begin{prop}\label{prop:translations}
Let $(X,T)$ be a dynamical system.   
Fix a point $x \in X$ and a nonempty open set $U\subseteq X$. 
Then, for every $y \in \overline{\mathrm{orb}(x,T)}$ and every finite $S\subseteq N(y,U)$, there exists $k \in \omega$ such that $S+k\subseteq N(x,U)$. 
\end{prop}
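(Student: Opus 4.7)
The plan is a short continuity-plus-density argument. Write $S = \{n_1, \ldots, n_r\}$. The assumption $S \subseteq N(y,U)$ means $T^{n_i} y \in U$ for each $i = 1, \ldots, r$. Since each iterate $T^{n_i}$ is continuous and $U$ is open, for each $i$ the preimage $V_i := T^{-n_i}(U)$ is an open neighborhood of $y$. Because $r$ is finite,
\[
V := \bigcap_{i=1}^{r} V_i
\]
is still an open neighborhood of $y$ in $X$.

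Now invoke the hypothesis $y \in \overline{\mathrm{orb}(x,T)}$: the orbit $\{T^mx : m \in \omega\}$ meets every open neighborhood of $y$, so there exists $k \in \omega$ with $T^k x \in V$. For this $k$ and for every $i$ we have $T^{n_i+k} x = T^{n_i}(T^k x) \in T^{n_i}(V) \subseteq U$, i.e.\ $n_i + k \in N(x, U)$. This gives $S + k \subseteq N(x,U)$, as desired.

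There is no real obstacle here: the only ingredients are continuity of the iterates $T^n$, finiteness of $S$ (so that a finite intersection of open neighborhoods stays open), and the definition of the closure of the orbit. One small point worth noting is that $k$ may well equal $0$ if $y$ itself lies in the orbit or if $x = y$; the statement is stated for $k \in \omega$ and both cases are allowed, so no separate treatment is needed.
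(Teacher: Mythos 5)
Your proof is correct and is essentially the same argument as the paper's: both form the open set $V=\bigcap_{n\in S}T^{-n}[U]$, note it is a neighborhood of $y$ (the paper phrases this as $V$ being nonempty and containing $y$), and use $y\in\overline{\mathrm{orb}(x,T)}$ to pick $k$ with $T^kx\in V$. The only cosmetic difference is that the paper explicitly sets aside the trivial case $S=\emptyset$, which your enumeration $S=\{n_1,\dots,n_r\}$ implicitly assumes away; nothing is lost.
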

\begin{proof}
Fix $y \in X$ and a finite set $S\subseteq N(y,U)$. Assume that $S\neq \emptyset$ and note that $V:=\bigcap_{n \in S}T^{-n}[U]$ is an open set and, in addition, it is nonempty since $T^ny \in U$ for all $n \in S$. Since $y \in \overline{\mathrm{orb}(x,T)}$, we can pick $k \in N(x,V)$. Hence $T^kx \in V$, which implies that $T^{n+k}x \in U$ for all $n \in S$. Therefore we obtain $S+k \subseteq N(x,U)$. 
\end{proof}

The following corollary is immediate. (Note that $\mathsf{bd}^\star(S)=1$ if and only if $S$ contains arbitrarily large intervals.)
\begin{cor}\label{cor:upperBanachdensity1}
Let $(X,T)$ be a dynamical system. Pick also a universal point $x$ and an open set $U\subseteq X$ containing a fixed point of $T$. Then 
$N(x,U)$ contains arbitrarily large intervals. 
\end{cor}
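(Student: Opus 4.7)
The plan is to apply Proposition \ref{prop:translations} directly, after selecting a particularly convenient witness. Since $x$ is a universal point, its orbit is dense, so $\overline{\mathrm{orb}(x,T)}=X$. By hypothesis $U$ contains some fixed point $y$ of $T$, and then $T^n y = y \in U$ for every $n \in \omega$, so $N(y,U)=\omega$. This means that every finite interval $\{0,1,\ldots,N-1\}$ sits inside $N(y,U)$.

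Given any $N \in \omega$, I would then take $S:=\{0,1,\ldots,N-1\}\subseteq N(y,U)$ and invoke Proposition \ref{prop:translations} with this $y$ and this $S$ to obtain an integer $k \in \omega$ such that
\[
S+k=\{k,k+1,\ldots,k+N-1\}\subseteq N(x,U).
\]
Since $N$ was arbitrary, $N(x,U)$ contains intervals of every length, which is the desired conclusion.

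There is essentially no obstacle: the only ingredients are the density of the orbit of $x$ (so that $y \in \overline{\mathrm{orb}(x,T)}$) and the elementary observation that a fixed point inside $U$ has full return set. The rest is a direct application of the previously established proposition.
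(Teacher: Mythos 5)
Your proof is correct and is essentially identical to the paper's: both take the fixed point $y\in U$, observe $N(y,U)=\omega$ and $y\in\overline{\mathrm{orb}(x,T)}$ by density of the orbit, and apply Proposition \ref{prop:translations} to the finite intervals $\{0,\ldots,N-1\}$. No issues.
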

\begin{proof}
Let $y$ be a fixed point of $T$ contained in $U$. Then $N(y,U)=\omega$ and $y \in U\subseteq X=\overline{\mathrm{orb}(x,T)}$. The claim follows by Proposition \ref{prop:translations}. 
\end{proof}

As an application of Proposition \ref{prop:translations}, we characterize $\mathsf{B}$-universality extending \cite[Theorem 14]{MR3552249} and \cite[Theorem 2.1]{MR4489276}. 
\begin{thm}\label{thm:reiterativeequalhypercyclic}
Let $(X,T)$ be a dynamical system. Then the following are equivalent\textup{:}
\begin{enumerate}[label={\rm (\roman{*})}]
\item \label{item:1Buniv} $T$ is universal and $\mathrm{Univ}_T(\mathsf{B})=\mathrm{Univ}_T(\mathrm{Fin})$\textup{;}
\item \label{item:2Buniv} $T$ is $\mathsf{B}$-universal\textup{;}
\item \label{item:3Buniv} $T$ is admits a universal $\mathsf{B}$-recurrent point\textup{;}
\item \label{item:4Buniv} $T$ is universal and $\mathsf{B}$-recurrent\textup{.}
\end{enumerate}
In addition, if $X$ is a second countable Baire space, they are also equivalent to\textup{:}
\begin{enumerate}[label={\rm (\roman{*})}]
\setcounter{enumi}{4}
\item \label{item:5Buniv} $\mathrm{Univ}_T(\mathsf{B})$ is comeager\textup{;}
\item \label{item:6Buniv} the set of universal $\mathsf{B}$-recurrent points is comeager\textup{;}
\item \label{item:7Buniv} the set of universal $\mathsf{B}$-recurrent points is not meager\textup{.}
\end{enumerate}
\end{thm}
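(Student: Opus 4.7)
The plan is to establish the circular chain \ref{item:1Buniv}\,$\Rightarrow$\,\ref{item:2Buniv}\,$\Rightarrow$\,\ref{item:3Buniv}\,$\Rightarrow$\,\ref{item:4Buniv}\,$\Rightarrow$\,\ref{item:1Buniv}, and then, under the Baire second countable hypothesis, the additional loop \ref{item:2Buniv}\,$\Rightarrow$\,\ref{item:5Buniv}\,$\Rightarrow$\,\ref{item:6Buniv}\,$\Rightarrow$\,\ref{item:7Buniv}\,$\Rightarrow$\,\ref{item:3Buniv}. Several steps are essentially bookkeeping. For \ref{item:1Buniv}\,$\Rightarrow$\,\ref{item:2Buniv} one just uses that $\mathrm{Univ}_T(\mathrm{Fin})$ is nonempty by assumption. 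For \ref{item:2Buniv}\,$\Rightarrow$\,\ref{item:3Buniv}, any $\mathsf{B}$-universal point $x$ is universal (since $\mathrm{Fin}\subseteq \mathsf{B}$) and $\mathsf{B}$-recurrent (apply the definition to any open neighborhood of $x$ itself). For \ref{item:3Buniv}\,$\Rightarrow$\,\ref{item:4Buniv}, the identity $N(Tz,U)=N(z,T^{-1}(U))$ together with the continuity of $T$ show that if $z$ is $\mathsf{B}$-recurrent then so is $Tz$; iterating, the (dense) orbit of a universal $\mathsf{B}$-recurrent point lies inside $\mathrm{Rec}_T(\mathsf{B})$.

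The main work is \ref{item:4Buniv}\,$\Rightarrow$\,\ref{item:1Buniv}: one must show that every universal point $x$ is in fact $\mathsf{B}$-universal. Fix a nonempty open $U\subseteq X$. Since $\mathrm{Rec}_T(\mathsf{B})$ is dense, pick $y \in U\cap \mathrm{Rec}_T(\mathsf{B})$, so $\alpha:=\mathsf{bd}^\star(N(y,U))>0$. For all sufficiently large $n$, select an interval $[k_n,k_n+n)$ with $|N(y,U)\cap [k_n,k_n+n)|\ge (\alpha/2)n$, and set $S_n:=N(y,U)\cap [k_n,k_n+n)$. Since $x$ is universal we have $y\in \overline{\mathrm{orb}(x,T)}$, so Proposition \ref{prop:translations} produces $m_n \in \omega$ with $S_n+m_n\subseteq N(x,U)$. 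Therefore
$$
|N(x,U)\cap [k_n+m_n,\,k_n+n+m_n)|\ge |S_n|\ge (\alpha/2)n,
$$
which forces $\mathsf{bd}^\star(N(x,U))\ge \alpha/2>0$. The reverse inclusion $\mathrm{Univ}_T(\mathsf{B})\subseteq \mathrm{Univ}_T(\mathrm{Fin})$ is trivial. The central point of the argument is that upper Banach density is blind to \emph{where} the witnessing window is located, so translating $S_n$ by an uncontrolled amount $m_n$ is harmless; this is precisely what makes the statement work for $\mathsf{B}$ and would fail, e.g., for a non-translation-invariant density.

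For the Baire second countable cycle, \ref{item:2Buniv}\,$\Rightarrow$\,\ref{item:5Buniv} combines the already-established identity $\mathrm{Univ}_T(\mathsf{B})=\mathrm{Univ}_T(\mathrm{Fin})$ from \ref{item:1Buniv} with the classical Birkhoff transitivity theorem (equivalently, Proposition \ref{prop:birkoff} applied to $\mathsf{I}=\mathrm{Fin}$), which yields comeagerness of $\mathrm{Univ}_T(\mathrm{Fin})$ whenever it is nonempty. Then \ref{item:5Buniv}\,$\Rightarrow$\,\ref{item:6Buniv} follows because $\mathrm{Univ}_T(\mathsf{B})$ sits inside the set of universal $\mathsf{B}$-recurrent points (every $\mathsf{B}$-universal point is $\mathsf{B}$-recurrent and universal). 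The implication \ref{item:6Buniv}\,$\Rightarrow$\,\ref{item:7Buniv} is immediate in a Baire space, and \ref{item:7Buniv}\,$\Rightarrow$\,\ref{item:3Buniv} uses only that a non-meager set is nonempty. The sole real obstacle throughout is \ref{item:4Buniv}\,$\Rightarrow$\,\ref{item:1Buniv}; everything else is either a formal manipulation or a direct application of results already established in the excerpt.
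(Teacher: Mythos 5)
Your proposal is correct and follows essentially the same route as the paper: the same implication chain, the same use of Proposition \ref{prop:translations} to transfer windows of $N(y,U)$ into $N(x,U)$ for the key step \ref{item:4Buniv}$\Rightarrow$\ref{item:1Buniv}, and the same appeal to comeagerness of $\mathrm{Univ}_T(\mathrm{Fin})$ (Corollary \ref{cor:Gsigmaideal}) for the Baire part. The only differences are cosmetic: you close the second cycle at \ref{item:3Buniv} rather than \ref{item:4Buniv}, and your explicit density estimate loses a harmless factor of $2$ where the paper directly concludes $\mathsf{bd}^\star(N(x,U))\ge \mathsf{bd}^\star(N(y,U))$.
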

\begin{proof}
The implications \ref{item:1Buniv} $\implies$ \ref{item:2Buniv} $\implies$ \ref{item:3Buniv} are trivial. 
The implication \ref{item:3Buniv} $\implies$ \ref{item:4Buniv} follows by the fact that each element of the orbit of a universal $\mathsf{B}$-recurrent point (which is dense in $X$) is necessarily $\mathsf{B}$-recurrent. 
Lastly, let us show \ref{item:4Buniv} $\implies$ \ref{item:1Buniv}. Of course, every $\mathsf{B}$-universal point is universal. On the other hand, pick a universal point $x \in X$ and a nonempty open set $U\subseteq X$. Since $T$ is $\mathsf{B}$-recurrent there exists a $\mathsf{B}$-recurrent point $y\in U$. It follows by Proposition \ref{prop:translations} that the set $N(x,U)$ contains a translation of every finite subset of $N(y,U)$, so that $\mathsf{bd}^\star(N(x,U))\ge \mathsf{bd}^\star(N(y,U))>0$. Therefore $x$ is $\mathsf{B}$-universal, concluding the proof of the first part. 

\medskip

Suppose now that $X$ is a Baire space. Note that the implications \ref{item:5Buniv} $\implies$ \ref{item:6Buniv} $\implies$ \ref{item:7Buniv} 
$\implies$ \ref{item:3Buniv} are trivial, and the latter is equivalent to \ref{item:1Buniv}. Therefore it is sufficient to show that \ref{item:1Buniv} $\implies$ \ref{item:5Buniv}. The conclusion follows by the fact, if $T$ is universal, then $\mathrm{Univ}_T(\mathrm{Fin})$ is comeager by Corollary \ref{cor:Gsigmaideal}. 
\end{proof}

In particular, we recover a result which has been obtained by Menet \cite{MR3632557} for linear dynamical systems on Fr\'{e}chet spaces.
\begin{cor}\label{cor:chaoticimpliesBuniversal}
Let $(X,T)$ be dynamical system which is chaotic \textup{(}that is, $T$ is universal and admits a dense set of periodic points\textup{)}. Then $T$ is $\mathsf{B}$-universal. 
\end{cor}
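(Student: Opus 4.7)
The plan is to invoke Theorem \ref{thm:reiterativeequalhypercyclic} and verify condition \ref{item:4Buniv}, namely that $T$ is both universal and $\mathsf{B}$-recurrent. Universality of $T$ is given directly by the hypothesis that $(X,T)$ is chaotic, so the only remaining task is to establish that $\mathrm{Rec}_T(\mathsf{B})$ is dense in $X$.

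For this, I would argue that every periodic point is automatically $\mathsf{B}$-recurrent. Indeed, if $y \in X$ satisfies $T^p y = y$ for some integer $p \ge 1$, then for every open neighborhood $U$ of $y$ the return set $N(y,U)$ contains the arithmetic progression $p\cdot \omega = \{0,p,2p,\ldots\}$. Since $\mathsf{bd}^\star(p\cdot \omega) = 1/p > 0$, we get $N(y,U) \in \mathsf{B}^+$, so $y \in \mathrm{Rec}_T(\mathsf{B})$. By the chaotic assumption, the set of periodic points of $T$ is dense, hence so is $\mathrm{Rec}_T(\mathsf{B})$, which means $T$ is $\mathsf{B}$-recurrent.

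Combining this with the given universality, condition \ref{item:4Buniv} of Theorem \ref{thm:reiterativeequalhypercyclic} is satisfied, and the equivalence \ref{item:4Buniv}~$\Longleftrightarrow$~\ref{item:2Buniv} yields that $T$ is $\mathsf{B}$-universal. There is no real obstacle here: the argument is essentially a one-line observation about the density of periodic orbits, packaged through the previously established characterization theorem. The only minor point to be careful about is confirming that the definition of $\mathsf{B}$ in terms of $\mathsf{bd}^\star$ really does put finite-density arithmetic progressions into $\mathsf{B}^+$, which is immediate from the formula for upper Banach density.
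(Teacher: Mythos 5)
Your proposal is correct and follows exactly the paper's own argument: the paper likewise notes that every periodic point is $\mathsf{B}$-recurrent (so density of periodic points gives $\mathsf{B}$-recurrence) and then invokes the equivalence \ref{item:2Buniv}~$\Longleftrightarrow$~\ref{item:4Buniv} of Theorem \ref{thm:reiterativeequalhypercyclic}. Your explicit check that $N(y,U)\supseteq p\cdot\omega$ has positive upper Banach density is just a spelled-out version of the step the paper leaves implicit.
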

\begin{proof}
Since each periodic points is $\mathsf{B}$-recurrent, the conclusion follows by the equivalence \ref{item:2Buniv} $\Longleftrightarrow$ \ref{item:4Buniv} in Theorem \ref{thm:reiterativeequalhypercyclic}. 
\end{proof}



We provide now a result similar to Proposition \ref{prop:translations}, which allows to show that the larger $N(U,V)$ contains arbitrarily large intervals. 
\begin{prop}\label{prop:weakmixing}
Let $(X,T)$ be a universal dynamical system. 
Pick also nonempty open sets $U,V\subseteq X$ and a finite set $S\subseteq \omega$ such that $0 \in S$ and $\bigcap_{n \in S}T^{-n}[U]\neq \emptyset$. Then there are infinitely many $k\ge \max S$ such that $k-S \subseteq N(U,V)$.
\end{prop}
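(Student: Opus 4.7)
The plan is to combine the universality of $T$ with the nonempty open set $W := \bigcap_{n \in S} T^{-n}[U]$, whose existence is guaranteed by hypothesis. First, I would fix a universal point $x_0 \in X$ and note that, unpacking the definition of $\mathrm{Fin}$-cluster point, the return sets $N(x_0, W)$ and $N(x_0, V)$ must both be infinite.

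Next, I would choose any $k_1 \in N(x_0, W)$, which forces $T^{k_1}x_0 \in W$ and therefore $T^{k_1+s}x_0 \in U$ for every $s \in S$. For each $j \in N(x_0, V)$ with $j \ge k_1 + \max S$, I would then set $k := j - k_1$. For this choice $k \ge \max S$, and for every $s \in S$ the element $u_s := T^{k_1+s}x_0$ lies in $U$ and satisfies
$$
T^{k-s} u_s = T^{(k-s)+(k_1+s)}x_0 = T^{k+k_1}x_0 = T^j x_0 \in V,
$$
which shows $k - s \in N(U,V)$, hence $k - S \subseteq N(U,V)$. Because $N(x_0,V)$ contains arbitrarily large integers, this construction produces infinitely many admissible $k$.

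I do not foresee a serious obstacle: the whole argument reduces to index-chasing once the right pair of return sets is identified, and the crucial point is simply that a universal orbit simultaneously visits $W$ and $V$ infinitely often, so a single \emph{common} base point $x_0$ can be used to construct, for every $s \in S$, a witness $u_s \in U$ with $T^{k-s}u_s \in V$. The Hausdorff hypothesis on $X$ is not explicitly invoked in this proof; it fits the ambient topological setting of the paper but plays no role in the construction itself.
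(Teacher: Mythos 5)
Your proof is correct and is essentially the paper's argument: both hinge on the observation that once the orbit of a universal point sits inside $W=\bigcap_{n\in S}T^{-n}[U]$ at some time, the points $T^{n}(\cdot)\in U$ for $n\in S$ all map under $T^{k-n}$ to the same orbit element landing in $V$. The only cosmetic difference is that the paper picks a universal point directly inside $W$ (via the density of universal points, Proposition \ref{prop:denseIuniversal}), whereas you reach $W$ by shifting along the orbit of a single universal point by $k_1$ — the same time-translation trick that underlies that density result in the first place; and you are right that the Hausdorff hypothesis is not used in either version.
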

\begin{proof}
Let $\tilde{U}$ be the nonempty open set $\bigcap_{n \in S}T^{-n}[U]$ and note that $\tilde{U}\subseteq U$. Since the set of universal points is dense by Proposition \ref{prop:denseIuniversal}, we can pick a universal point $x \in \tilde{U}\subseteq U$. Fix $k$ in the infinite set $N(x,V)\setminus [0,\max S)$. 
To conclude the proof, observe that $T^{k-n}(T^nx)=T^kx\in V$ and $T^nx \in U$ for all $n \in S$. Therefore $k-S \subseteq N(U,V)$. 
\end{proof}

\begin{cor}\label{cor:mixingjshfkghj}
Let $(X,T)$ be a universal dynamical system. 
Pick also nonempty open sets $U,V\subseteq X$ such that $U$ contains a fixed point of $T$. Then $N(U,V)$ contains arbitrarily large intervals. 
\end{cor}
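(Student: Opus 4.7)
The plan is to apply Proposition \ref{prop:weakmixing} with the finite set $S$ chosen to be an initial interval $[0,m]$ for arbitrary $m \in \omega$.

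First I would verify the hypothesis of Proposition \ref{prop:weakmixing}. Let $x_\star \in U$ be a fixed point of $T$, and fix an integer $m \ge 0$. Set $S := [0,m]\cap \omega$, so $0 \in S$ and $\max S = m$. Since $T^nx_\star = x_\star \in U$ for every $n \in \omega$, we have $x_\star \in T^{-n}[U]$ for all $n \in S$, and therefore
$$
x_\star \in \bigcap_{n \in S} T^{-n}[U],
$$
so this intersection is nonempty, as required.

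Next I would invoke Proposition \ref{prop:weakmixing}: there are infinitely many $k \ge m$ such that $k - S \subseteq N(U,V)$. But $k - S = \{k-m, k-m+1, \ldots, k\}$ is an interval of length $m+1$. Since $m$ was arbitrary, it follows that $N(U,V)$ contains intervals of every finite length, i.e. it contains arbitrarily large intervals.

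There is no real obstacle here: the corollary is a direct specialization of Proposition \ref{prop:weakmixing}, and the only point worth noting is the natural choice $S = [0,m]$, which both satisfies the hypothesis (trivially, because $x_\star$ is a fixed point in $U$) and produces an interval as $k-S$. No additional hypothesis beyond universality and the presence of a fixed point in $U$ is needed.
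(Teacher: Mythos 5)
Your proposal is correct and follows exactly the paper's own argument: choose $S=\{0,1,\ldots,m\}$, observe that the fixed point in $U$ witnesses $\bigcap_{n\in S}T^{-n}[U]\neq\emptyset$, and apply Proposition \ref{prop:weakmixing} to obtain the interval $k-S$ inside $N(U,V)$. Nothing to add.
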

\begin{proof}
Let $x_0 \in U$ be a fixed point of $T$ and define $S:=\{0,1,\ldots,n\}$, for some $n \in \omega$. It follows by Proposition \ref{prop:weakmixing} that $N(U,V)$ contains a translation of $S$. 
\end{proof}

As another structural result, we show that $N(x,U)$ contains a translation of $N(x,V)$ for some neighborhood $V$ of $x$, whenever $U$ intersects the orbit of $x$. 
\begin{prop}\label{prop:returnleftinvariant}
Let $(X,T)$ be a dynamical system. Fix a point $x \in X$ and an open set $U\subseteq X$ such that $U\cap \mathrm{orb}(x,T)\neq \emptyset$. Then there exist $k \in \omega$ and an open neighborhood $V$ of $x$ such that $N(x,V)+k\subseteq N(x,U)$.
\end{prop}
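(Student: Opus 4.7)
The plan is to unpack the hypothesis $U\cap \mathrm{orb}(x,T)\neq \emptyset$ and then simply invoke the continuity of an iterate of $T$. Concretely, since the orbit of $x$ meets $U$, there exists $k \in \omega$ such that $T^kx \in U$. The map $T^k\colon X\to X$ is continuous as a composition of continuous maps, and $U$ is open, so $V:=(T^k)^{-1}[U]$ is an open set. Moreover $V$ contains $x$, hence $V$ is an open neighborhood of $x$.

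It remains to verify the inclusion $N(x,V)+k\subseteq N(x,U)$. Fix $n \in N(x,V)$, so that $T^nx \in V$. By the definition of $V$, this gives
\[
T^{n+k}x=T^k(T^nx) \in T^k[V]\subseteq U,
\]
which is precisely the statement $n+k \in N(x,U)$. Since $n$ was arbitrary, this yields the desired inclusion and completes the proof.

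There is really no obstacle here: the statement is a direct continuity argument, and the only content is the observation that the integer $k$ witnessing $U\cap\mathrm{orb}(x,T)\neq \emptyset$ is exactly the shift that makes the inclusion work, via the preimage of $U$ under $T^k$.
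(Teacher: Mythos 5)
Your proof is correct and is essentially identical to the paper's: both take the witness $k$ with $T^kx\in U$ and use continuity of $T^k$ to produce an open neighborhood $V$ of $x$ with $T^k[V]\subseteq U$ (you simply make the canonical choice $V=(T^k)^{-1}[U]$). Nothing further to add.
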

\begin{proof}
By hypothesis, there exists $k \in \omega$ such that $T^kx \in U$. Since $T$ is continuous, there exists an open neighborhood $V$ of $x$ such that $T^k[V]\subseteq U$. It follows that, if $n \in N(x,V)$, then $T^{n+k}x=T^k(T^nx) \in U$. 
\end{proof}

An analogue of Theorem \ref{thm:reiterativeequalhypercyclic} holds for left-translation invariant ideals $\mathsf{I}$. 
\begin{thm}\label{thm:upperfrequentcharacterization}
Let $(X,T)$ be a dynamical system and $\mathsf{I}$ be a left-translation invariant ideal on $\omega$. Then the following are equivalent\textup{:}
\begin{enumerate}[label={\rm (\roman{*})}]
\item \label{item:1Uuniv} there exists a universal $\mathsf{I}$-recurrent point and $\mathrm{Univ}_T(\mathrm{Fin})\cap \mathrm{Rec}_T(\mathsf{I})=\mathrm{Univ}_T(\mathsf{I})$\textup{;} 
\item \label{item:2Uuniv} $T$ is $\mathsf{I}$-universal\textup{;}
\item \label{item:3Uuniv} there exists a universal $\mathsf{I}$-recurrent point\textup{.}
\end{enumerate}
If, in addition, $X$ is a second countable Baire space and $\mathsf{I}$ is a $F_\sigma$ ideal or a strongly-right-translation invariant analytic $P$-ideal, they are also equivalent to\textup{:}
\begin{enumerate}[label={\rm (\roman{*})}]
\setcounter{enumi}{3}
\item \label{item:4Uuniv} $\mathrm{Univ}_T(\mathsf{I})$ is comeager\textup{;}
\item \label{item:5Uuniv} the set of universal $\mathsf{I}$-recurrent points is comeager\textup{;}
\item \label{item:6Uuniv} the set of universal $\mathsf{I}$-recurrent points is not meager\textup{.}
\end{enumerate}
\end{thm}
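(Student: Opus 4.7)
The plan is to prove the equivalence of (i), (ii), (iii) without any additional hypotheses on $X$ and $\mathsf{I}$, and then to handle (iv), (v), (vi) separately under the extra assumptions. For the first block, I would close the cycle (i) $\Rightarrow$ (ii) $\Rightarrow$ (iii) $\Rightarrow$ (i). The implication (i) $\Rightarrow$ (ii) is immediate: the universal $\mathsf{I}$-recurrent point asserted in (i) lies in $\mathrm{Univ}_T(\mathrm{Fin}) \cap \mathrm{Rec}_T(\mathsf{I})$, which by hypothesis equals $\mathrm{Univ}_T(\mathsf{I})$, so it witnesses $\mathsf{I}$-universality of $T$. For (ii) $\Rightarrow$ (iii), any $\mathsf{I}$-universal point $x$ satisfies $\Gamma_{\mathrm{orb}(x,T)}(\mathsf{I}) = X$, so $x \in \Gamma_{\mathrm{orb}(x,T)}(\mathsf{I})$ (hence $x$ is $\mathsf{I}$-recurrent) and $x$ is universal since $\mathsf{I}^+ \subseteq \mathrm{Fin}^+$.

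The key step, and the main obstacle, is (iii) $\Rightarrow$ (i). The existence clause in (i) is immediate from (iii), so I only need the nontrivial inclusion $\mathrm{Univ}_T(\mathrm{Fin}) \cap \mathrm{Rec}_T(\mathsf{I}) \subseteq \mathrm{Univ}_T(\mathsf{I})$. Given any universal $\mathsf{I}$-recurrent point $x$ and any nonempty open set $U \subseteq X$, universality of $x$ implies $U \cap \mathrm{orb}(x,T) \neq \emptyset$, so Proposition \ref{prop:returnleftinvariant} delivers $k \in \omega$ and an open neighborhood $V$ of $x$ with $N(x,V) + k \subseteq N(x,U)$. Since $x$ is $\mathsf{I}$-recurrent, $N(x,V) \in \mathsf{I}^+$, and left-translation invariance of $\mathsf{I}$ in its dual form---namely that $S + k \in \mathsf{I}^+$ whenever $S \in \mathsf{I}^+$ and $k \in \omega$---promotes this to $N(x,V) + k \in \mathsf{I}^+$, whence $N(x,U) \in \mathsf{I}^+$. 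Since $U$ is arbitrary, $x \in \mathrm{Univ}_T(\mathsf{I})$. This marriage of Proposition \ref{prop:returnleftinvariant} with the dual formulation of left-translation invariance is the crux of the argument.

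For the second block, under the additional hypotheses on $X$ and $\mathsf{I}$, I would prove (ii) $\Rightarrow$ (iv) $\Rightarrow$ (v) $\Rightarrow$ (vi) $\Rightarrow$ (iii). The step (ii) $\Rightarrow$ (iv) is a direct invocation of the available structural dichotomy: if $\mathsf{I}$ is $F_\sigma$, Corollary \ref{cor:Gsigmaideal} gives that $\mathrm{Univ}_T(\mathsf{I})$ is either empty or a dense $G_\delta$, hence comeager in the Baire space $X$; if instead $\mathsf{I}$ is a strongly-right-translation invariant analytic $P$-ideal, Theorem \ref{thm:univTemptycomeager} yields a dense $G_\delta$ subset of $\mathrm{Univ}_T(\mathsf{I})$, again forcing comeagerness. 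The inclusion $\mathrm{Univ}_T(\mathsf{I}) \subseteq \mathrm{Univ}_T(\mathrm{Fin}) \cap \mathrm{Rec}_T(\mathsf{I})$ already established gives (iv) $\Rightarrow$ (v); (v) $\Rightarrow$ (vi) is trivial since $X$ is a nonempty Baire space; and (vi) $\Rightarrow$ (iii) is immediate because a nonmeager set is nonempty.
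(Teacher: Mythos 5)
Your proposal is correct and follows essentially the same route as the paper: the crux of (iii) $\Rightarrow$ (i) is exactly the paper's combination of Proposition \ref{prop:returnleftinvariant} with the dual form of left-translation invariance ($S \in \mathsf{I}^+$ implies $S+k \in \mathsf{I}^+$), and the second block is reduced to (ii) $\Rightarrow$ (iv) via Corollary \ref{cor:Gsigmaideal} and Theorem \ref{thm:univTemptycomeager}, just as in the paper. The only difference is the (immaterial) ordering of the implications in the two cycles.
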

\begin{proof}
The implications \ref{item:1Uuniv} $\implies$ \ref{item:2Uuniv} $\implies$ \ref{item:3Uuniv} are clear, hence it is enough to prove \ref{item:3Uuniv} $\implies$ \ref{item:1Uuniv}. For, pick a universal $\mathsf{I}$-recurrent point $x \in X$ and an open set $U\subseteq X$. It follows by Proposition \ref{prop:returnleftinvariant} that there exist $k\in \omega$ and an open neighborhood $V$ of $x$ such that $N(x,V)+k\subseteq N(x,U)$. Since $x$ is $\mathsf{I}$-recurrent and the ideal $\mathsf{I}$ is left-invariant, we conclude that $N(x,U) \in \mathsf{I}^+$, which concludes the first part. 

\medskip

Suppose now that $X$ is a Baire space and $\mathsf{I}$ is a $F_\sigma$ ideal or an analytic $P$-ideal. Note that the implications \ref{item:4Uuniv} $\implies$ \ref{item:5Uuniv} $\implies$ \ref{item:6Uuniv} $\implies$ \ref{item:3Uuniv} are trivial, and the latter is equivalent to \ref{item:2Uuniv}. Therefore it is sufficient to show that \ref{item:2Uuniv} $\implies$ \ref{item:4Uuniv}. The conclusion follows by Corollary \ref{cor:Gsigmaideal} and Theorem \ref{thm:univTemptycomeager}.
\end{proof}

Note that, in the general framework of Theorem \ref{thm:upperfrequentcharacterization}, we are missing an equivalent condition of the type \textquotedblleft $T$ is universal and $\mathsf{I}$-recurrent.\textquotedblright\, However, even if this is correct for the ideal $\mathsf{B}$ by Theorem \ref{thm:reiterativeequalhypercyclic}, this is false for the ideal $\mathsf{Z}$: indeed, it has been shown by Menet in \cite[Theorem 1.2]{MR3632557} there exists a hypercyclic linear operator on $c_0$ which has a dense set of periodic points 
and, at the same time, it is not $\mathsf{Z}$-hypercyclic. 


We prove now that certain return sets cannot be syndetic, extending \cite[Proposition 2]{MR3552249} and \cite[Theorem 2.10]{MR4489276} (which already contain the argument of the proof): 

%


\begin{prop}\label{prop:infinitegap}
Let $(X,T)$ be a dynamical system. 
Pick $x \in X$, a fixed point $x_0$ of $T$, and suppose that $x_0 \in \overline{\mathrm{orb}(x,T)}$. 
Then 
$N(x,U)$ is not syndetic 
for every open set $U\subseteq X$ such that $x_0\notin \overline{U}$.  
\end{prop}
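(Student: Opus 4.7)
The plan is to show that the complement of $N(x,U)$ contains arbitrarily long intervals, which is equivalent to $N(x,U)$ not being syndetic. The intuition is that whenever the orbit of $x$ enters a sufficiently small neighborhood of the fixed point $x_0$, it remains near $x_0$ for as many consecutive iterates as we wish (because $x_0$ is fixed and $T^i$ is continuous at $x_0$), and during that time it cannot visit $U$ since $U$ is separated from $x_0$.

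First, using that $X$ is regular Hausdorff and $x_0\notin\overline{U}$, I would pick an open neighborhood $W$ of $x_0$ with $\overline{W}\cap \overline{U}=\emptyset$; in particular $W\cap U=\emptyset$. Next, fix an arbitrary integer $k\geq 1$. Since $x_0$ is a fixed point, $T^ix_0=x_0\in W$ for every $i\in\{0,1,\ldots,k-1\}$, and by continuity of each $T^i$ there is an open neighborhood $V_i$ of $x_0$ with $T^i[V_i]\subseteq W$. Setting $V:=\bigcap_{i=0}^{k-1}V_i$ yields an open neighborhood of $x_0$ such that $T^i[V]\subseteq W$ for all $i=0,\ldots,k-1$.

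Now invoke the hypothesis $x_0\in\overline{\mathrm{orb}(x,T)}$: there exists $m\in\omega$ with $T^mx\in V$. Consequently, for every $i\in\{0,\ldots,k-1\}$,
\[
T^{m+i}x=T^i(T^mx)\in T^i[V]\subseteq W\subseteq X\setminus U,
\]
so $[m,m+k-1]\cap N(x,U)=\emptyset$. This produces a gap of length $k$ in $N(x,U)$, and since $k$ was arbitrary, $N(x,U)$ cannot be syndetic.

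The only subtlety is the use of regularity to separate $x_0$ from $\overline{U}$ by an open set $W$ whose closure misses $U$; without regularity, we could only guarantee a neighborhood disjoint from $\overline{U}$ pointwise via Hausdorffness, which would not suffice to keep the orbit away from $U$ during the chosen interval. Everything else is a straightforward continuity-plus-density argument, so no serious obstacle is expected.
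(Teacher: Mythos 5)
Your proof is correct and is essentially the paper's argument: both pull back a neighborhood of $x_0$ disjoint from $U$ through $T^0,\dots,T^{k-1}$ (the paper's $W:=V\cap T^{-1}[V]\cap\cdots\cap T^{-g}[V]$ is exactly your $\bigcap_i V_i$) and then use $x_0\in\overline{\mathrm{orb}(x,T)}$ to find an entry time, the only difference being that you exhibit arbitrarily long gaps directly while the paper argues by contradiction from a putative gap bound $g$. One minor remark: your closing worry about regularity is unfounded, since $X\setminus\overline{U}$ is already an open neighborhood of $x_0$ disjoint from $U$, and disjointness of $W$ from $U$ (not from $\overline{U}$) is all your argument uses.
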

\begin{proof}
Let $U\subseteq X$ is be an open set such that $x_0\notin \overline{U}$. Assume that $N(x,U)$ is an infinite set (hence, in particular, $U\neq \emptyset$), otherwise the claim is trivial. 
Suppose for the sake of contradiction that there exists $g \in \omega$ such that $N(x,U) \cap [n,n+g]\neq \emptyset$ for all $n \in \omega$. Without loss of generality, $g$ is the minimal integer with this property. 
Since 
$x_0\notin \overline{U}$, we can pick an open neighborhood $V$ of $x_0$ which is disjoint from $U$. 
At this point, define 
$$
W:=V \cap T^{-1}[V] \cap \cdots \cap T^{-g}[V],
$$
which is an open set such that $x_0 \in W\subseteq V$. Observe that $N(x,W)\neq \emptyset$ since $x_0 \in \overline{\mathrm{orb}(x,T)}$. 
It follows that $T^{n+k}x=T^k(T^nx) \in V$ for all $k \in \{0,\ldots,g\}$ and $n \in N(x,W)$. Therefore $N(x,U) \cap [n,n+g]=\emptyset$, which contradicts the standing hypothesis. 
\end{proof}

\begin{cor}\label{cor:lowerBanachdensity}
Let $(X,T)$ be a dynamical system. 
Let also $U\subseteq X$ be a nonempty open set such that $\overline{U}$ does not contain all fixed points of $T$. 
Then $N(x,U)$ is not syndetic for all universal points $x \in X$.
\end{cor}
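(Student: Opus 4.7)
The plan is to derive this corollary as a direct consequence of Proposition \ref{prop:infinitegap}. By hypothesis, $\overline{U}$ fails to contain some fixed point of $T$; call it $x_0$, so $x_0 \notin \overline{U}$. Now take any universal point $x \in X$. Since $\mathrm{orb}(x,T)$ is dense in $X$ by definition of universality, we have $\overline{\mathrm{orb}(x,T)} = X$, and in particular $x_0 \in \overline{\mathrm{orb}(x,T)}$.

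At this point the hypotheses of Proposition \ref{prop:infinitegap} are satisfied: $X$ is regular Hausdorff, $x_0$ is a fixed point of $T$ lying in the orbit closure of $x$, and $U$ is an open set with $x_0 \notin \overline{U}$. The proposition therefore gives that $N(x,U)$ is not syndetic, which is exactly the desired conclusion.

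There is essentially no obstacle here; the corollary is an immediate specialization of Proposition \ref{prop:infinitegap} in which the hypothesis $x_0 \in \overline{\mathrm{orb}(x,T)}$ is automatic because the orbit of a universal point is the whole space. The only small point to note is that the set of fixed points is implicitly assumed to be nonempty (otherwise the condition \textquotedblleft $\overline{U}$ does not contain all fixed points\textquotedblright\ is vacuously false), so the choice of $x_0$ is legitimate.
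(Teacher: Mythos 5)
Your proof is correct and is exactly the paper's argument: density of the orbit of a universal point makes the hypothesis $x_0\in\overline{\mathrm{orb}(x,T)}$ of Proposition \ref{prop:infinitegap} automatic, and the conclusion follows. Your side remark that the hypothesis forces the existence of a fixed point outside $\overline{U}$ is also the right reading of the statement.
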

\begin{proof}
Since $\mathrm{orb}(x,T)$ is dense in $X$, the claim follows by Proposition \ref{prop:infinitegap}.  
\end{proof}

The following consequence should be compared with the main result obtained by Grivaux in \cite{MR3771261}, where the analogue problem has been studied for certain linear dynamical systems on Banach spaces, replacing the upper and lower Banach density with the upper and lower asymptotic density, respectively. 
\begin{cor}\label{cor:differentdensities}
Let $(X,T)$ be a dynamical system. 
Pick $x \in X$, a fixed point $x_0$ of $T$, and a nonempty open set $U\subseteq X$ with $x_0 \notin \overline{U}$. Suppose also that $U$ contains a $\mathsf{B}$-recurrent point $x_1$ and that $x_0,x_1 \in \overline{\mathrm{orb}(x,T)}$. Then 
$$
\mathsf{bd}_\star(N(x,U))=0 
\quad \text{ and }\quad 
\mathsf{bd}^\star(N(x,U))>0.
$$
\end{cor}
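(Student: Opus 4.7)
The plan is to prove the two density statements separately, each being a direct consequence of one of the propositions developed earlier in the section.

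For $\mathsf{bd}_\star(N(x,U))=0$, I will invoke Proposition \ref{prop:infinitegap}. The hypotheses match verbatim: $x_0$ is a fixed point of $T$, it lies in $\overline{\mathrm{orb}(x,T)}$, and $x_0\notin \overline{U}$. Therefore $N(x,U)$ is not syndetic. As noted in the paragraph preceding Example \ref{example:syndetic}, $S\subseteq \omega$ is syndetic if and only if $\mathsf{bd}_\star(S)>0$, so non-syndeticity of $N(x,U)$ is exactly the assertion that $\mathsf{bd}_\star(N(x,U))=0$.

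For $\mathsf{bd}^\star(N(x,U))>0$, I will exploit Proposition \ref{prop:translations} together with the $\mathsf{B}$-recurrence of $x_1$. Since $x_1 \in U$ is $\mathsf{B}$-recurrent, we have $\alpha:=\mathsf{bd}^\star(N(x_1,U))>0$. By the definition of upper Banach density, one can find integers $j_n \in \omega$ such that, letting $F_n := N(x_1,U) \cap [j_n,j_n+n)$, the ratio $|F_n|/n$ converges to $\alpha$. Each $F_n$ is a finite subset of $N(x_1,U)$, and since $x_1 \in \overline{\mathrm{orb}(x,T)}$, Proposition \ref{prop:translations} produces an integer $k_n \in \omega$ with $F_n+k_n \subseteq N(x,U)$. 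Because $F_n+k_n$ lies in the interval $[j_n+k_n,\,j_n+k_n+n)$ and has cardinality $|F_n|$, we obtain
$$
\max_{m \in \omega} \frac{1}{n}\,\bigl|N(x,U) \cap [m,m+n)\bigr| \,\ge\, \frac{|F_n|}{n}.
$$
Letting $n\to \infty$ yields $\mathsf{bd}^\star(N(x,U)) \ge \alpha > 0$, as desired.

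The proof is really just an assembly of the two preceding propositions; the only mildly delicate point is making sure that the finite subsets $F_n$ chosen from $N(x_1,U)$ realize the upper Banach density in the limit, so that translating them into $N(x,U)$ preserves the density lower bound. No significant obstacle is anticipated.
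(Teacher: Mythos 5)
Your proposal is correct and follows the paper's own proof essentially verbatim: the first claim from Proposition \ref{prop:infinitegap} (plus the equivalence of syndeticity with positive lower Banach density), the second from Proposition \ref{prop:translations} applied to finite subsets of $N(x_1,U)$. Your explicit verification that translating the near-optimal finite windows $F_n$ preserves the density bound merely fills in a step the paper leaves implicit.
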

\begin{proof}
The return set $N(x,U)$ contains arbitrary large gaps by Proposition \ref{prop:infinitegap}, hence $\mathsf{bd}_\star(N(x,U))=0$. In addition, since $x_1$ is $\mathsf{B}$-recurrent and belongs to $U$, for every finite subset $S\subseteq N(x_1,U)$ there exists $k \in \omega$ such that $S+k\subseteq N(x,U)$ by Proposition \ref{prop:translations}. Therefore $\mathsf{bd}^\star(N(x,U))\ge \mathsf{bd}^\star(N(x_1,U))>0$.
\end{proof}


The following result has been used, more or less explicitly, in several works, including for instance \cite{MR2323544, MR4242546, Grivaux2023, MR2153969, MR3742548}. 
\begin{prop}\label{prop:differencesets}
Let $(X,T)$ be a dynamical system. 
Then, for all nonempty open sets $U,V \subseteq X$, and all integers 
$n \in N(U,V):=\{n \in \omega: T^ny\in V \text{ for some }y \in U\}$, 
there exists a nonempty open set $W\subseteq X$ such that 
$$
N(x,W)-N(x,W)+n \subseteq N(U,V)
$$
for all universal points $x\in X$. 
\end{prop}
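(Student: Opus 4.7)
The plan is to use the continuity of $T^n$ to fatten the witness of $n\in N(U,V)$ into an open neighborhood $W$ that works uniformly in $x$. First, I would unpack $n\in N(U,V)$: fix $y\in U$ with $T^n y\in V$. By continuity of $T^n$ and openness of $V$, the set $U\cap (T^n)^{-1}[V]$ is an open neighborhood of $y$, so pick any nonempty open $W$ with $y\in W\subseteq U\cap (T^n)^{-1}[V]$. Then $W\subseteq U$, $T^n[W]\subseteq V$, and, crucially, $W$ depends only on the data $U$, $V$, $n$, not on any particular $x$.

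To verify the inclusion $N(x,W)-N(x,W)+n\subseteq N(U,V)$ for an arbitrary $x\in X$ (and, in particular, every universal point), I would take $j,k\in N(x,W)$ with $j-k+n\in\omega$ and set $z:=T^k x$. Then $z\in W\subseteq U$, and the semigroup identity $T^{a+b}=T^a T^b$ gives
\[
T^{j-k+n}z \;=\; T^{j-k+n}(T^k x) \;=\; T^{j+n}x \;=\; T^n(T^j x) \;\in\; T^n[W] \;\subseteq\; V,
\]
where I have used $T^j x\in W$. Hence $z\in U$ witnesses $j-k+n\in N(U,V)$, which is exactly the required inclusion.

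The universality of $x$ is in fact not used in the inclusion itself; the hypothesis appears in the statement so that $N(x,W)$ is infinite, ensuring that $N(x,W)-N(x,W)+n$ is a genuinely rich subset of $N(U,V)$ (this is how the proposition will presumably be applied, in the spirit of the other structural results of this section). I do not foresee a real obstacle: the one move that has to be spotted is that the witness in $U$ should be the iterate $T^k x$ rather than the original point $y$. This choice is precisely what converts the return $T^j x\in W$ into the return $T^{j-k+n}z\in V$, via the rewriting $T^{j-k+n}T^k = T^{n}T^j$.
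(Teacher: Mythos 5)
Your proof is correct and follows essentially the same route as the paper: the paper also takes $W:=U\cap T^{-n}(V)$ (nonempty precisely because $n\in N(U,V)$) and, for $p,q\in N(x,W)$, uses the witness $y:=T^qx\in W\subseteq U$ together with $T^{p-q+n}y=T^{p+n}x=T^n(T^px)\in V$. Your closing remark that universality is only needed to make $N(x,W)$ large, not for the inclusion itself, is accurate and consistent with how the proposition is used later.
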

\begin{proof}
Suppose that $x \in X$ is universal. Fix a nonempty open sets $U,V\subseteq X$, an integer $n\in N(U,V)$, and define $W:=U\cap T^{-n}(V)$. Note that the latter is open and nonempty by the universality of $x$. 
Now, pick $p,q \in N(x,W)$ with $p>q$. Then we need to show that there exists $y \in U$ such that $T^{p-q+n}y \in V$. To this aim, set $y:=T^qx$, so that $y \in W\subseteq U$, and note that $T^p x \in W\subseteq T^{-n}(V)$. Therefore $T^{p-q+n}y=T^{p+n}x \in V$. 
\end{proof}


As a first application, we recover essentially \cite[Proposition 4]{MR3552249}:
\begin{cor}\label{cor:Brecurrentuniversal}
Let $(X,T)$ be a dynamical system which admits a universal $\mathsf{B}$-recurrent point. 
Then $T$ is topologically ergodic, that is, $N(U,V)$ is syndetic 
for all nonempty open sets $U,V \subseteq X$.
\end{cor}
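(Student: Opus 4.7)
The plan is to leverage Proposition \ref{prop:differencesets} together with the classical fact that the difference set of a set of positive upper Banach density is syndetic (a theorem essentially due to F\o{}lner). Let $x$ be a universal $\mathsf{B}$-recurrent point and fix nonempty open sets $U,V\subseteq X$. Since $x$ is universal, its orbit is dense, so $N(U,V)$ is nonempty; fix $n \in N(U,V)$. By Proposition \ref{prop:differencesets} applied to the universal point $x$, there exists a nonempty open set $W\subseteq X$ such that
\[
N(x,W)-N(x,W)+n \subseteq N(U,V).
\]

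The crucial intermediate step is to show that $N(x,W) \in \mathsf{B}^+$, even though $W$ is not a priori a neighborhood of $x$. For this, use universality of $x$ to pick $k \in N(x,W)$, i.e., $T^kx \in W$. By continuity of $T^k$, the set $O:=T^{-k}[W]$ is an open neighborhood of $x$, and a direct computation gives
\[
N(x,W) = N(x,O)+k.
\]
Since $x$ is $\mathsf{B}$-recurrent, $N(x,O) \in \mathsf{B}^+$, and because the upper Banach density is translation invariant, we conclude $N(x,W) \in \mathsf{B}^+$, that is, $\mathsf{bd}^\star(N(x,W))>0$.

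Now invoke the classical theorem that any $A\subseteq \omega$ with $\mathsf{bd}^\star(A)>0$ has $A-A$ syndetic (an easy pigeonhole argument: if $A$ has upper Banach density $\delta>0$, then within any window of length $\lceil 1/\delta\rceil +1$ two elements of $A$ must differ by an element already in any shift, giving bounded gaps for $A-A$). Applied to $A = N(x,W)$, the difference set $N(x,W)-N(x,W)$ is syndetic, and so is its shift by $n$. Since $N(U,V)$ contains this syndetic set and any superset of a syndetic set is syndetic, we conclude that $N(U,V)$ is syndetic, proving topological ergodicity.

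The main obstacle I anticipate is step two, showing $N(x,W) \in \mathsf{B}^+$ without $W$ being a neighborhood of $x$; the trick is the continuity-plus-translation-invariance argument above, which trades a visit to $W$ for a neighborhood of $x$ and then translates back. Everything else is an assembly of already-stated results, namely Proposition \ref{prop:differencesets} plus the standard combinatorial lemma on Banach-density difference sets.
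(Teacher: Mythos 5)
Your proof is correct and takes essentially the paper's route: Proposition \ref{prop:differencesets} plus the Erd\H{o}s--S\'{a}rk\"{o}zy/F\o{}lner theorem that $S-S$ is syndetic whenever $S\in\mathsf{B}^+$. The only divergence is how you certify $N(x,W)\in\mathsf{B}^+$: the paper invokes Theorem \ref{thm:reiterativeequalhypercyclic} to upgrade $x$ to a $\mathsf{B}$-universal point, whereas your pull-back trick ($N(x,W)\supseteq N(x,T^{-k}[W])+k$, then translation invariance of $\mathsf{bd}^\star$) is exactly Proposition \ref{prop:returnleftinvariant} specialized to the left-translation invariant ideal $\mathsf{B}$ --- equally valid, and your asserted equality $N(x,W)=N(x,O)+k$ is only true up to the finitely many elements of $N(x,W)$ below $k$, which is harmless here.
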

\begin{proof} 
Thanks to Theorem \ref{thm:reiterativeequalhypercyclic}, $T$ is $\mathsf{B}$-universal. 
The conclusion follows by Proposition \ref{prop:differencesets} and the well-known result due to Erd{\H o}s and S\'{a}rk\"{o}zy that $S-S$ is syndetic whenever $S \in \mathsf{B}^+$, see e.g. \cite[Proposition 3.19]{MR0603625}.
\end{proof}

\begin{cor}\label{cor:weakmixingslkjf}
Let $(X,T)$ be a $\mathsf{B}$-universal dynamical system. 
Then $N(U,V) \cap N(V,W) \neq \emptyset$ for all nonempty open sets $U,V,W \subseteq X$ such that $V$ contains a fixed point of $T$. 
\end{cor}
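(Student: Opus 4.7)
The plan is to combine two previously established properties of $\mathsf{B}$-universal systems: topological ergodicity of $T$ (which forces $N(U,V)$ to be syndetic), and the presence of arbitrarily long intervals in $N(V,W)$ coming from the fixed point in $V$. A syndetic set meets every sufficiently long interval, so these two facts collide to give the claimed nonempty intersection.

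More concretely, I would proceed in three steps. First, I would verify that the hypothesis of Corollary \ref{cor:Brecurrentuniversal} is satisfied: any $\mathsf{B}$-universal point $x$ is simultaneously universal (because $\mathrm{Fin}\subseteq \mathsf{B}$) and $\mathsf{B}$-recurrent (because $N(x,U)\in\mathsf{B}^+$ already for every neighborhood $U$ of $x$). Hence $T$ admits a universal $\mathsf{B}$-recurrent point, and Corollary \ref{cor:Brecurrentuniversal} yields that $T$ is topologically ergodic; in particular $N(U,V)$ is syndetic.

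Second, since $T$ is universal, $X$ is Hausdorff, and $V$ contains a fixed point of $T$, Corollary \ref{cor:mixingjshfkghj} applies with $V$ in place of the set containing the fixed point, and gives that $N(V,W)$ contains arbitrarily large intervals.

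Finally, let $g\ge 1$ be a syndeticity constant for $N(U,V)$, so that $N(U,V)\cap [n,n+g]\neq \emptyset$ for every $n\in \omega$. By the second step, pick an interval $[a,a+g]$ entirely contained in $N(V,W)$. This interval must meet $N(U,V)$, producing an integer $n \in N(U,V)\cap N(V,W)$, which proves the claim. The argument has no genuine obstacle: the real content is already packaged in Corollaries \ref{cor:Brecurrentuniversal} and \ref{cor:mixingjshfkghj}, so the only check is the trivial passage from $\mathsf{B}$-universality to the existence of a universal $\mathsf{B}$-recurrent point.
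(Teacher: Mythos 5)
Your proposal is correct and follows exactly the paper's route: the paper's proof of this corollary is precisely the combination of Corollary \ref{cor:Brecurrentuniversal} (syndeticity of $N(U,V)$, via the observation that a $\mathsf{B}$-universal point is a universal $\mathsf{B}$-recurrent point) with Corollary \ref{cor:mixingjshfkghj} (arbitrarily long intervals in $N(V,W)$), and your final step matching a syndetic set against a long interval is the intended conclusion.
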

\begin{proof}
It follows by Corollary \ref{cor:mixingjshfkghj} and Corollary \ref{cor:Brecurrentuniversal}.
\end{proof}

In the context of linear dynamics, Corollary \ref{cor:weakmixingslkjf} simplifies and extends \cite[Theorem 2.2]{MR2153969} (although the underlying ideas are the same):
\begin{cor}
Let $(X,T)$ be a linear dynamical system, where $X$ is a Fr\'{e}chet space. If $T$ is $\mathsf{B}$-universal then $T\oplus T$ is universal. 
\end{cor}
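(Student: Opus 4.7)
The plan is to combine Corollary \ref{cor:weakmixingslkjf} with the classical Bès--Peris characterization of weak mixing for continuous linear operators on Fréchet spaces. By the Birkhoff transitivity theorem applied to the separable Fréchet space $X \oplus X$, the operator $T \oplus T$ is universal (equivalently, hypercyclic) if and only if
$$
N(U_1, V_1) \cap N(U_2, V_2) \neq \emptyset
$$
for every quadruple of nonempty open sets $U_1, V_1, U_2, V_2 \subseteq X$.

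Since $T$ is linear, the origin $0 \in X$ is a fixed point. Hence, for arbitrary nonempty open sets $U, V \subseteq X$ and any open neighborhood $W$ of $0$, the set $W$ contains the fixed point $0$, so Corollary \ref{cor:weakmixingslkjf} (whose hypotheses are satisfied because $T$ is $\mathsf{B}$-universal and the Fréchet space $X$ is Hausdorff) yields
$$
N(U, W) \cap N(W, V) \neq \emptyset.
$$
This is precisely the Bès--Peris criterion characterizing weak mixing for a continuous linear operator on a Fréchet space: the displayed condition, required for all nonempty open pairs $(U, V)$ and all $0$-neighborhoods $W$, is equivalent to $T \oplus T$ being hypercyclic on $X \oplus X$ (see, e.g., \cite[Chapter 4]{MR2533318} or \cite[Chapter 6]{MR2919812}). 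The conclusion then follows immediately.

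The only delicate ingredient is the Bès--Peris equivalence itself, invoked as a black box from the cited references; neither Corollary \ref{cor:weakmixingslkjf} nor the reduction to Birkhoff transitivity on $X \oplus X$ requires further work. Linearity enters precisely in that equivalence, which bridges the $0$-neighborhood condition above with the arbitrary-open-set condition of Birkhoff transitivity on $X \oplus X$; the passage relies on continuity of $T$ together with translation by arbitrary vectors to shrink the four given open sets to configurations centered at $0$. This is also the reason why the corollary is stated only in the linear setting: a generic nonlinear system may lack the translation structure needed to implement this reduction, even though the ingredient supplied by Corollary \ref{cor:weakmixingslkjf} holds in much greater generality.
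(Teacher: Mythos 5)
Your proof is correct and follows essentially the same route as the paper's: apply Corollary \ref{cor:weakmixingslkjf} with the middle open set taken to be a $0$-neighborhood (which contains the fixed point $0$ by linearity), and then invoke the known three-open-set characterization of the hypercyclicity of $T\oplus T$ — the paper cites exactly this as \cite[Theorem 2.47]{MR2919812}. The extra discussion of Birkhoff transitivity on $X\oplus X$ and the role of translations is accurate but not needed beyond the black-box citation.
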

\begin{proof}
Thanks to Corollary \ref{cor:weakmixingslkjf}, $N(U,V) \cap N(V,W) \neq \emptyset$ for all nonempty open sets $U,V,W \subseteq X$ such that $0 \in V$. According to \cite[Theorem 2.47]{MR2919812}, the latter condition is equivalent to the universality of $T\oplus T$.
\end{proof}

We show now that, if $x$ is not periodic, then the gaps of $N(x,U)$ can be bounded from below.
\begin{prop}\label{prop:smallestgap}
Let $(X,T)$ be a dynamical system and fix $x \in X$ which is not periodic. 
Then for all $k\in \omega$ there exists 
an open neighborhood $U$ of $x$ such that $|s-t|>k$ for all distinct $s,t \in N(x,U)$. 
\end{prop}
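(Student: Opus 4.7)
The plan is to find an open neighborhood $U$ of $x$ satisfying
\[
U \cap T^{-m}[U] = \emptyset \quad \text{for every } m \in \{1, \dots, k\}.
\]
This property alone delivers the conclusion: if $s,t \in N(x,U)$ were distinct with $0 < t-s \le k$, then setting $m := t-s$ and $y := T^s x \in U$ we would have $T^m y = T^t x \in U$, contradicting the display above. Hence every gap in $N(x,U)$ exceeds $k$.

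To construct $U$, I would build an auxiliary neighborhood $U_m$ for each $m \in \{1,\dots,k\}$ separately and then take $U := \bigcap_{m=1}^{k} U_m$. For a fixed $m$, non-periodicity of $x$ gives $T^m x \neq x$, so (assuming $X$ Hausdorff, as is tacit in the surrounding propositions of this section) one can pick disjoint open sets $V_m \ni x$ and $W_m \ni T^m x$. Set
\[
U_m := V_m \cap T^{-m}[W_m],
\]
which is open by continuity of $T^m$ and contains $x$ since $T^m x \in W_m$. For any $y \in U_m$ one has $y \in V_m$ and $T^m y \in W_m$, so $T^m y$ cannot also lie in $U_m \subseteq V_m$; hence $U_m \cap T^{-m}[U_m] = \emptyset$. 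The intersection $U$ inherits the corresponding disjointness against $T^{-m}[U] \subseteq T^{-m}[U_m]$ for every $m \in \{1,\dots,k\}$, completing the construction.

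The only genuine obstacle is this separation step: without a Hausdorff-type hypothesis the choice of disjoint $V_m, W_m$ can fail, and the statement itself breaks down in sufficiently coarse topologies (for instance the indiscrete two-point space with $T(0)=T(1)=0$ has $x=1$ non-periodic, yet its only open neighborhood is the whole space, so $N(x,U) = \omega$). Beyond this separation point, the argument is a routine coupling of non-periodicity with the continuity of the iterates, so no further difficulties are anticipated.
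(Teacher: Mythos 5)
Your proof is correct and follows essentially the same route as the paper's: the paper likewise produces a single neighborhood $U$ of $x$ with $T^n[U]\cap U=\emptyset$ for all $n\in\{1,\dots,k\}$ and then derives the gap bound from the same contradiction with $y=T^tx$. You are also right to flag that producing such a $U$ needs a separation hypothesis (Hausdorffness suffices, via your $U_m=V_m\cap T^{-m}[W_m]$ construction), which the paper attributes only to ``the continuity of $T$''; your indiscrete two-point example shows the hypothesis is genuinely needed, so your version actually makes explicit a step the paper leaves tacit.
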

\begin{proof}
Fix a nonzero $k\in \omega$. Since $x$ is not periodic then $T^nx\neq x$ for all $n\ge 1$.
By the continuity of $T$ 
there exists an open neighborhood $U$ of $x$ such that $T^n[U]\cap U=\emptyset$ for all $n \in \{1,\ldots,k\}$. 
Suppose for the sake of contradiction that there exist $s,t \in N(x,U)$ with $1\le s-t \le k$. Note that $T^{s-t}[U] \cap U=\emptyset$ by the standing hypothesis. 
However, we have $y:=T^tx \in U$ and $T^{s-t}y=T^sx \in U$, so that $y \in T^{s-t}[U] \cap U$. 
\end{proof}

\begin{cor}\label{cor:banachintornox}
Let $(X,T)$ be a dynamical system and fix $x \in X$ which is not periodic. Then for all $\varepsilon>0$ there exists a  neighborhood $U$ of $x$ such that $\mathsf{bd}^\star(N(x,U))\le \varepsilon$. 
\end{cor}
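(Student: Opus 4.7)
The plan is to deduce this directly from Proposition \ref{prop:smallestgap}, combined with the elementary observation that a set whose consecutive gaps are all bounded below by a constant $k+1$ has upper Banach density at most $1/(k+1)$.

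More precisely, given $\varepsilon>0$, I would first choose an integer $k\in\omega$ large enough that $1/(k+1)\le\varepsilon$. By Proposition \ref{prop:smallestgap}, since $x$ is not periodic, there exists an open neighborhood $U$ of $x$ such that $|s-t|>k$ for all distinct $s,t\in N(x,U)$. In particular, any two consecutive elements of $N(x,U)$ (in its increasing enumeration) differ by at least $k+1$.

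The key step is then a counting bound: for every $a\in\omega$ and every $n\ge 1$, any window $[a,a+n)$ of length $n$ can contain at most $\lceil n/(k+1)\rceil$ elements of $N(x,U)$, because if it contained $m$ such elements $s_1<\cdots<s_m$, then $s_m-s_1\ge (m-1)(k+1)$ and $s_m-s_1<n$, forcing $m\le n/(k+1)+1$. Dividing by $n$ and passing to the limit in the definition of $\mathsf{bd}^\star$ yields
$$
\mathsf{bd}^\star(N(x,U))\le \limsup_{n\to\infty}\frac{\lceil n/(k+1)\rceil}{n}=\frac{1}{k+1}\le\varepsilon.
$$

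No serious obstacle is expected: the nontrivial content sits entirely in Proposition \ref{prop:smallestgap}, and the remainder is a routine counting argument for sets with large gaps.
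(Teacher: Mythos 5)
Your proposal is correct and follows essentially the same route as the paper: both proofs invoke Proposition \ref{prop:smallestgap} to obtain a neighborhood $U$ whose return set has all gaps exceeding a suitably large $k$, and then conclude by a routine counting bound on the number of elements per window. The only cosmetic difference is that you carry out the count in windows of arbitrary length $n$ and pass to the limit explicitly, whereas the paper bounds the density using windows of length exactly $k$; both are standard and valid.
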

\begin{proof}
Fix an integer $k\ge 1/\varepsilon$. It follows by Proposition \ref{prop:smallestgap} that there exists a neighborhood $U$ of $x$ such that 
$$
\mathrm{bd}^\star(N(x,U))\le \max_{n \in \omega}\frac{|N(x,U)\cap [n,n+k)|}{k}\le \frac{1}{k}\le \varepsilon,
$$
concluding the proof. 
\end{proof}

As another consequence, we obtain a generalization of \cite[Remark 4.8]{MR3255465}:
\begin{thm}\label{thm:nonexistencestronglyuniversal}
Let $(X,T)$ be a dynamical system. 
Then each point is $\mathsf{B}$-strong recurrent if and only if it is periodic.
\end{thm}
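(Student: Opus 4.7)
The plan is to prove the two implications separately, with the sufficiency being essentially immediate and the necessity following by contradiction using Corollary \ref{cor:banachintornox}.

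\textbf{Sufficiency.} Suppose $x$ is periodic, so there exists $p\ge 1$ with $T^px=x$. Then the arithmetic progression $A=p\omega=\{0,p,2p,\ldots\}$ satisfies $T^nx=x$ for every $n \in A$, so trivially $\lim_{n \in A}T^nx=x$. Since $\mathsf{bd}^\star(A)=1/p>0$, we have $A \in \mathsf{B}^+$. Hence $x \in \Lambda_{\mathcal{T}x}(\mathsf{B})$, i.e., $x$ is $\mathsf{B}$-strong recurrent.

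\textbf{Necessity.} Suppose for the sake of contradiction that $x$ is $\mathsf{B}$-strong recurrent and \emph{not} periodic. By definition, there exists $A \in \mathsf{B}^+$ such that the subsequence $(T^nx: n \in A)$ converges to $x$; set $\delta:=\mathsf{bd}^\star(A)>0$. Apply Corollary \ref{cor:banachintornox} with $\varepsilon:=\delta/2$ to obtain a neighborhood $U$ of $x$ with $\mathsf{bd}^\star(N(x,U))\le \varepsilon$. Since $T^nx\to x$ along $A$, the set $A\setminus N(x,U)$ is finite (only finitely many iterates along $A$ can fall outside $U$). Using that the upper Banach density is invariant under finite modifications and monotone, we get
$$
\delta=\mathsf{bd}^\star(A)=\mathsf{bd}^\star(A\cap N(x,U))\le \mathsf{bd}^\star(N(x,U))\le \varepsilon=\delta/2,
$$
which is a contradiction. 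Therefore $x$ must be periodic.

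The only nontrivial step is the necessity direction, and the key ingredient is Corollary \ref{cor:banachintornox}, which lets us make the return set to a sufficiently small neighborhood of any non-periodic point have arbitrarily small upper Banach density. Once we have this, the rest is purely bookkeeping: convergence along $A$ forces $A$ to coincide with $N(x,U)$ up to a finite set, so the upper Banach density of $A$ inherits the smallness bound.
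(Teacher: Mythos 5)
Your proof is correct and follows essentially the same route as the paper: both directions match, and the necessity argument is exactly the paper's (apply Corollary \ref{cor:banachintornox} to get a neighborhood $U$ with small $\mathsf{bd}^\star(N(x,U))$, then use that convergence along $A$ forces $A\subseteq N(x,U)$ up to a finite set, contradicting monotonicity and finite-invariance of the upper Banach density). Your explicit verification of the sufficiency direction via the progression $p\cdot\omega$ is just a spelled-out version of what the paper treats as immediate.
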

\begin{proof}
Each periodic point is, of course, $\mathsf{B}$-strong recurrent. Conversely, let us suppose for the sake of contradiction that there exists a $\mathsf{B}$-strong recurrent nonperiodic point $x \in X$. Then there exists a set $S \in \mathsf{B}^+$ such that $\lim_{n\in S}T^nx=x$. Thanks to Corollary \ref{cor:banachintornox}, it is possible to fix a neighborhood $U$ of $x$ such that $0<\mathsf{bd}^\star(N(x,U))<\mathsf{bd}^\star(S)$. But this is a contradiction since $N(x,U)\supseteq S\setminus F$ for some finite set $F\in \mathrm{Fin}$ so that $\mathsf{bd}^\star(N(x,U)) \ge \mathsf{bd}^\star(S\setminus F)=\mathsf{bd}^\star(S)$. 
\end{proof}

\begin{cor}\label{thm:nonexistencebdstronguniversal}
Let $(X,T)$ be a dynamical system such that no finite subset of $X$ is dense \textup{(}i.e., $X$ has infinite density character\textup{)}. Then $T$ is not $\mathsf{B}$-strong universal. 
\end{cor}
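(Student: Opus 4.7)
The plan is to obtain this statement as a short corollary of Theorem~\ref{thm:nonexistencestronglyuniversal}, by observing that a $\mathsf{B}$-strong universal point of a dynamical system is automatically $\mathsf{B}$-strong recurrent, hence periodic, hence has a finite orbit — which collides with the density character hypothesis. Accordingly, I would argue by contradiction and assume that there exists a $\mathsf{B}$-strong universal point $x \in X$, i.e., $\Lambda_{\mathcal{T}x}(\mathsf{B}) = X$.

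First, taking $y = x$ in the defining condition, we get $x \in \Lambda_{\mathcal{T}x}(\mathsf{B})$, so $x$ is $\mathsf{B}$-strong recurrent (this is the general remark recorded in the paragraph following Definition~\ref{defi:Istrongrecurrent}: $\mathsf{I}$-strong universality implies $\mathsf{I}$-strong recurrence in the self-map setting). Applying Theorem~\ref{thm:nonexistencestronglyuniversal} to $x$, we conclude that $x$ must be periodic. In particular, the orbit $\mathrm{orb}(x,T) = \{x, Tx, \ldots, T^{p-1}x\}$ is a finite subset of $X$.

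Next, I would exploit the trivial inclusion $\Lambda_{\mathcal{T}x}(\mathsf{B}) \subseteq \overline{\mathrm{orb}(x,T)}$: any limit of a subsequence of $(T^n x : n \in \omega)$ along an $\mathsf{B}$-positive set of indexes obviously belongs to the closure of the orbit. Combined with $\Lambda_{\mathcal{T}x}(\mathsf{B}) = X$, this yields $X = \overline{\mathrm{orb}(x,T)}$, so the finite set $\mathrm{orb}(x,T)$ is dense in $X$ — contradicting the standing assumption that no finite subset of $X$ is dense.

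There is essentially no obstacle to overcome here: the entire work is carried out by Theorem~\ref{thm:nonexistencestronglyuniversal}, and the remaining glue consists only of the implication ``strong universal $\Rightarrow$ strong recurrent'' for self-maps and the trivial observation that the orbit of a periodic point is finite. The only point worth double-checking is that the ambient space is playing the role of both domain and codomain in Definition~\ref{defi:strongIuniversal}, which is the case since $(X,T)$ is a dynamical system in the sense of this paper.
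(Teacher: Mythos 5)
Your proposal is correct and follows essentially the same route as the paper: both deduce $\mathsf{B}$-strong recurrence from $\mathsf{B}$-strong universality, invoke Theorem \ref{thm:nonexistencestronglyuniversal} to link this with periodicity, and use the infinite-density-character hypothesis to rule out a dense finite orbit. The only difference is the order in which the contradiction is assembled (the paper first excludes periodicity, you first derive it), which is immaterial.
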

\begin{proof}
Let us suppose for the sake of contradiction that $x \in X$ is $\mathsf{B}$-strong universal. 
Since finite subsets of $X$ are not dense, then $x$ is not periodic. In addition, $x$ is certainly $\mathsf{B}$-strong recurrent. The conclusion follows by Theorem \ref{thm:nonexistencestronglyuniversal}. 
\end{proof}

In particular, since $\mathsf{B}\subseteq \mathsf{Z}$, we get:
\begin{cor}\label{cor:noZstronguniversal}
Let $(X,T)$ be a dynamical where $X$ is a nonzero topological vector space. Then $T$ is not $\mathsf{Z}$-strong universal. 
\end{cor}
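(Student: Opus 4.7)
The plan is to reduce the claim to Corollary \ref{thm:nonexistencebdstronguniversal} by exploiting the inclusion $\mathsf{B}\subseteq \mathsf{Z}$ stated in the preliminaries.

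First, I would observe that $\mathsf{B}\subseteq \mathsf{Z}$ passes to complements as $\mathsf{Z}^+\subseteq \mathsf{B}^+$. Hence, if $x \in X$ is $\mathsf{Z}$-strong universal, then for every $y \in X$ there exists a subsequence $(T^{n_k}x)$ convergent to $y$ with $\{n_k: k \in \omega\}\in \mathsf{Z}^+\subseteq \mathsf{B}^+$; thus $x$ is also $\mathsf{B}$-strong universal. In other words, $\mathsf{Z}$-strong universality of $T$ implies $\mathsf{B}$-strong universality.

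Second, I would verify the hypothesis of Corollary \ref{thm:nonexistencebdstronguniversal}, namely that no finite subset of $X$ is dense. Since $X$ is a nonzero (Hausdorff) topological vector space, picking any $v \in X\setminus \{0\}$ the scalar multiples $\{\lambda v: \lambda \in \mathbb{K}\}$ are pairwise distinct, so $X$ is infinite; as finite subsets of a Hausdorff space are closed, none of them can be dense in the infinite space $X$. Consequently, Corollary \ref{thm:nonexistencebdstronguniversal} applies and gives that $T$ is not $\mathsf{B}$-strong universal, hence (by the first step) not $\mathsf{Z}$-strong universal.

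There is essentially no obstacle: the argument is a direct concatenation of (i) the trivial monotonicity of strong universality in the ideal parameter, and (ii) the infinite-density-character condition, which is immediate in any nonzero Hausdorff topological vector space. The content of the statement is entirely carried by Corollary \ref{thm:nonexistencebdstronguniversal}, which in turn rests on the non-periodicity argument of Theorem \ref{thm:nonexistencestronglyuniversal}.
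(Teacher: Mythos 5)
Your proof is correct and follows exactly the route the paper takes: the paper derives this corollary in one line from Corollary \ref{thm:nonexistencebdstronguniversal} via the inclusion $\mathsf{B}\subseteq\mathsf{Z}$ (equivalently $\mathsf{Z}^+\subseteq\mathsf{B}^+$), with the infinite-density-character hypothesis being automatic for a nonzero (Hausdorff) topological vector space. You have merely made explicit the two steps the paper leaves implicit, including the standard convention that topological vector spaces are Hausdorff.
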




\section{Further structural results and applications}
\label{sec:applications}


\subsection{Dense supply of null orbits} We start by showing that, if $T$ is additive on a topological group $X$ and there exists a large set of points with null orbits, then for all $x \in X$ and all nonempty open sets $U,V\subseteq X$ there exists $y \in V$ such that the return set $N(y,U)$ contains $N(x,U_0)$ modulo finite sets, for a suitable $U_0\subseteq U$.

\begin{prop}\label{prop:endomorphism}
Let $(X,T)$ be a dynamical system, where $X$ is a topological group and $T$ is a homomorphism. Suppose also that there exists a dense $D\subseteq X$ such that $\lim_nT^nz=0$ for all $z \in D$. 

Then, for all nonempty open sets $U\subseteq X$ and for all points $u \in U$, there exists an open neighborhood $U_0\subseteq U$ of $u$ such that, for all points $x \in X$ and all nonempty open sets $V\subseteq X$, there exist $y \in V$ and a finite $F\in \mathrm{Fin}$ for which 
$$
N(x,U_0)\setminus F\subseteq N(y,U).
$$
\end{prop}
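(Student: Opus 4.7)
The plan is to exploit the additive structure: since $T$ is a homomorphism, $T^n(x+z) = T^nx + T^nz$, so if I choose $y$ in the form $y = x + z$ with $z \in D$, the orbit of $y$ is a small perturbation of the orbit of $x$ (with $T^nz \to 0$). The strategy is thus to pick $U_0$ so narrow around $u$ that adding any sufficiently small element of $X$ keeps us inside $U$, and then to pick the element $z \in D$ so that it not only produces eventually small perturbations under $T$ but also lands $y = x + z$ in the prescribed open set $V$.

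First, I would use continuity of the group operation $(a,b)\mapsto a+b$ at the point $(u,0)$: since $u + 0 = u \in U$, there exist an open neighborhood $U_0$ of $u$ with $U_0 \subseteq U$, and an open neighborhood $W$ of the identity $0$, such that $U_0 + W \subseteq U$. This choice of $U_0$ depends only on $U$ and $u$, as required in the statement.

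Next, given $x \in X$ and a nonempty open set $V \subseteq X$, the translate $V - x$ is nonempty open, so by density of $D$ I can pick $z \in D \cap (V - x)$. Set $y := x + z \in V$. Since $z \in D$ I have $\lim_n T^n z = 0$, so there exists $N \in \omega$ with $T^n z \in W$ for every $n \geq N$. Then for every $n \in N(x,U_0)$ with $n \geq N$, the additivity of $T^n$ gives
$$
T^n y = T^n x + T^n z \in U_0 + W \subseteq U,
$$
so $n \in N(y,U)$. Taking $F := N(x, U_0) \cap [0,N) \in \mathrm{Fin}$ yields $N(x,U_0)\setminus F \subseteq N(y,U)$.

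There is no real obstacle here; the proof boils down to one continuity estimate and one application of density. The only point that requires a bit of care is to make the choice of $U_0$ independent of $x$ and $V$ (it is chosen once at the beginning, before $x$ and $V$ are introduced), and to ensure that $y$ genuinely lies in $V$, which is why $z$ must be taken from $D \cap (V-x)$ rather than from $D$ alone.
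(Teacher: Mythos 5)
Your proof is correct and follows essentially the same route as the paper: choose $U_0$ and a neighborhood $W$ of $0$ with $U_0+W\subseteq U$ (the paper takes $U_0=u+W_0$ with $u+W_0+W_0\subseteq U$), pick $z\in D$ landing $y=x+z$ in $V$, and use $T^ny=T^nx+T^nz$ together with $T^nz\to 0$. No issues.
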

\begin{proof}
Fix nonempty open sets $U,V \subseteq X$ and two points $x \in X$ and $u \in U$. 
It is known by standard properties of topological groups that there exists a nonempty open neighborhood $W_0\subseteq X$ of $0$ with the property that 
$$
u+W_0+W_0\subseteq U. 
$$
see e.g. \cite[Chapter 3]{MR0000265}. Notice that, since $D$ is dense, it is possible to pick a point $z \in D\cap (-x+V)$. To complete the proof, it is enough to show that $U_0:=u+W_0$ (so that $u \in U_0\subseteq U$) and $y:=x+z$ satisfy the required property for some finite $F$, i.e., $T^ny \in V$ for all sufficiently large integers $n$ such that $T^nx \in U_0$. For, it follows by construction that $y \in x+(-x+V)=V$ 
and, in addition, 
$$
T^ny=T^nx+T^nz \in U_0+W_0=(u+W_0)+W_0\subseteq U
$$
for all $n \in N(x,U_0)$ such that $T^nz \in W_0$. 
\end{proof}


The following corollary is immediate.
\begin{cor}
Let $(X,T)$ and $D$ as in Proposition \ref{prop:endomorphism}, and let $\mathsf{I}$ be an ideal on $\omega$. Let also $U\subseteq X$ be an open set containing an $\mathsf{I}$-recurrent point. 
Then, for each nonempty open set $V\subseteq X$, there exists $y \in V$ such that $N(y,U) \in \mathsf{I}^+$. 
\end{cor}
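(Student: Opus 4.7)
The plan is to reduce the claim directly to Proposition \ref{prop:endomorphism} applied at a cleverly chosen base point. Namely, let $u \in U$ be the $\mathsf{I}$-recurrent point whose existence is postulated in the hypothesis. I would apply Proposition \ref{prop:endomorphism} to the open set $U$ and to this distinguished point $u \in U$ (treating $u$ both as the point ``$u$'' in $U$ and as the point ``$x$'' of the proposition, since there is no requirement that these be different). This produces an open neighborhood $U_0 \subseteq U$ with $u \in U_0$ such that, for every nonempty open $V \subseteq X$, one can find a point $y \in V$ and a finite set $F \in \mathrm{Fin}$ with
\[
N(u, U_0) \setminus F \;\subseteq\; N(y, U).
\]

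Next I would invoke the hypothesis that $u$ is $\mathsf{I}$-recurrent: since $U_0$ is an open neighborhood of $u$, the definition of $\mathsf{I}$-recurrence (Definition \ref{defi:Irecurrent}) gives $N(u, U_0) \in \mathsf{I}^+$. Because $\mathrm{Fin} \subseteq \mathsf{I}$ by convention, removing the finite set $F$ preserves $\mathsf{I}$-positivity, so $N(u, U_0) \setminus F \in \mathsf{I}^+$. Finally, using that $\mathsf{I}$ is closed under subsets (equivalently, $\mathsf{I}^+$ is upward closed under inclusion), the containment above yields $N(y, U) \in \mathsf{I}^+$, which is exactly what is required.

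There is essentially no obstacle here: the corollary is a one-line consequence, with the only mild subtlety being the observation that Proposition \ref{prop:endomorphism} allows the auxiliary point $x$ in its statement to coincide with the chosen point $u \in U$, so that the $\mathsf{I}$-recurrence of $u$ can be leveraged to guarantee $N(u,U_0) \in \mathsf{I}^+$.
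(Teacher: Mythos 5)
Your argument is correct and coincides with the paper's own proof, which likewise picks the $\mathsf{I}$-recurrent point $z \in U$ and sets $x=u=z$ in Proposition \ref{prop:endomorphism}, then uses $N(z,U_0)\in\mathsf{I}^+$ together with invariance under finite modifications and upward closure of $\mathsf{I}^+$. Nothing to add.
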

\begin{proof}
Pick a point $z \in U \cap \mathrm{Rec}_T(\mathsf{I})$ and set $x=u=z$ in Proposition \ref{prop:endomorphism}.
\end{proof}

Another simple corollary extends the known result that chaotic weighted shifts on Fr\'{e}chet sequence spaces $X$ with an unconditional basis are necessarily frequently hypercyclic (that is, there exists $x \in X$ such that $N(x,U)$ has positive lower asymptotic density for each nonempty open set $U\subseteq X$):
\begin{cor}\label{cor:chaoticimpliesfrequently}
Let $(X,T)$ and $D$ as in Proposition \ref{prop:endomorphism}, and suppose that there is a dense set $P\subseteq X$ of periodic points. 

Then, for each nonempty open set $U\subseteq X$, there exists a nonzero $k \in \omega$ such that, for all nonempty open sets $V\subseteq X$, there exist $y \in V$ and $F \in \mathrm{Fin}$ for which 
$$
(k\cdot \omega) \setminus F \subseteq N(y,U).
$$
\end{cor}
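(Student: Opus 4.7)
The plan is to combine the density of periodic points with Proposition \ref{prop:endomorphism} in essentially one step.

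First, I would fix a nonempty open set $U\subseteq X$. Since $P$ is dense in $X$, I can choose a periodic point $p \in U \cap P$. Let $k \geq 1$ be a period of $p$, so that $T^{k}p = p$ and consequently $T^{nk}p = p$ for every $n \in \omega$. In particular, $k\cdot \omega \subseteq N(p,W)$ for every open neighborhood $W$ of $p$.

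Next, I would apply Proposition \ref{prop:endomorphism} with the choice $x = u = p$ (note that $p \in U$). This yields an open neighborhood $U_0 \subseteq U$ of $p$ such that, for every nonempty open set $V\subseteq X$, there exist $y \in V$ and $F \in \mathrm{Fin}$ with
$$
N(p,U_0)\setminus F \subseteq N(y,U).
$$
Since $U_0$ is an open neighborhood of the $k$-periodic point $p$, the observation of the first paragraph gives $k\cdot \omega \subseteq N(p,U_0)$. Combining these two inclusions yields $(k\cdot\omega)\setminus F \subseteq N(y,U)$, which is the desired conclusion.

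There is essentially no obstacle here: the work is entirely packaged inside Proposition \ref{prop:endomorphism}. The only subtlety is making the choice of neighborhood $U_0$ before (and independently of) $V$, so that the integer $k$ depends only on $U$, as the statement requires; this is automatic from the order of quantifiers in Proposition \ref{prop:endomorphism}, since $U_0$ is produced from $U$ and the base point $u = p$ alone.
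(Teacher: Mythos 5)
Your proof is correct and follows essentially the same route as the paper: pick a periodic point $p\in U\cap P$ of period $k$, apply Proposition \ref{prop:endomorphism} with $x=u=p$ to get $U_0$, and use $k\cdot\omega\subseteq N(p,U_0)$. The remark about the quantifier order (so that $k$ depends only on $U$) is exactly the point the paper relies on as well.
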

\begin{proof}
Let $U\subseteq X$ be a nonempty open set and fix a periodic point $z \in P\cap U$. Denoting with $k$ its period and setting $x=u=z$ in Proposition \ref{prop:endomorphism}, there exists a neighborhood $U_0$ of $z$ such that, for each nonempty open $V\subseteq X$, there exists $y \in V$ for which $N(y,U)$ contains $N(z,U_0)$, up to some finite set. The claim follows since $k\cdot \omega \subseteq 
N(z,U_0)$. 
\end{proof}

As another application, we obtain an extension of \cite[Theorem 8.5]{MR4489276} and \cite[Corollary 5.20]{MR4238631}, providing a sufficient condition for the equivalence between $\mathsf{I}$-recurrence and $\mathsf{I}$-universality. This result has to be confronted with Theorem \ref{thm:upperfrequentcharacterization}, where one of the equivalent conditions is that the set of $\mathsf{I}$-recurrent points is not meager. 
\begin{thm}\label{thm:IrecurrentIhypercyclic}
Let $(X,T)$ be a dynamical system, where $X$ is a Baire second countable group and $T$ is a homomorphism. Let $\mathsf{I}$ be a strongly-right-translation invariant analytic $P$-ideal and suppose that there exists a dense $D\subseteq X$ such that $\lim_nT^nz=0$ for all $z \in D$. 
Then $T$ is $\mathsf{I}$-hypercyclic if and only if it is $\mathsf{I}$-recurrent. 
\end{thm}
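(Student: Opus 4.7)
The forward direction is trivial: every $\mathsf{I}$-universal point is automatically $\mathsf{I}$-recurrent (it is an $\mathsf{I}$-cluster point of \emph{every} $y \in X$, including itself), and the set of $\mathsf{I}$-universal points is dense by Proposition \ref{prop:denseIuniversal}. So the entire content of the theorem lies in the converse: assuming $\mathrm{Rec}_T(\mathsf{I})$ is dense in $X$, we must show that $T$ is $\mathsf{I}$-universal.

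My plan is to verify condition (iii) of the Birkhoff-type criterion in Proposition \ref{prop:birkoff}. Since $\mathsf{I}$ is a strongly-right-translation invariant analytic $P$-ideal, Theorem \ref{thm:analyticPidealupper} guarantees that $\mathsf{I}^+$ is an upper Furstenberg family, so Proposition \ref{prop:birkoff} applies. Fix a lscsm $\varphi$ with $\mathsf{I} = \mathrm{Exh}(\varphi)$. Given a nonempty open set $U \subseteq X$, I want to produce $\kappa > 0$ such that every nonempty open set $V \subseteq X$ contains a point $y$ with $\|N(y,U)\|_\varphi \geq \kappa$.

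The production of $\kappa$ proceeds as follows. By density of $\mathrm{Rec}_T(\mathsf{I})$, pick an $\mathsf{I}$-recurrent point $u \in U$. Apply Proposition \ref{prop:endomorphism} with $x = u$ to obtain an open neighborhood $U_0 \subseteq U$ of $u$ with the property that, for each nonempty open $V \subseteq X$, there exist $y \in V$ and a finite set $F$ with $N(u,U_0) \setminus F \subseteq N(y,U)$. Since $u \in U_0$ and $u$ is $\mathsf{I}$-recurrent, $N(u,U_0) \in \mathsf{I}^+$, so $\kappa := \|N(u,U_0)\|_\varphi > 0$. Using that $\|\cdot\|_\varphi$ is monotone and invariant modulo finite sets, the inclusion above yields
\[
\|N(y,U)\|_\varphi \;\geq\; \|N(u,U_0)\setminus F\|_\varphi \;=\; \|N(u,U_0)\|_\varphi \;=\; \kappa,
\]
which is exactly condition (iii) of Proposition \ref{prop:birkoff}. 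Hence $T$ is $\mathsf{I}$-universal.

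The only non-routine step is harnessing Proposition \ref{prop:endomorphism} in precisely the right form — namely, recognising that the $\mathsf{I}$-recurrent point $u$ should simultaneously play the role of both the base point $x$ and the distinguished point $u \in U$ in that proposition, so that $N(u,U_0)$ is $\mathsf{I}$-positive and gets transported (up to a finite modification) into $N(y,U)$ for $y \in V$ arbitrary. Everything else is bookkeeping with $\|\cdot\|_\varphi$ and invocation of the already established machinery.
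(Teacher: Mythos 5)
Your proof is correct and is essentially identical to the paper's: the paper likewise picks an $\mathsf{I}$-recurrent point $z\in U$, sets $x=u=z$ in Proposition \ref{prop:endomorphism} to transport $N(z,U_0)$ (modulo a finite set) into $N(y,U)$ for $y$ in an arbitrary nonempty open $V$, takes $\kappa:=\|N(z,U_0)\|_\varphi>0$, and concludes via condition (iii) of Proposition \ref{prop:birkoff}. No gaps.
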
 
\begin{proof}
Suppose that $T$ is $\mathsf{I}$-recurrent and let $\varphi$ be a lscsm such that $\mathsf{I}=\mathrm{Exh}(\varphi)$ as in Definition \ref{defi:stronginvariant}. Pick a nonempty open set $U\subseteq X$. Hence we can find an $\mathsf{I}$-recurrent point $z \in U$. Now, set $x=u=z$ in Proposition \ref{prop:endomorphism}. It follows that there exists a neighborhood $U_0\subseteq U$ of $z$ such that, for all nonempty open sets $V \subseteq X$ there exist $y \in V$ and $F\in \mathrm{Fin}$ for which $N(z,U_0)\setminus F\subseteq N(y,U)$. Define $\kappa:=\|N(z,U_0)\|_\varphi>0$. Recalling that $\|\cdot\|_\varphi$ is monotone and invariant modulo finite sets, for all nonempty open sets $V \subseteq X$ there exists $y \in V$ such that $\|N(y,U)\|_\varphi \ge \kappa$. 
Thanks to Proposition \ref{prop:birkoff}, we conclude that $T$ is $\mathsf{I}$-universal. The converse is obvious. 
\end{proof}

An additional equivalent condition will be obtained later in Theorem \ref{thm:zerooneorbitlkfj}.

Even if Theorem \ref{thm:IrecurrentIhypercyclic} does not contain the case $\mathsf{I}=\mathsf{B}$ (since $\mathsf{B}$ is not a $P$-ideal), we can still recover it with the aid of previous results:
\begin{cor}\label{cor:BrecurrenceBuniversality}
Let $(X,T)$ and $D$ as in Theorem \ref{thm:IrecurrentIhypercyclic}. Then $T$ is $\mathsf{B}$-universal if and only if it is $\mathsf{B}$-recurrent.
\end{cor}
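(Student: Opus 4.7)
The forward direction is immediate: any $\mathsf{B}$-universal point is trivially $\mathsf{B}$-recurrent, so $\mathrm{Univ}_{T}(\mathsf{B}) \subseteq \mathrm{Rec}_{T}(\mathsf{B})$, and density of $\mathrm{Rec}_{T}(\mathsf{B})$ follows. So only the converse requires work.

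The plan is to mimic the proof of Theorem \ref{thm:IrecurrentIhypercyclic}, but to bypass Proposition \ref{prop:birkoff} (which is unavailable because $\mathsf{B}$ is not a $P$-ideal) by routing through Theorem \ref{thm:reiterativeequalhypercyclic}. That theorem gives the equivalence of $\mathsf{B}$-universality with the conjunction of universality and $\mathsf{B}$-recurrence. Since $\mathsf{B}$-recurrence is assumed, it is enough to prove that $T$ is universal, i.e.\ topologically transitive (here the Baire second countable hypothesis lets us appeal to the classical Birkhoff transitivity theorem).

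To verify transitivity, fix nonempty open sets $U, V \subseteq X$. Since $\mathrm{Rec}_T(\mathsf{B})$ is dense in $X$, pick a $\mathsf{B}$-recurrent point $z \in V$. Apply Proposition \ref{prop:endomorphism} with $V$ playing the role of the open set around $u = z$, with $x = z$, and with $U$ playing the role of the second open set. This supplies a neighborhood $V_0 \subseteq V$ of $z$, a point $y \in U$, and a finite $F \in \mathrm{Fin}$ with
$$
N(z, V_0) \setminus F \subseteq N(y, V).
$$
Because $z$ is $\mathsf{B}$-recurrent and $V_0$ is a neighborhood of $z$, we have $N(z, V_0) \in \mathsf{B}^+$; since $\mathsf{bd}^\star$ is invariant under finite modifications, $N(y, V) \in \mathsf{B}^+$ as well, in particular nonempty. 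Hence $N(U, V) \neq \emptyset$, establishing topological transitivity.

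The main obstacle is conceptual rather than technical: one must notice that the piece of Theorem \ref{thm:IrecurrentIhypercyclic} that breaks for $\mathsf{B}$ is only the final appeal to Proposition \ref{prop:birkoff}, which required the quantitative control $\|\cdot\|_\varphi$ available for analytic $P$-ideals. Proposition \ref{prop:endomorphism} itself makes no such demand and still produces a point $y$ with a large return set, so combining it with the purely qualitative Birkhoff transitivity theorem together with Theorem \ref{thm:reiterativeequalhypercyclic} gives the desired conclusion without ever invoking the $P$-ideal property.
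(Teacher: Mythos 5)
Your proof is correct and follows the same overall route as the paper: reduce the nontrivial direction to showing that $T$ is universal, and then invoke the equivalence \ref{item:2Buniv} $\Longleftrightarrow$ \ref{item:4Buniv} of Theorem \ref{thm:reiterativeequalhypercyclic}. The only divergence is in how universality is obtained: the paper observes that $\mathsf{B}$-recurrence implies ordinary recurrence and cites Theorem \ref{thm:IrecurrentIhypercyclic} directly (which does apply, since $\mathrm{Fin}$ is itself a strongly-right-translation invariant analytic $P$-ideal, e.g.\ $\mathrm{Fin}=\mathrm{Exh}(\varphi)$ with $\varphi(S)=\min\{1,|S|\}$, so no detour around Proposition \ref{prop:birkoff} is actually needed), whereas you re-derive topological transitivity from Proposition \ref{prop:endomorphism} together with the classical Birkhoff transitivity theorem --- a correct, slightly longer unfolding of the same machinery.
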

\begin{proof}
Suppose that $T$ is $\mathsf{B}$-recurrent. Then it is, of course, recurrent and, thanks to Theorem \ref{thm:IrecurrentIhypercyclic}, it has to be universal (of course, $\mathrm{Fin}$ is a strongly-right-translation invariant analytic $P$-ideal). It follows by the equivalence \ref{item:2Buniv} $\Longleftrightarrow$ \ref{item:4Buniv} in Theorem \ref{thm:reiterativeequalhypercyclic} that $T$ is $\mathsf{B}$-universal. The converse is obvious. 
\end{proof}

For the next results, we will need the following characterization of $\mathsf{I}$-limit points:
\begin{thm}\label{thm:mareklimit}
Let $\bm{x}$ be a sequence taking values in a first countable  space $X$. Also, let $\mathsf{I}=\mathrm{Exh}(\varphi)$ be an analytic P-ideal on $\omega$ as in \eqref{eq:characterizationanalPideal} and, for each $\eta \in X$, denote by $(U_{\eta,k}: k \in \omega)$ a decreasing local base at $\eta$. 

Define $\mathfrak{u}(\eta):= \lim\nolimits_{k}\|\{n \in \omega: x_n \in U_{\eta,k}\}\|_\varphi$ for each $\eta \in X$.  
Then 
$$
\Lambda_{\bm{x}}(\mathsf{I})=\left\{\eta\in X:  
\mathfrak{u}(\eta) 
>0\right\}
$$ 
and the map 
$\mathfrak{u}$ 
is lower semicontinuous. 

In addition, if $\eta \in \Lambda_{\bm{x}}(\mathsf{I})$, then $\lim_{n \in A}x_n=\eta$ for some $A\subseteq \omega$ such that $\|A\|_\varphi \ge \mathfrak{u}(\eta)$. 
\end{thm}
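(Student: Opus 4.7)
The plan is to introduce $A_{\eta,k} := \{n \in \omega : x_n \in U_{\eta,k}\}$ and to observe that, since $(U_{\eta,k})_k$ is decreasing, so is $(A_{\eta,k})_k$; by monotonicity of $\|\cdot\|_\varphi$, the limit $\mathfrak{u}(\eta) := \lim_k \|A_{\eta,k}\|_\varphi$ therefore exists and equals $\inf_k \|A_{\eta,k}\|_\varphi$. The easy inclusion $\Lambda_{\bm{x}}(\mathsf{I}) \subseteq \{\mathfrak{u} > 0\}$ is then immediate: if $A \in \mathsf{I}^+$ and $\lim_{n \in A} x_n = \eta$, then $A \setminus F_k \subseteq A_{\eta,k}$ for some $F_k \in \mathrm{Fin}$ and every $k$, so by invariance of $\|\cdot\|_\varphi$ modulo finite sets and its monotonicity, $\|A\|_\varphi = \|A \setminus F_k\|_\varphi \leq \|A_{\eta,k}\|_\varphi$, giving $0 < \|A\|_\varphi \leq \mathfrak{u}(\eta)$.

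For the semicontinuity, I would argue directly on level sets: suppose $\mathfrak{u}(\eta) < c$ and pick $k_0$ with $\|A_{\eta,k_0}\|_\varphi < c$. For every $\eta' \in U_{\eta,k_0}$ the local base $(U_{\eta',j})_j$ admits some index $j$ with $U_{\eta',j} \subseteq U_{\eta,k_0}$, hence $A_{\eta',j} \subseteq A_{\eta,k_0}$ and $\mathfrak{u}(\eta') \leq \|A_{\eta',j}\|_\varphi \leq \|A_{\eta,k_0}\|_\varphi < c$. Thus the level sets $\{\eta : \mathfrak{u}(\eta) < c\}$ are open, which gives the desired semicontinuity property of $\mathfrak{u}$.

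The hard direction $\{\mathfrak{u} > 0\} \subseteq \Lambda_{\bm{x}}(\mathsf{I})$ will be handled together with the sharpened bound $\|A\|_\varphi \geq \mathfrak{u}(\eta)$ via a single diagonal construction. Assume $\mathfrak{u}(\eta) > 0$. By invariance of $\|\cdot\|_\varphi$ under finite modifications and by lower semicontinuity of $\varphi$,
$$
\lim_{N \to \infty} \varphi\bigl(A_{\eta,k} \cap [N_k, N)\bigr) = \varphi\bigl(A_{\eta,k} \setminus [0, N_k)\bigr) \geq \|A_{\eta,k}\|_\varphi \geq \mathfrak{u}(\eta),
$$
so one may inductively choose $0 = N_0 < N_1 < N_2 < \cdots$ satisfying $\varphi\bigl(A_{\eta,k} \cap [N_k, N_{k+1})\bigr) \geq \mathfrak{u}(\eta) - 2^{-k}$ for every $k$. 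Set
$$A := \bigcup\nolimits_k \bigl(A_{\eta,k} \cap [N_k, N_{k+1})\bigr).$$
Since $(A_{\eta,k})_k$ is decreasing, $A \setminus [0, N_K) \subseteq \bigcup_{k \geq K} A_{\eta,k} \subseteq A_{\eta,K}$ for every $K$, which forces $\lim_{n \in A} x_n = \eta$. For the norm bound I would note that, whenever $N \in [N_K, N_{K+1})$, the set $A \setminus [0, N]$ contains the block $A_{\eta,K+1} \cap [N_{K+1}, N_{K+2})$, so $\varphi(A \setminus [0, N]) \geq \mathfrak{u}(\eta) - 2^{-(K+1)}$. Since this bound persists uniformly for all $N \geq N_K$ with $K$ arbitrarily large and $\|A\|_\varphi = \inf_N \varphi(A \setminus [0, N])$ by monotonicity, one concludes $\|A\|_\varphi \geq \mathfrak{u}(\eta)$, and in particular $A \in \mathsf{I}^+$.

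The main obstacle is the calibration of the blocks $[N_k, N_{k+1})$: each must be long enough that its trace on $A_{\eta,k}$ carries nearly all of the $\varphi$-mass of the tail $A_{\eta,k} \setminus [0, N_k)$, yet the resulting union must be assembled from pieces living in progressively smaller neighborhoods of $\eta$ in order to guarantee convergence of $(x_n)_{n \in A}$. The lower semicontinuity of $\varphi$ together with the floor $\|A_{\eta,k}\|_\varphi \geq \mathfrak{u}(\eta)$ is exactly what reconciles these competing requirements.
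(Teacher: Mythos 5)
Your argument is correct and, unlike the paper itself, which simply defers to \cite[Theorem 2.2]{MR3883171} \textquotedblleft and its proof,\textquotedblright\, it is self-contained; the block construction (choosing $N_{k+1}$ so that $\varphi(A_{\eta,k}\cap[N_k,N_{k+1}))\ge\mathfrak{u}(\eta)-2^{-k}$ and taking the union of the blocks) is essentially the argument of the cited reference. The easy inclusion, the convergence $\lim_{n\in A}x_n=\eta$ via $A\setminus[0,N_K)\subseteq A_{\eta,K}$, and the verification that $\|A\|_\varphi\ge\mathfrak{u}(\eta)$ through $\|A\|_\varphi=\inf_N\varphi(A\setminus[0,N])=\lim_N\varphi(A\setminus[0,N])$ are all sound.

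One point does need attention. What you prove about $\mathfrak{u}$ is that the sublevel sets $\{\eta:\mathfrak{u}(\eta)<c\}$ are open, which is \emph{upper} semicontinuity, not the lower semicontinuity asserted in the statement, so you should not call it \textquotedblleft the desired semicontinuity property\textquotedblright\, without comment. In fact upper semicontinuity is the only true assertion here: for the constant sequence $x_n=0$ in $X=\mathbb{R}$ one has $\mathfrak{u}(0)=\|\omega\|_\varphi>0$ and $\mathfrak{u}(\eta)=0$ for $\eta\neq 0$, so $\{\mathfrak{u}>c\}$ equals the non-open set $\{0\}$ for small $c>0$. Upper semicontinuity is also what is needed to conclude that $\Lambda_{\bm{x}}(\mathsf{I})=\bigcup_m\{\eta:\mathfrak{u}(\eta)\ge 1/m\}$ is an $F_\sigma$-set, a fact invoked in the introduction of the paper. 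So the word \textquotedblleft lower\textquotedblright\, in the theorem is a misprint; your level-set argument proves the intended (correct) statement, but you should label it as upper semicontinuity and flag the discrepancy rather than silently matching the stated claim.
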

\begin{proof}
See 
\cite[Theorem 2.2]{MR3883171} and its proof. 
\end{proof}

We are now able to show that the set of points with a prescribed $\mathsf{I}$-limit point is either empty or comeager.

\begin{thm}\label{thm:denseIimitpoints}
Let $(X,T)$ be a dynamical system, where $X$ is a completely metrizable group and $T$ is a homomorphism. Let $\mathsf{I}$ be an analytic $P$-ideal and suppose that there exists a dense $D\subseteq X$ such that $\lim_nT^nz=0$ for all $z \in D$. 
Then, for all $x_0 \in X$, 
\begin{equation}\label{eq:claimedsetnicesetsd}
\{y \in X: x_0 \text{ is an }\mathsf{I}\text{-limit point of }\mathrm{orb}(y,T)\}
\end{equation}
is either empty or comeager. 
\end{thm}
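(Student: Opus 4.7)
The plan is to exhibit, under the assumption that \eqref{eq:claimedsetnicesetsd} is nonempty, a dense $G_\delta$ subset of it; the Baire property of the completely metrizable space $X$ will then give comeagerness. I would pick $y_\star$ in \eqref{eq:claimedsetnicesetsd}, let $\varphi$ be a lscsm with $\mathsf{I}=\mathrm{Exh}(\varphi)$, and fix a decreasing local base $(U_{x_0,k}:k\in\omega)$ of open neighborhoods at $x_0$. Theorem \ref{thm:mareklimit} provides $\lim_{k}\|N(y_\star,U_{x_0,k})\|_\varphi>0$; since this sequence decreases in $k$, one can fix $\kappa>0$ with $\|N(y_\star,U_{x_0,k})\|_\varphi>\kappa$ for every $k$, and monotonicity yields the same lower bound for $\|N(y_\star,W)\|_\varphi$ whenever $W$ is an open neighborhood of $x_0$. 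Fix $t\in\omega$ with $2^{-t}<\kappa$ and set
\[
G_k:=\{y\in X:\|N(y,U_{x_0,k})\|_\varphi>2^{-t}\},\qquad B_t:=\bigcap_{k\in\omega}G_k.
\]
If each $G_k$ is a dense $G_\delta$ subset of $X$, then $B_t$ is dense $G_\delta$ and hence comeager, and a second application of Theorem \ref{thm:mareklimit} shows directly that $B_t$ is contained in \eqref{eq:claimedsetnicesetsd}.

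The $G_\delta$ assertion would follow as in Theorem \ref{thm:clusterpointanalyticPideal}. Rewriting $\|\cdot\|_\varphi$ as an infimum over cofinite restrictions and using the lower semicontinuity of $\varphi$, one has
\[
G_k=\bigcap_{m\in\omega}\bigcup_{b\in\omega}\left\{y\in X:\varphi(\{n\in(m,b]:T^n y\in U_{x_0,k}\})>2^{-t}\right\}.
\]
For each fixed $m,b,k$, the continuity of every $T^n$ and the openness of $U_{x_0,k}$ force the finite set $\{n\in(m,b]:T^n y\in U_{x_0,k}\}$ to only grow under small perturbations of $y$; monotonicity of $\varphi$ then makes the innermost condition open.

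Density of each $G_k$ is the heart of the plan, and Proposition \ref{prop:endomorphism} is the crucial tool. Given any nonempty open $V\subseteq X$, I would apply the proposition with $u=x_0$ and $U=U_{x_0,k}$ to obtain an open neighborhood $U_0\subseteq U_{x_0,k}$ of $x_0$; then, specializing to $x=y_\star$ and the chosen $V$, extract $y\in V$ and $F\in\mathrm{Fin}$ with $N(y_\star,U_0)\setminus F\subseteq N(y,U_{x_0,k})$. Since $\|\cdot\|_\varphi$ is monotone and invariant modulo finite sets, and $U_0$ is itself an open neighborhood of $x_0$,
\[
\|N(y,U_{x_0,k})\|_\varphi\geq\|N(y_\star,U_0)\|_\varphi>\kappa>2^{-t},
\]
so $y\in V\cap G_k$, establishing density. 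The main obstacle bypassed by this countable decomposition is that each single invocation of Proposition \ref{prop:endomorphism} produces an auxiliary point $y$ that depends on $k$; any naive attempt to build one $y\in V$ witnessing $y\in G_k$ for all $k$ simultaneously would fail, but Baire category in $X$ replaces the required simultaneity with the countable intersection $B_t$.
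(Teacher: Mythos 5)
Your proof is correct, but it takes a genuinely different route from the paper's. The paper establishes comeagerness via the Banach--Mazur game: Player II recursively applies Proposition \ref{prop:endomorphism} inside each $V_m$, extracts finite sets $F_m\subseteq N(y_m,U_m)$ with $\varphi(F_m)\ge\kappa(1-2^{-m})$, shrinks to closed balls on which $T^n$ lands in $U_m$ for all $n\in F_m$, and uses completeness of $X$ to produce a limit point $y$ with $\bigcup_{m\ge k}F_m\subseteq N(y,U_k)$ for every $k$. You instead exhibit a dense $G_\delta$ subset of \eqref{eq:claimedsetnicesetsd} directly, reusing the decomposition of Theorem \ref{thm:clusterpointanalyticPideal} (localized at $x_0$) for the $G_\delta$ structure and invoking Proposition \ref{prop:endomorphism} only once per $k$ for density; the Baire property of $X$ then replaces the recursive bookkeeping of the game. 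Both ingredients check out: the innermost sets are open by continuity of finitely many iterates together with monotonicity of $\varphi$, and the density step correctly transfers the uniform bound via $\|N(y,U_{x_0,k})\|_\varphi\ge\|N(y_\star,U_0)\setminus F\|_\varphi=\|N(y_\star,U_0)\|_\varphi>\kappa$, using that $U_0$ is again an open neighborhood of $x_0$. One cosmetic slip: your displayed identity for $G_k$ is really only a sandwich $\{y:\|N(y,U_{x_0,k})\|_\varphi>2^{-t}\}\subseteq\bigcap_m\bigcup_b\{\ldots\}\subseteq\{y:\|N(y,U_{x_0,k})\|_\varphi\ge 2^{-t}\}$, since an infimum of quantities each exceeding $2^{-t}$ need only be $\ge 2^{-t}$; but working with the middle set throughout is harmless, as it is still $G_\delta$, still contains the dense set $\{y:\|N(y,U_{x_0,k})\|_\varphi>\kappa\}$, and its intersection over $k$ still lands in \eqref{eq:claimedsetnicesetsd} by Theorem \ref{thm:mareklimit} because $2^{-t}>0$. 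Your route also recovers Remark \ref{rmk:strongerclaimdenselimitpoints} after intersecting over a sequence of thresholds increasing to $\lim_k\|N(y_\star,U_{x_0,k})\|_\varphi$, so nothing is lost relative to the game-theoretic argument.
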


\begin{proof}
Let $\varphi$ be a lscsm such that $\mathsf{I}=\mathrm{Exh}(\varphi)$ as in \eqref{eq:characterizationanalPideal}, let $d$ be a compatible metric on $X$. 
Also, fix $x,x_0$ such that $x_0$ is an $\mathsf{I}$-limit point of $\mathrm{orb}(x,T)$ and let $(U_k)$ be a decreasing local base at $x_0$. Thanks to Theorem \ref{thm:mareklimit}, this means that 
$$
\kappa:=
\lim_{k\to \infty} \|N(x,U_k)\|_\varphi>0.
$$

We are going to use the Banach--Mazur game defined as follows: 
Players I and II choose alternatively nonempty open subsets of $X$ as a decreasing chain 
$$
V_0\supseteq W_0 \supseteq V_1 \supseteq W_1\supseteq \cdots, 
$$
where Player I chooses the sets $V_0,V_1,\ldots$; Player II is declared to be the winner of the game if 
\begin{equation}\label{eq:claimgame}
\bigcap\nolimits_{m\in \omega} W_m \cap \, Y\neq \emptyset,
\end{equation}
where $Y$ stands for the set in \eqref{eq:claimedsetnicesetsd}. 
Then Player II has a winning strategy (that is, he is always able to choose suitable sets $W_0, W_1,\ldots$ so that \eqref{eq:claimgame} holds at the end of the game) if and only if $Y$ is a comeager subset of $X$, see \cite[Theorem 8.33]{MR1321597}. 

For, let us describe the winning strategy of Player II. It follows by Proposition \ref{prop:endomorphism} that there exist $y_0 \in V_0$ and $j_0 \in \omega$ such that $N(y_0,U_0)$ contains, up to some finite set, $N(x,U_{j_0})$.
Since $\varphi$ is a lscsm, we obtain
\begin{equation}\label{eq:lscsmconsequence1}
\lim_{n\to \infty}\varphi(N(y_0,U_0)\cap [0,n])\ge \|N(y_0,U_0)\|_\varphi \ge \|N(x,U_{j_0})\|_\varphi \ge \kappa.
\end{equation}
Hence there exists a finite set $F_0\subseteq N(y_0,U_0)$ such that $\varphi(F_0) \ge \kappa/2$. Since $T$ is continuous, there exists an open ball $B(y_0,r_{0})$ with radius $r_{0}\le 2$ such that 
$B(y_0,r_{0})\subseteq V_0$ and $T^n[B(y_0,r_{0})] \subseteq U_{0}$ 
for all $n \in F_{0}$. Hence, define
$$
W_{0}:=B(y_0,r_{0}/2).
$$
It follows that $W_{0}\subseteq \tilde{W}_{0}\subseteq V_0$ and $T^n[\tilde{W}_{0}] \subseteq U_{0}$ 
for all integers $n \in F_{0}$, where $\tilde{W}_{0}$ stands for the closed ball $\{x \in X: d(x,y_0) \le r_{0}/2\}$.

Next, suppose that the open sets $V_0\supseteq W_0 \supseteq V_1 \supseteq W_1\supseteq \cdots \supseteq V_m$ have been chosen, for some integer $m\ge 1$, and proceed similarly as before:
\begin{enumerate}[label={\rm (\roman{*})}]

\item Thanks to Proposition \ref{prop:endomorphism}, fix $y_{m} \in V_{m}$ and an integer $j_{m}>j_{m-1}$ such that $N(y_{m}, U_{m})$ contains, up to some finite set, $N(x, U_{j_{m}})$. 

\item Analogously to \eqref{eq:lscsmconsequence1}, we have
\begin{displaymath}
\begin{split}
\lim_{n\to \infty}\varphi(N(y_{m},U_{m})\cap (\max F_{m-1} ,n])&\ge \|N(y_{m},U_{m})\|_\varphi\\
& \ge \|N(x,U_{j_{m}})\|_\varphi \ge \kappa. 
\end{split}
\end{displaymath}
Hence it is possible to fix a finite set $F_{m} \subseteq N(y_{m},U_{m})$ such that 
$$
\min F_{m}>\max F_{m-1}
\quad \text{ and }\quad 
\varphi(F_{m}) \ge \kappa (1-2^{-m}).
$$

\item By the continuity of $T$, there exists a radius $r_m \le 2^{1-m}$ such that the open ball $B(y_m,r_{m})$ is contained in $V_m$ and, in addition, $T^n[B(y_m,r_{m})] \subseteq U_{m}$ 
for all $n \in F_{m}$. 
Hence, define
$$
W_{m}:=B(y_m,r_{m}/2).
$$
Note that the radius of $W_{m}$ is $\le 2^{-m}$ and that $W_{m}$ is contained in the closed ball $\tilde{W}_{m}:=\{x \in X: d(x,y_m) \le r_{m}/2\}$. This implies that 
\begin{equation}\label{eq:inclusionchainrecursion}
W_{m}\subseteq \tilde{W}_{m}\subseteq V_m 
\quad \text{ and }\quad 
T^n[\tilde{W}_{m}] \subseteq U_{m}
\end{equation}
for all $n \in F_{m}$.
\end{enumerate}

At this point, since the sequence $(W_m)$ is decreasing and $y_m \in W_m$ for all $m \in \omega$, we obtain  $d(y_i,y_j) \le 2^{-\min\{i,j\}}$ for all $i,j \in \omega$, hence the sequence $(y_m)$ is Cauchy. The completeness of $X$ (and of all $\tilde{W}_m$) implies the existence of 
$$
y:=\lim_{m\to \infty} y_m \in X,
$$ 
so that $\{y\}=\bigcap\nolimits_m \tilde{W}_{m}=\bigcap\nolimits_m W_{m}$. In particular, we obtain by \eqref{eq:inclusionchainrecursion} that 
$
T^n y \in U_{m} \subseteq U_k
$ 
for all $m,k \in \omega$ with $m\ge k$ and for all $n \in F_m$. This implies that 
$$
\bigcup\nolimits_{m\ge k}F_m \subseteq N(y,U_k)
$$
for all $k \in \omega$. Since $\varphi$ is a lscsm, we get
\begin{displaymath}
\lim_{k\to \infty}\|N(y,U_k))\|_\varphi \ge \left\|\bigcup\nolimits_{m\ge k}F_m\right \|_\varphi
\ge \limsup_{m\to \infty}\varphi(F_m) \ge \kappa
\end{displaymath}
for all $k \in \omega$, hence $\lim_{k\to \infty}\|N(y,U_k))\|_\varphi>0$. Thanks to Theorem \ref{thm:mareklimit}, $x_0$ is an $\mathsf{I}$-limit point of $\mathrm{orb}(y,T)$, therefore $y \in Y$, which completes the proof. 
\end{proof}

The following corollary is clear from Theorem \ref{thm:denseIimitpoints}:
\begin{cor}\label{cor:emptyordenselinear}
Let $(X,T)$ be a linear dynamical system, where $X$ is a Fr\'{e}chet space. 
Suppose that there exists a dense $D\subseteq X$ such that $\lim_nT^nz=0$ for all $z \in D$. Then, for all $x_0 \in X$, the set 
$$
\{y \in X: \lim\nolimits_{n \in A}T^ny=x_0 \text{ for some }A \in \mathsf{Z}^+\}
$$
is either empty or comeager. 
\end{cor}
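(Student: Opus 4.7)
The plan is to deduce this corollary directly from Theorem \ref{thm:denseIimitpoints} by verifying that all its hypotheses are satisfied in the present setting and that the set under consideration coincides with the one in the theorem's conclusion.

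First, I would record the routine structural checks. A Fréchet space $X$, endowed with its vector addition, is an abelian topological group whose topology is given by a translation-invariant complete metric, so $X$ fits the framework of Theorem \ref{thm:denseIimitpoints}. The operator $T$ is continuous and linear, hence in particular a continuous group homomorphism $X\to X$. The ideal $\mathsf{Z}$ of asymptotic density zero sets is a well-known analytic $P$-ideal, generated as $\mathrm{Exh}(\varphi)$ for a suitable lower semicontinuous submeasure as recalled in Section \ref{sec:furstenberg}. Finally, the hypothesis on $D$ is identical in both statements.

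Second, I would identify the two sets. By the very definition of $\mathsf{I}$-limit point applied to the orbit sequence $\mathrm{orb}(y,T)=(T^ny:n\in\omega)$ with $\mathsf{I}=\mathsf{Z}$, the condition that $x_0$ is a $\mathsf{Z}$-limit point of $\mathrm{orb}(y,T)$ is exactly that there exists $A\subseteq\omega$ with $A\in\mathsf{Z}^+$ (i.e., $\mathsf{d}^\star(A)>0$) such that $\lim_{n\in A}T^ny=x_0$. Hence the set
\[
\{y\in X:\lim\nolimits_{n\in A}T^ny=x_0 \text{ for some }A\in\mathsf{Z}^+\}
\]
coincides with $\{y\in X:x_0\text{ is a }\mathsf{Z}\text{-limit point of }\mathrm{orb}(y,T)\}$, which is precisely the set treated in Theorem \ref{thm:denseIimitpoints} (with $\mathsf{I}=\mathsf{Z}$).

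Third, I would invoke Theorem \ref{thm:denseIimitpoints} to conclude that this set is either empty or comeager, finishing the proof. There is really no substantive obstacle here: the content of the corollary lies entirely in Theorem \ref{thm:denseIimitpoints}, and this step merely specializes that result to the linear Fréchet-space setting with the analytic $P$-ideal $\mathsf{Z}$.
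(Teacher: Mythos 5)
Your proposal is correct and matches the paper's intent exactly: the paper derives this corollary directly from Theorem \ref{thm:denseIimitpoints}, stating only that it is ``clear'' from that theorem, and your write-up simply makes explicit the routine verifications (Fréchet space as completely metrizable group, linearity implying homomorphism, $\mathsf{Z}$ being an analytic $P$-ideal, and the unwinding of the definition of $\mathsf{Z}$-limit point). Nothing further is needed.
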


\begin{rmk}\label{rmk:strongerclaimdenselimitpoints}
The proof of Theorem \ref{thm:denseIimitpoints} shows something stronger: fix a point $x_0 \in X$, and suppose that the set in \eqref{eq:claimedsetnicesetsd} is nonempty. Then there exists a comeager set of points $y \in X$ such that $\lim_k \|N(y,U_k)\|_\varphi \ge \lim_k \|N(x,U_k)\|_\varphi$, where $(U_k)$ is a decreasing local base at $x_0$. 
\end{rmk}

\begin{rmk}\label{rmk:x_0strongperiodic}
Note that, in the statement of Theorem \ref{thm:denseIimitpoints}, $X$ is not assumed to be separable. In particular, it applies to all $\mathsf{I}$-strong recurrent points $x_0$ (hence, also to periodic points, which are $\mathsf{I}$-strong recurrent provided e.g. that $\mathsf{I}$ is left- or right-translation invariant). 
\end{rmk}

The technique used in the proof of Theorem \ref{thm:denseIimitpoints} does not seem to adapt to the $\mathsf{I}$-cluster points analogue. The latter, however, holds if $\mathsf{I}$ is a $F_\sigma$ $P$-ideal:
\begin{cor}\label{cor:denseclusterpoints}
Let $(X,T)$, and $D$ as in Theorem \ref{thm:denseIimitpoints}. Let also $\mathsf{I}$ be a $F_\sigma$ $P$-ideal on $\omega$. %
Then, for all $x_0 \in X$, 
\begin{equation}\label{eq:claimedsetnicesetsd2}
\{y \in X: x_0 \text{ is an }\mathsf{I}\text{-cluster point of }\mathrm{orb}(y,T)\}
\end{equation}
is either empty or comeager. 
\end{cor}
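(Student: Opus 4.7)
The plan is to reduce the statement to Theorem \ref{thm:denseIimitpoints} via the coincidence of $\mathsf{I}$-cluster and $\mathsf{I}$-limit points for $F_\sigma$-ideals on first countable spaces. Since $X$ is completely metrizable, it is first countable. As $\mathsf{I}$ is an $F_\sigma$-ideal, the fact recalled in the proof of Theorem \ref{thm:clusterpointFsigmaideal}(ii) (which is \cite[Theorem 2.3]{MR3883171}) gives that for every sequence $\bm{x}$ taking values in $X$ one has $\Gamma_{\bm{x}}(\mathsf{I})=\Lambda_{\bm{x}}(\mathsf{I})$. Applying this to $\bm{x}=\mathrm{orb}(y,T)$ for each $y \in X$, the set in \eqref{eq:claimedsetnicesetsd2} coincides with the set
$$
\{y \in X: x_0 \text{ is an }\mathsf{I}\text{-limit point of }\mathrm{orb}(y,T)\}
$$
considered in \eqref{eq:claimedsetnicesetsd}.

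Next, observe that every $F_\sigma$ $P$-ideal is an analytic $P$-ideal: $F_\sigma$-subsets of $\mathcal{P}(\omega)$ are in particular analytic, so the hypotheses of Theorem \ref{thm:denseIimitpoints} apply to $\mathsf{I}$. Since $X$ is a completely metrizable group, $T$ is a homomorphism, and $D$ is a dense set with $\lim_n T^n z = 0$ for all $z \in D$, Theorem \ref{thm:denseIimitpoints} yields that the set in \eqref{eq:claimedsetnicesetsd} is either empty or comeager. Combined with the equality of the two sets established above, this gives the desired conclusion for \eqref{eq:claimedsetnicesetsd2}.

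No genuine obstacle arises here, since the proof is a direct assembly of two previously established results; the only point that requires care is checking that the first countability of $X$ (which comes for free from complete metrizability) is what makes the $F_\sigma$-ideal identity $\Gamma_{\bm{x}}(\mathsf{I})=\Lambda_{\bm{x}}(\mathsf{I})$ applicable, so that the $\mathsf{I}$-cluster point problem for \eqref{eq:claimedsetnicesetsd2} genuinely reduces to the $\mathsf{I}$-limit point problem already handled by Theorem \ref{thm:denseIimitpoints}.
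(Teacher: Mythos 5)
Your proof is correct and follows exactly the route of the paper: reduce $\mathsf{I}$-cluster points to $\mathsf{I}$-limit points via the identity $\Gamma_{\bm{x}}(\mathsf{I})=\Lambda_{\bm{x}}(\mathsf{I})$ for $F_\sigma$-ideals on first countable spaces, then invoke Theorem \ref{thm:denseIimitpoints} (noting that an $F_\sigma$ $P$-ideal is in particular an analytic $P$-ideal). Your write-up is in fact slightly more careful than the paper's one-line proof in making the $F_\sigma\Rightarrow$ analytic step and the first-countability check explicit.
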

\begin{proof}
This follows by Theorem \ref{thm:denseIimitpoints} and the fact that $\mathsf{I}$-cluster points coincide with $\mathsf{I}$-limit points if $X$ is first countable and $\mathsf{I}$ is a $F_\sigma$-ideal, see \cite[Theorem 2.3]{MR3883171}. 
\end{proof}


It is worth noting that the denseness of the sets \eqref{eq:claimedsetnicesetsd} and \eqref{eq:claimedsetnicesetsd2} hold under much weaker hypotheses, regardless of the complexity of $\mathsf{I}$ and the completeness of $X$:
\begin{prop}\label{prop:denseIimitpoints}
Let $(X,T)$ be a dynamical system, where $X$ is a topological group and $T$ is a homomorphism. Let $\mathsf{I}$ be an ideal on $\omega$ and suppose also that there exists a dense $D\subseteq X$ such that $\lim_nT^nz=0$ for all $z \in D$. 
Then, for all $x_0 \in X$, each of the sets
$$
\{y \in X: x_0 \text{ is an }\mathsf{I}\text{-limit point of }\mathrm{orb}(y,T)\}
$$
and 
$$
\{y \in X: x_0 \text{ is an }\mathsf{I}\text{-cluster point of }\mathrm{orb}(y,T)\}
$$
is either empty or dense.  
\end{prop}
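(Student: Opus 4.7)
The plan is to argue that once one witness $x \in X$ exists for which $x_0$ is an $\mathsf{I}$-limit (respectively, $\mathsf{I}$-cluster) point of $\mathrm{orb}(x,T)$, the additive structure lets us translate this witness to an arbitrarily prescribed nonempty open set $V\subseteq X$ by adding a \emph{null-orbit} element from $D$. Fix $x_0 \in X$ and assume the relevant set is nonempty with witness $x$. Given nonempty open $V$, use density of $D$ to pick $z \in D\cap(-x+V)$ and set $y:=x+z\in V$. Since $T$ is a homomorphism, $T^ny=T^nx+T^nz$ for every $n \in \omega$, and by hypothesis $\lim_nT^nz=0$.

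For the $\mathsf{I}$-limit statement: pick $A\in \mathsf{I}^+$ such that $\lim_{n \in A}T^nx=x_0$. Since the addition map of $X$ is continuous and $T^nz\to 0$, we obtain $\lim_{n \in A}T^ny=x_0+0=x_0$, so $y$ belongs to the first set and the latter is dense.

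For the $\mathsf{I}$-cluster statement: fix an open neighborhood $U$ of $x_0$. By continuity of addition at $(x_0,0)$, there exist open neighborhoods $W$ of $x_0$ and $W_0$ of $0$ with $W+W_0\subseteq U$. Since $T^nz\to 0$, pick $N \in \omega$ with $T^nz \in W_0$ for all $n\geq N$; then for every $n \in N(x,W)$ with $n\geq N$ we have $T^ny=T^nx+T^nz \in W+W_0\subseteq U$, so $N(x,W)\setminus [0,N)\subseteq N(y,U)$. Because $x_0$ is an $\mathsf{I}$-cluster point of $\mathrm{orb}(x,T)$ we have $N(x,W)\in \mathsf{I}^+$, and invariance of ideals modulo finite sets yields $N(y,U)\in \mathsf{I}^+$. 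Since $U$ was an arbitrary open neighborhood of $x_0$, the point $y\in V$ belongs to the second set, proving its denseness.

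No serious obstacle arises: the argument is a direct transfer via the group operation. The only point requiring care is the topological-group version of continuity of addition (used once to pass a limit through a sum, and once more to split an open neighborhood of $x_0$ as $W+W_0$); beyond this, one just invokes the hypothesis $T^nz\to 0$ and the fact that $\mathsf{I}^+$ is invariant under finite modifications.
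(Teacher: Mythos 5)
Your proof is correct and follows essentially the same route as the paper's: translate the witness $x$ into the prescribed open set $V$ by adding a null-orbit element $z\in D\cap(-x+V)$, then use the homomorphism property $T^ny=T^nx+T^nz$ and $T^nz\to 0$ to transfer the $\mathsf{I}$-limit (resp.\ $\mathsf{I}$-cluster) property to $y$. The paper omits the details of the cluster-point case, which you correctly fill in via the finite-modification invariance of $\mathsf{I}^+$.
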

\begin{proof}
Fix $x_0 \in X$ and suppose that $x_0$ is an $\mathsf{I}$-limit point of $\mathrm{orb}(x,T)$ for some $x \in X$, so that there exists $A \in \mathsf{I}^+$ such that $\lim_{n \in A}T^nx=x_0$. Let $U\subseteq X$ be a nonempty open set. 
Fix a point $z \in (-x+U) \cap D$ and define $y:=x+z$. For each open neighborhood $V$ of $x_0$, there exists an open neighborhood $W_0\subseteq X$ of $0$ with the property that $x_0+W_0+W_0\subseteq V$, see e.g. \cite[Chapter 3]{MR0000265}. It follows that $T^n y=T^n x+T^n z \in (x_0+W_0)+W_0$ for all sufficiently large integers in $A$. Therefore $x_0$ is an $\mathsf{I}$-limit point of $\mathrm{orb}(y,T)$. 

The proof for the second part goes analogously (we omit details). 
\end{proof}

Now we show that, with similar hypotheses, $\mathsf{I}$-universality and $\mathsf{I}$-recurrence coincide with the property that each nonempty open set contains a $\mathsf{I}$-cluster point of some orbits, hence extending our previous Theorem \ref{thm:IrecurrentIhypercyclic}.

\begin{thm}\label{thm:zerooneorbitlkfj}
Let $(X,T)$ be a dynamical system, where $X$ is a Baire second countable group and $T$ is a homomorphism. Also, let $\mathsf{I}$ be a strongly-right-translation invariant analytic $P$-ideal and suppose that there exists a dense $D\subseteq X$ such that $\lim_n T^n z=0$ for all $z \in D$. 

Then the following are equivalent\textup{:}
\begin{enumerate}[label={\rm (\roman{*})}]
\item \label{item:2orbit} $T$ is $\mathsf{I}$-universal\textup{;}
\item \label{item:4orbit} $T$ is $\mathsf{I}$-recurrent\textup{;}
\item \label{item:5orbit} For each nonempty open set $U\subseteq X$, there exists $x \in X$ such that its orbit $\mathrm{orb}(x,T)$ has an $\mathsf{I}$-cluster point in $U$\textup{.}
\end{enumerate}
If, in addition, $\mathsf{I}$ is a $F_\sigma$-ideal, they are also equivalent to\textup{:}
\begin{enumerate}[label={\rm (\roman{*})}]
\setcounter{enumi}{3}
\item \label{item:1orbit} $T$ is $\mathsf{I}$-strong universal\textup{;}
\item \label{item:3orbit} $T$ is $\mathsf{I}$-strong recurrent\textup{.}
\end{enumerate}
\end{thm}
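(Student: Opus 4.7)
The plan is to establish the cycle (i) $\Longleftrightarrow$ (ii) $\Longleftrightarrow$ (iii), and then, under the additional $F_\sigma$ assumption, reduce (iv) and (v) to (i) and (ii) by a direct application of Corollary \ref{cor:Gsigmaideal}. Since all hypotheses of Theorem \ref{thm:IrecurrentIhypercyclic} are in force, (i) $\Longleftrightarrow$ (ii) is available for free. Also, (i) $\Longrightarrow$ (iii) and (ii) $\Longrightarrow$ (iii) are immediate: an $\mathsf{I}$-universal point $x$ makes every $y\in X$ (in particular any $y\in U$) an $\mathsf{I}$-cluster point of $\mathrm{orb}(x,T)$; and if $\mathrm{Rec}_T(\mathsf{I})$ is dense, any nonempty open $U$ contains some $\mathsf{I}$-recurrent point, which is itself an $\mathsf{I}$-cluster point of its own orbit.

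The only nontrivial implication is (iii) $\Longrightarrow$ (i), which I would prove via the Birkhoff-type criterion in Proposition \ref{prop:birkoff}. Fix a lscsm $\varphi$ with $\mathsf{I}=\mathrm{Exh}(\varphi)$ as in Definition \ref{defi:stronginvariant}, and let $U\subseteq X$ be a nonempty open set. By (iii), choose $x\in X$ and $x_0\in U$ so that $x_0\in \Gamma_{\mathrm{orb}(x,T)}(\mathsf{I})$. Apply Proposition \ref{prop:endomorphism} with $u:=x_0$ to obtain an open neighborhood $U_0\subseteq U$ of $x_0$ such that, for every nonempty open $V\subseteq X$, there exist $y\in V$ and $F\in \mathrm{Fin}$ with
\[
N(x,U_0)\setminus F\subseteq N(y,U).
\]
Since $U_0$ is a neighborhood of the $\mathsf{I}$-cluster point $x_0$ of $\mathrm{orb}(x,T)$, the set $N(x,U_0)$ lies in $\mathsf{I}^+$, so that $\kappa:=\|N(x,U_0)\|_\varphi>0$. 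Using that $\|\cdot\|_\varphi$ is monotone and invariant modulo finite sets, the $y$ produced above satisfies $\|N(y,U)\|_\varphi\ge \kappa$, and this bound is uniform in $V$. Proposition \ref{prop:birkoff} then yields that $T$ is $\mathsf{I}$-universal, completing the main cycle.

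For the final equivalences, assume in addition that $\mathsf{I}$ is $F_\sigma$. Since $X$ is second countable, Corollary \ref{cor:Gsigmaideal} gives both (i) $\Longleftrightarrow$ (iv) and (ii) $\Longleftrightarrow$ (v), which splices the strong variants into the already-established equivalences.

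The main (mild) obstacle is the implication (iii) $\Longrightarrow$ (i): one must leverage the cluster-point information at a single orbit to produce, at \emph{every} nonempty open $V$, a point $y$ with a uniform quantitative lower bound $\|N(y,U)\|_\varphi\ge\kappa$. This is exactly the function performed by the combination of Proposition \ref{prop:endomorphism} (which transfers an $\mathsf{I}$-positive return set from the arbitrary starting point $x$ to a dense supply of new starting points, up to finite modification) and the strong-right-translation invariance of $\varphi$ (which is already built into Proposition \ref{prop:birkoff}). Once these ingredients are in place, the rest is orchestration.
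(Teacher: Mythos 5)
Your proof is correct, and its overall architecture --- (i)$\,\Longleftrightarrow\,$(ii) from Theorem \ref{thm:IrecurrentIhypercyclic}, the trivial implications into (iii), a Birkhoff-criterion argument for (iii)$\,\Longrightarrow\,$(i), and the cluster-equals-limit fact for $F_\sigma$-ideals to splice in the strong variants --- matches the paper's. The one place you genuinely diverge is the execution of (iii)$\,\Longrightarrow\,$(i). The paper routes through Theorem \ref{thm:mareklimit} and Proposition \ref{prop:denseIimitpoints}, setting $\kappa:=\lim_k\|N(x,U_k)\|_\varphi$ along a decreasing local base $(U_k)$ at $x_0$ and asserting $\kappa>0$ because $x_0$ is an $\mathsf{I}$-cluster point of $\mathrm{orb}(x,T)$; but Theorem \ref{thm:mareklimit} characterizes $\mathsf{I}$-\emph{limit} points by the positivity of that limit, and for a general analytic $P$-ideal a cluster point need not be a limit point, so that step requires extra justification from hypothesis (iii) as stated. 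Your route sidesteps this: you apply Proposition \ref{prop:endomorphism} with $u=x_0$ and only need $\|N(x,U_0)\|_\varphi>0$ for the \emph{single} neighborhood $U_0$, which follows immediately from the cluster-point hypothesis since $N(x,U_0)\in\mathsf{I}^+$ and $\mathsf{I}=\mathrm{Exh}(\varphi)$; monotonicity and invariance modulo finite sets of $\|\cdot\|_\varphi$ then give the uniform bound $\|N(y,U)\|_\varphi\ge\kappa$ demanded by Proposition \ref{prop:birkoff}. This is exactly the mechanism the paper itself uses inside Theorem \ref{thm:IrecurrentIhypercyclic} (there with $x=u=z$ an $\mathsf{I}$-recurrent point), generalized to let the orbit's base point $x$ and the target $u=x_0$ differ --- a cleaner and slightly more robust execution of the same idea.
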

\begin{proof}
Theorem \ref{thm:IrecurrentIhypercyclic} proves the equivalence \ref{item:2orbit} $\Longleftrightarrow$ \ref{item:4orbit}. It is obvious that \ref{item:2orbit} $\implies$ \ref{item:5orbit}. Conversely, let us show \ref{item:5orbit} $\implies$ \ref{item:2orbit}. 
To this aim, pick a nonempty open set $U\subseteq X$ and fix $x\in X$ and $x_0 \in U$ such that $x_0$ is an $\mathsf{I}$-cluster point of $\mathrm{orb}(x,T)$. 
Define $\kappa:=\|N(x,U)\|_\varphi>0$. At this point, by Proposition \ref{prop:denseIimitpoints} and its proof, for each nonempty open $V\subseteq X$ there exists $y \in V$ such that $x_0$ is an $\mathsf{I}$-cluster point of $\mathrm{orb}(y,T)$ (in particular, $U \cap \Gamma_{\mathrm{orb}(y,T)}(\mathsf{I})\neq \emptyset$) and, more precisely, $\|N(y,U)\|_\varphi\ge \kappa$ because $N(y,U)$ contains a cofinite subset of $N(x,U)$. 
%
We conclude by Proposition \ref{prop:birkoff} that $T$ is $\mathsf{I}$-universal. 

Lastly, 
the equivalences \ref{item:2orbit} $\Longleftrightarrow$ \ref{item:1orbit} and \ref{item:4orbit} $\Longleftrightarrow$ \ref{item:3orbit} follow by \cite[Theorem 2.3]{MR3883171}.
\end{proof}



In the next result, we prove that the technical condition of being strongly-right-translation invariant can be removed in case of $F_\sigma$ $P$-ideals. 
\begin{thm}\label{thm:zerooneorbitlkfjFsigma}
Let $(X,T)$ be a dynamical system, where $X$ is a Polish group and $T$ is a homomorphism. Also, let $\mathsf{I}$ be a $F_\sigma$ $P$-ideal and suppose that there exists a dense $D\subseteq X$ such that $\lim_n T^n z=0$ for all $z \in D$. 

Then the following are equivalent\textup{:}
\begin{enumerate}[label={\rm (\roman{*})}]
\item \label{item:A1orbit} $T$ is $\mathsf{I}$-strong universal\textup{;}
\item \label{item:A2orbit} $T$ is $\mathsf{I}$-universal\textup{;}
\item \label{item:A3orbit} For each nonempty open set $U\subseteq X$, there exists $x \in X$ such that its orbit $\mathrm{orb}(x,T)$ has an $\mathsf{I}$-cluster point in $U$.
\end{enumerate}
\end{thm}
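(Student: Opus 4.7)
My plan is to prove the nontrivial implication $\ref{item:A3orbit} \implies \ref{item:A1orbit}$; the chain $\ref{item:A1orbit} \implies \ref{item:A2orbit} \implies \ref{item:A3orbit}$ is immediate from the definitions. The central idea is to bypass Proposition \ref{prop:birkoff} (which required strong right-translation invariance) by using Theorem \ref{thm:denseIimitpoints} directly, exploiting that for $F_\sigma$-ideals the notions of $\mathsf{I}$-cluster point and $\mathsf{I}$-limit point coincide.

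First I would fix, by separability of $X$, a countable base $\{V_k: k \in \omega\}$ of nonempty open sets. Condition \ref{item:A3orbit} yields, for each $k$, a point $x_k \in X$ and some $\eta_k \in V_k$ with $\eta_k \in \Gamma_{\mathrm{orb}(x_k,T)}(\mathsf{I})$. Since $\mathsf{I}$ is $F_\sigma$ and $X$ is first countable, the characterization \cite[Theorem 2.3]{MR3883171} gives $\eta_k \in \Lambda_{\mathrm{orb}(x_k,T)}(\mathsf{I})$ as well. Now every Polish group is completely metrizable, so Theorem \ref{thm:denseIimitpoints} applies and tells us that for each $k$ the set
\[
A_k:=\{y \in X:\eta_k \in \Lambda_{\mathrm{orb}(y,T)}(\mathsf{I})\}
\]
is comeager (it is certainly nonempty, since $x_k \in A_k$). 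By Baire's theorem, $A:=\bigcap_k A_k$ is still comeager, hence nonempty.

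The next step is to promote any $y \in A$ to an $\mathsf{I}$-strong universal point. For such a $y$, each $\eta_k$ is in particular an $\mathsf{I}$-cluster point of $\mathrm{orb}(y,T)$. A direct argument shows that $\Gamma_{\mathrm{orb}(y,T)}(\mathsf{I})$ is a closed subset of $X$: its complement is open because if some neighborhood $U$ of $\eta$ witnesses $\{n: T^n y \in U\} \in \mathsf{I}$, then $U$ itself witnesses the same for every point of $U$. Since $\{V_k: k \in \omega\}$ is a base, the set $\{\eta_k: k \in \omega\}$ is dense in $X$, and being contained in the closed set $\Gamma_{\mathrm{orb}(y,T)}(\mathsf{I})$ forces $\Gamma_{\mathrm{orb}(y,T)}(\mathsf{I})=X$. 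Invoking \cite[Theorem 2.3]{MR3883171} once more, we get $\Lambda_{\mathrm{orb}(y,T)}(\mathsf{I})=X$, i.e., $y \in \mathrm{sUniv}_T(\mathsf{I})$, which concludes the proof.

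The main conceptual obstacle is to recognize that the hypothesis of strong right-translation invariance in Theorem \ref{thm:zerooneorbitlkfj} was used only to get $\mathsf{I}$-universality from cluster points via Proposition \ref{prop:birkoff}, and that for $F_\sigma$ P-ideals one can instead apply Theorem \ref{thm:denseIimitpoints} pointwise to the $\eta_k$'s and combine via Baire category; the closedness of $\Gamma_{\bm{x}}(\mathsf{I})$ then upgrades a dense set of cluster points to all of $X$. The stronger Polish hypothesis (replacing Baire second countable) is precisely what makes Theorem \ref{thm:denseIimitpoints} available here.
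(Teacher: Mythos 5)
Your proof is correct and follows essentially the same route as the paper's: the paper also intersects countably many comeager sets obtained from the Banach--Mazur game result (it invokes Corollary \ref{cor:denseclusterpoints}, which is exactly Theorem \ref{thm:denseIimitpoints} combined with the coincidence of $\mathsf{I}$-cluster and $\mathsf{I}$-limit points for $F_\sigma$-ideals) and then uses the closedness of $\Gamma_{\mathrm{orb}(y,T)}(\mathsf{I})$ to upgrade a dense set of cluster points to all of $X$. If anything, your version is slightly more careful than the paper's, since by picking the $\eta_k$ inside the basic open sets $V_k$ via condition (iii) you explicitly guarantee that each set $A_k$ is nonempty (hence comeager rather than empty), a point the paper leaves implicit when it starts from an arbitrary countable dense subset.
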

\begin{proof}
The equivalence \ref{item:A1orbit} $\Longleftrightarrow$ \ref{item:A2orbit} follows by \cite[Theorem 2.3]{MR3883171}. 
It is obvious that \ref{item:A2orbit} $\implies$ \ref{item:A3orbit}. Conversely, let us show \ref{item:A3orbit} $\implies$ \ref{item:A2orbit}. For, pick a countable dense subset $\{\eta_k: k \in \omega\} \subseteq X$. 
Thanks to Corollary \ref{cor:denseclusterpoints}, for each $k \in \omega$, there exists a comeager set $G_k\subseteq X$ such that $\eta_k$ is an $\mathsf{I}$-cluster point of $\mathrm{orb}(y,T)$ for all $y \in G_k$. At this point, define the comeager set 
$
G:=\bigcap\nolimits_{k}G_k.
$ 
For each $y \in G$, it follows that $\Gamma_{\mathrm{orb}(y,T)}(\mathsf{I})$ contains $\{\eta_n: n \in \omega\}$ and, in addition, it is closed by \cite[Lemma 3.1(iv)]{MR3920799}. Therefore $\Gamma_{\mathrm{orb}(y,T)}(\mathsf{I})=X$ whenever $y \in G$, so that $G\subseteq \mathrm{Univ}_{T}(\mathsf{I})$. 
%
\end{proof}

Even if it is not needed in 
Corollary \ref{cor:denseclusterpoints} and Theorem \ref{thm:zerooneorbitlkfjFsigma}, it is possible to characterize $F_\sigma$ $P$-ideals on the same lines of \eqref{eq:mazurcharacter} and \eqref{eq:characterizationanalPideal}, see \cite[Theorem 1.2.5(c)]{MR1711328}. 

Also, it is worth noting that a weaker property than the above condition \ref{item:A3orbit} has been considered by Costakis and Parissis in \cite{MR2943720}. The latter only requires that, for each nonempty open set $U\subseteq X$ there exists $x \in X$ such that $N(x,U) \in \mathsf{I}^+$, cf. also \cite{MR3742548} and \cite[Proposition 4.6]{MR2323544}. 

Moreover, the last two results seem to be somehow related the phenomenon called \textquotedblleft zero-one law of orbital limit points\textquotedblright\, initiated by Chan and Seceleanu \cite{MR2881542}, where they proved that, if $T$ is a (unilateral) weighted shift on $\ell_p$ (with $1\le p<\infty$), then $T$ is hypercyclic if and only if there exists an orbit with a nonzero limit point; cf. also \cite{BonillaErdmann} and references therein. We remark that the latter characterization is rather special and does not hold for general operators, 
see \cite[Section 7]{Abak} and \cite[Proposition 2.2]{MR3490772}. 


\begin{defi}
Given a first countable dynamical system $(X,T)$, a decreasing local base $(U_k)$ at $\eta \in X$, and a lscsm $\varphi: \mathcal{P}(\omega) \to [0,\infty)$, define the parameter
$$
c_\varphi(T, \eta):=\inf\nolimits_{k} \sup \left\{ \| N(x,U_k)\|_\varphi: x \text{ is a universal point}\right\}.
$$
\end{defi}

In the special case where $X$ is a Banach space, $\eta=0$, and $\mathrm{Exh}(\varphi)=\mathsf{Z}$, a similar definition (which, however, relies on the explicit definition of $\mathsf{d}^\star$) appeared in \cite{MR3255465}, cf. also \cite{MR4238631}. 
Therein, the study of this \textquotedblleft natural parameter\textquotedblright\, has been useful 
to show the existence of a frequently hypercyclic operator on $c_0(\mathbb{Z})$ which
does not admit any ergodic measure with full support. 

\begin{prop}\label{prop:lscsminclusion}
Let $(X,T)$ be a dynamical system, where $X$ is a first countable space, and fix a nonempty open set $U\subseteq X$. Let also $x \in X$ be a universal point 
and fix a lscsm $\varphi: \mathcal{P}(\omega) \to [0,\infty)$ such that $\|\cdot\|_\varphi$ is translation invariant. 
%
%
Then 
$$
\left\{y \in X: \|N(y,U)\|_\varphi \ge \|N(x,U)\|_\varphi \right\}
$$
contains a dense $G_\delta$-set.
\end{prop}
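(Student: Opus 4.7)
The plan is to verify directly that the set
\[
G := \{y \in X : \|N(y,U)\|_\varphi \geq \kappa\}, \quad \text{where } \kappa := \|N(x,U)\|_\varphi,
\]
is itself a dense $G_\delta$-subset of $X$, which is slightly stronger than the stated conclusion. The argument splits naturally into a topological step (showing $G_\delta$-ness) and a dynamical step (showing density).

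For the $G_\delta$ property, I would observe that for every finite $F \subseteq \omega$ the set $\{y \in X : F \subseteq N(y,U)\} = \bigcap_{j \in F}(T^j)^{-1}(U)$ is open, by continuity of each $T^j$ and openness of $U$. Because $\varphi$ is a lscsm, for every $n, m \in \omega$ and every $t > 0$,
\[
\{y : \varphi(N(y,U) \cap [n,m]) > t\} = \bigcup_{F \subseteq [n,m],\,\varphi(F) > t} \{y : F \subseteq N(y,U)\}
\]
is open as a union of open sets. Combining this with the identities $\|S\|_\varphi = \inf_n \varphi(S \cap [n,\infty))$ and $\varphi(S \cap [n,\infty)) = \sup_m \varphi(S \cap [n,m])$ (both consequences of the lower semicontinuity of $\varphi$), one rewrites
\[
G = \bigcap_{n,k \in \omega}\,\bigcup_{m \in \omega}\, \{y : \varphi(N(y,U) \cap [n,m]) > \kappa - 2^{-k}\},
\]
which is a $G_\delta$-set. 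For density, a direct computation gives $N(T^k x, U) = N(x,U) - k$ for every $k \in \omega$; the translation invariance of $\|\cdot\|_\varphi$, together with its invariance modulo finite sets, yields $\|A - k\|_\varphi = \|A\|_\varphi$ for all $A \subseteq \omega$ and $k \in \omega$, and hence $T^k x \in G$. Since $x$ is universal, its orbit is dense in $X$, and therefore so is $G$.

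The most delicate moment is the translation-invariance identity, where one must be careful that $N(T^k x, U) = N(x,U) - k$ holds in the paper's convention for the shift $A - k$. Beyond that, I do not see a genuine obstacle. In particular, the hypothesis $T^n x \notin \partial U$ for all $n \in \omega$ does not appear to be used in the argument above; I suspect it is imposed either for robustness relevant to the companion parameter $c_\varphi(T,\eta)$ defined just before the proposition, or to support an alternative proof strategy in which the dense $G_\delta$ consists of $y$'s whose return sets $N(y,U)$ actually \emph{contain} shifts of $N(x,U)$ modulo a finite set—an inclusion that can fail if $T^{n+k}x$ lies on $\partial U$.
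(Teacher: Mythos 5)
Your proof is correct, and it is essentially the paper's argument: both exhibit the target set as (containing) a countable intersection of sets that are determined by the behaviour of $N(y,U)$ on finite windows $[n,m]$, and both get density from the orbit $(T^kx)$ via $N(T^kx,U)=N(x,U)-k$ and the translation invariance of $\|\cdot\|_\varphi$. The one genuine difference is in how openness of the inner sets is secured. The paper works with sets of the form $S_{i,j,k}=\{y:\varphi(N(y,U)\cap[j,j+k])\ge 2^{-i}\|N(x,U)\|_\varphi \text{ and } T^ny\notin\partial U \text{ for } n\le j+k\}$ and argues that each point is interior because, away from $\partial U$, the return pattern on a finite window is locally constant; this is exactly where the hypothesis $T^nx\notin\partial U$ enters, since it is needed to place the points $T^mx$ inside these sets. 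Your version replaces this by the strict-inequality decomposition $\bigcap_{n,k}\bigcup_m\{y:\varphi(N(y,U)\cap[n,m])>\kappa-2^{-k}\}$ together with the monotonicity of $\varphi$ (if $F\subseteq N(y,U)$ on the window, then nearby points satisfy the same lower bound), which makes the inner sets open with no reference to $\partial U$. So your suspicion is confirmed: the boundary hypothesis is not needed for the statement itself, only for the particular $G_\delta$ set the paper constructs (whose points additionally avoid $\partial U$ along finite initial segments); your argument in fact proves the slightly stronger claim that the displayed set is itself a dense $G_\delta$.
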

\begin{proof}
The claimed set contains $S:=\bigcap_{i,j}\bigcup_{k} S_{i,j,k}$, where 
\begin{displaymath}
\begin{split}
S_{i,j,k}:=
\left\{y \in X: \varphi(N(y,U) \cap [j,j+k]\,) \ge 
(1-2^{-i})\cdot \|N(x,U)\|_\varphi
\right\}
\end{split}
\end{displaymath}
for all $i,j,k \in \omega$. By the continuity of $T$, 
it is easy to see that each point in $S_{i,j,k}$ is an interior point, hence $S$ is $G_{\delta}$. Moreover, since $\|\cdot\|_\varphi$ is translation invariant, the claimed set contains $T^nx$ for all $n \in \omega$. Therefore it is dense. 
\end{proof}

The following result provides the nonlinear version of \cite[Proposition 4.7]{MR3255465}. 
\begin{thm}\label{cor:Ilimitswithcvarphi}
Let $(X,T)$ be a universal dynamical system, where $X$ is a Baire round metric space, and fix a lscsm $\varphi: \mathcal{P}(\omega) \to [0,\infty)$ with $\|\cdot\|_\varphi$ translation invariant. Then 
$$
\left\{y \in 
X: 
\lim_{n \in A}T^ny=\eta \text{ for some }A\subseteq \omega \text{ with }\|A\|_\varphi \ge c_\varphi(T,\eta)\right\}
$$
is comeager for all $\eta \in X$ such that $c_\varphi(T,\eta)>0$. 

In particular, there exists a point $x \in X$ for which $\eta$ is an $\mathsf{I}$-limit point of $\mathrm{orb}(x,T)$, where $\mathsf{I}:=\mathrm{Exh}(\varphi)$.
\end{thm}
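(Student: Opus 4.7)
The plan is to write $\kappa := c_\varphi(T,\eta)>0$ and exhibit a comeager $H\subseteq X$ such that $\lim_k \|N(y,U_k)\|_\varphi \ge \kappa$ for every $y \in H$, where $(U_k)$ is a cleverly chosen decreasing local base of open balls at $\eta$. Given such an $H$, Theorem \ref{thm:mareklimit} automatically supplies, for each $y \in H$, a set $A\subseteq\omega$ with $\lim_{n \in A}T^ny=\eta$ and $\|A\|_\varphi \ge \kappa$, which is the conclusion. The \textquotedblleft in particular\textquotedblright\, statement then follows because a comeager subset of a Baire space is non-empty.

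The main tool will be Proposition \ref{prop:lscsminclusion}, which, for any universal $x$ whose entire orbit avoids $\partial U$, produces a dense $G_\delta$ of points $y$ satisfying $\|N(y,U)\|_\varphi \ge \|N(x,U)\|_\varphi$. To use it, I would first work with the canonical local base $V_k := B(\eta,2^{-k})$, which by the very definition of $c_\varphi$ satisfies $\sup\{\|N(x,V_k)\|_\varphi : x \text{ universal}\}\ge \kappa$, so that for each $k$ I can pick a universal $x_k$ with $\|N(x_k,V_k)\|_\varphi \ge \kappa - 2^{-k}$. Next, I would replace $V_k$ by a slightly larger ball $U_k := B(\eta,r_k)$ for which the boundary hypothesis holds. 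This is where roundness of $X$ enters: since $\partial B(\eta,r) \subseteq \{z : d(z,\eta)=r\}$ and the set $R_k := \{d(T^nx_k,\eta) : n \in \omega\}$ is countable, I may select $r_k \in (2^{-k},2^{-(k-1)})\setminus R_k$ (and $r_0 \in (1,2)\setminus R_0$). Then $(U_k)$ is a strictly decreasing nested local base at $\eta$, the inclusion $V_k \subseteq U_k$ gives $\|N(x_k,U_k)\|_\varphi \ge \kappa - 2^{-k}$, and $T^nx_k \notin \partial U_k$ for every $n \in \omega$.

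Proposition \ref{prop:lscsminclusion} then yields dense $G_\delta$ sets $H_k \subseteq X$ with $\|N(y,U_k)\|_\varphi \ge \kappa - 2^{-k}$ for all $y \in H_k$, and Baire's theorem makes $H := \bigcap_k H_k$ comeager. Since $\|N(y,U_k)\|_\varphi$ is weakly decreasing in $k$, its limit coincides with its infimum; combined with the pointwise bound $\|N(y,U_k)\|_\varphi \ge \kappa - 2^{-k}$, this forces $\lim_k \|N(y,U_k)\|_\varphi \ge \kappa$, closing the argument via Theorem \ref{thm:mareklimit}. The delicate point, and the sole reason roundness is invoked, is the simultaneous enforcement of the approximation inequality $\|N(x_k,U_k)\|_\varphi \ge \kappa - 2^{-k}$ and the boundary condition $T^nx_k \notin \partial U_k$ for a single sequence $(x_k,U_k)$; roundness reduces $\partial U_k$ to a single level set of $d(\cdot,\eta)$, which is missed by the countable set $R_k$ after an arbitrarily small perturbation of the radius.
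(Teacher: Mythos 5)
Your proposal is correct and follows essentially the same route as the paper's proof: use the definition of $c_\varphi(T,\eta)$ to pick near-optimal universal points $x_k$, invoke Proposition \ref{prop:lscsminclusion} for each $k$ to get dense $G_\delta$ sets, intersect them, and finish with Theorem \ref{thm:mareklimit} plus the fact that comeager sets in a Baire space are nonempty. Your ordering of the choices --- fixing $x_k$ for the smaller ball $V_k$ first and only then perturbing the radius to miss the countable set $\{d(T^nx_k,\eta): n\in\omega\}$, using monotonicity of $\|N(x_k,\cdot)\|_\varphi$ to keep the lower bound --- is in fact a slightly more careful treatment of the boundary hypothesis of Proposition \ref{prop:lscsminclusion} than the paper's, which selects the radii to avoid the orbit of a single auxiliary universal point rather than of each $x_k$.
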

\begin{proof}
Fix a vector $\eta \in X$ such that $c_\varphi(T,\eta)>0$. 
By definition, 
there exists a sequence of universal vectors $(x_k)$ in $X$ such that 
\begin{equation}\label{eq:choiceconditions}
\|N(x_k,B(\eta,2^{-k}))\|_\varphi \ge c_\varphi(T,\eta)(1-2^{-k})
\end{equation}
for all $k \in \omega$. It follows by Proposition \ref{prop:lscsminclusion} that 
the set of vectors $x \in X$ such that $\|N(x,B(\eta,2^{-k}))\|_\varphi \ge \|N(x_k,B(\eta,2^{-k}))\|_\varphi$ for all $k \in \omega$ 
is comeager. Together with \eqref{eq:choiceconditions}, it follows that the set 
$$
\left\{y \in X: \lim_{k\to \infty} \|N(y,B(\eta,2^{-k}))\|_\varphi \ge c_\varphi(T,\eta) \right\}
$$
is comeager. 
Hence, taking into account also Theorem \ref{thm:mareklimit}, there exists a comeager set of 
vectors $y \in X$ with the property $\lim_{n \in A}T^n y=\eta$ for some set $A\subseteq \omega$ (which depends on $y$) such that $\|A\|_\varphi \ge  c_\varphi(T,\eta)$. This completes the proof. 
\end{proof}

The following easy lemma is folklore and we include its proof for the sake of completeness, cf. e.g. \cite[Claim 5.24]{MR4238631}:
\begin{lem}\label{lem:triviallemsynd}
Let $\mathsf{I}$ be an ideal on $\omega$. If $A \in \mathsf{I}^+$ and $B$ is syndetic then $A\cap (B-h) \in \mathsf{I}^+$ for some $h \in \omega$ with $h\le h_0$, where $h_0\ge 1$ stands for the maximum gap of $B$. 

If, in addition, $\mathsf{I}=\mathrm{Exh}(\varphi)$ for some lscsm $\varphi$, then it is possible to choose $h\le h_0$ such that $\|A\cap (B-h)\|_\varphi \ge \|A\|_\varphi/h_0$. 
\end{lem}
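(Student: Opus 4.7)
The plan is to exhibit the covering of $\omega$ by finitely many translates of $B$ that is dictated by its maximum gap, and then apply closure of ideals under finite unions (respectively, subadditivity of $\|\cdot\|_\varphi$).

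First I would observe that, since $h_0$ is the maximum gap of $B$, every integer $n \in \omega$ lies within distance at most $h_0-1$ of some element of $B$, i.e., there exists $h \in \{0,1,\ldots,h_0-1\}$ such that $n+h \in B$, equivalently $n \in B-h$. This yields the finite covering
$$
\omega = \bigcup\nolimits_{h=0}^{h_0-1}(B-h),
$$
and intersecting with $A$ gives $A = \bigcup_{h=0}^{h_0-1}(A\cap (B-h))$.

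For the first claim, since $A \in \mathsf{I}^+$ and $\mathsf{I}$ is closed under finite unions, the family $\{A\cap (B-h): 0\le h\le h_0-1\}$ cannot entirely lie in $\mathsf{I}$; hence at least one $h$ in this range (so, in particular, with $h\le h_0$) satisfies $A\cap (B-h)\in \mathsf{I}^+$. For the second claim, I would first verify that $\|\cdot\|_\varphi$ inherits subadditivity from $\varphi$ by a standard $\varepsilon$-argument: given $\varepsilon>0$, choose finite sets $F_1,F_2$ with $\varphi(A_i\setminus F_i) < \|A_i\|_\varphi+\varepsilon/2$ and use $F:=F_1\cup F_2$ to bound $\varphi((A_1\cup A_2)\setminus F)$. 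Applying this inductively to the covering above gives
$$
\|A\|_\varphi \le \sum\nolimits_{h=0}^{h_0-1}\|A\cap (B-h)\|_\varphi,
$$
so by averaging some $h \in \{0,\ldots,h_0-1\}$ achieves $\|A\cap (B-h)\|_\varphi \ge \|A\|_\varphi/h_0$, as required.

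There is no real obstacle here; the only mild point of care is to verify subadditivity of $\|\cdot\|_\varphi$ (rather than merely of $\varphi$), since $\|\cdot\|_\varphi$ is defined as an infimum over cofinite pieces and this has to pass through a two-parameter approximation. Everything else reduces to the elementary pigeonhole observation that a set not in the ideal cannot be covered by finitely many sets in the ideal.
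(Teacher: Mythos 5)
Your proof is correct and follows essentially the same route as the paper's (which is only a two-line sketch): cover $\omega$ by the translates $B-h$ with $h<h_0$, then apply closure of $\mathsf{I}$ under finite unions for the first claim and subadditivity of $\|\cdot\|_\varphi$ plus pigeonhole for the second. The one nitpick is that $\bigcup_{h=0}^{h_0-1}(B-h)$ need not equal $\omega$ exactly (integers below $\min B$ can be missed), only cofinitely --- but since $\mathrm{Fin}\subseteq\mathsf{I}$ and $\|\cdot\|_\varphi$ is invariant modulo finite sets, this is harmless and is in fact exactly how the paper phrases it.
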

\begin{proof}
Since $\bigcup_{h\le h_0}(B-h)$ is cofinite then $\bigcup_{h\le h_0}(A \cap (B-h))$ is equal to $A$ modulo a finite set. Hence at least one element is $\mathsf{I}$-positive. The second part follows by the fact that $\|\cdot\|_\varphi$ is subadditive and invariant modulo finite sets.
\end{proof}

\begin{thm}\label{thm:sufficientforIuniversal}
Let $(X,T)$ be a universal dynamical system, where $X$ is a Banach space and $T$ is a homomorphism. 
Fix a lscsm $\varphi: \mathcal{P}(\omega) \to [0,\infty)$ such that $\|\cdot\|_\varphi$ is translation invariant and define $\mathsf{I}:=\mathrm{Exh}(\varphi)$. 
Also, suppose that there exists a dense set of uniformly recurrent vectors, and that $c_\varphi(T, 0)>0$.

Then $T$ is $\mathsf{I}$-universal.
\end{thm}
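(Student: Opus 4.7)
The plan is to verify condition (iii) of Proposition \ref{prop:birkoff}. First note that since $T$ is universal the space $X$ must be separable, hence Baire and second countable; moreover, $\mathsf{I}=\mathrm{Exh}(\varphi)$ is an analytic $P$-ideal and the translation invariance of $\|\cdot\|_\varphi$ yields \eqref{eq:stronglyright} with $c=1$, so $\mathsf{I}$ is strongly-right-translation invariant. Thus it suffices to show that for each nonempty open $U\subseteq X$ there exists $\kappa>0$ such that every nonempty open $V\subseteq X$ contains a point $y$ with $\|N(y,U)\|_\varphi\ge \kappa$.

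Fix such a $U$. Using the density of uniformly recurrent vectors, choose a uniformly recurrent $u\in U$ and a radius $r>0$ with $B(u,r)\subseteq U$. The set $B:=N(u,B(u,r/2))$ is then syndetic; let $h_0\ge 1$ be its maximal gap. I will set
$$
\kappa:=c_\varphi(T,0)/h_0,
$$
which is strictly positive by hypothesis and depends only on $U$. Given a nonempty open $V$, Theorem \ref{cor:Ilimitswithcvarphi} applied at $\eta=0$ produces a comeager set $W\subseteq X$ such that every $w\in W$ admits $A_w\subseteq \omega$ with $\lim_{n\in A_w}T^nw=0$ and $\|A_w\|_\varphi\ge c_\varphi(T,0)$; since $W$ is dense and $V-u$ is open and nonempty, pick $w\in W\cap (V-u)$ and set $y:=w+u\in V$.

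Now apply Lemma \ref{lem:triviallemsynd} to $A_w\in \mathsf{I}^+$ and the syndetic $B$: there exists $h\le h_0$ with
$$
\|A_w\cap (B-h)\|_\varphi \ge \|A_w\|_\varphi/h_0 \ge \kappa.
$$
For $n\in A_w\cap (B-h)$ one has $T^{n+h}u\in B(u,r/2)$ since $n+h\in B$, while $T^{n+h}w=T^h(T^nw)$ has norm at most $\|T\|^h\|T^nw\|$, which is eventually less than $r/2$ because $T^nw\to 0$ along $A_w$ and $h$ is fixed. By additivity of $T$, this gives $T^{n+h}y=T^{n+h}w+T^{n+h}u\in B(u,r)\subseteq U$ for all but finitely many such $n$. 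Hence $(A_w\cap (B-h))+h\subseteq N(y,U)$ modulo a finite set, and combining translation invariance with invariance modulo finite sets of $\|\cdot\|_\varphi$ yields $\|N(y,U)\|_\varphi\ge \kappa$, as required.

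The only delicate point is the bookkeeping around the shift by $h$: one must observe that the same $h_0$ works uniformly in $w$ (it depends only on $U$ via $u,r$), and that although $h$ itself depends on $w$, the lower bound $\kappa=c_\varphi(T,0)/h_0$ does not; the rest is routine once one exploits the boundedness of $T^h$ and the translation invariance of $\|\cdot\|_\varphi$.
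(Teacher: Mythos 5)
Your argument is correct and follows essentially the same route as the paper's proof: both combine the comeager set of vectors $w$ with $\lim_{n\in A_w}T^nw=0$ and $\|A_w\|_\varphi\ge c_\varphi(T,0)$ supplied by Theorem \ref{cor:Ilimitswithcvarphi} with the syndetic return set of a uniformly recurrent vector near the target open set via Lemma \ref{lem:triviallemsynd}, and then use additivity of $T$ to bound $\|N(w+u,U)\|_\varphi$ from below by $c_\varphi(T,0)/h_0$. The only differences are cosmetic --- you conclude via condition (iii) of Proposition \ref{prop:birkoff} while the paper directly exhibits the comeager set $\bigcap_k(X_0+x_k)$ of $\mathsf{I}$-universal vectors, and your bound $\|T\|^h\|T^nw\|$ should be replaced by continuity of $T^h$ at $0$, since $T$ is only assumed to be a continuous homomorphism rather than a bounded linear operator.
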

\begin{proof}
Thanks to Theorem \ref{cor:Ilimitswithcvarphi} there exists a comeager set $X_0$ of vectors $x \in X$ such that $\lim_{n \in A_x}T^nx=0$ for some $A_x\subseteq \omega$ with $\|A_x\|_\varphi \ge c_\varphi(T, 0)$. Note that, since $T$ is a continuous homomorphism, $\lim_{n \in A_x+m}T^nx=0$ holds too, for all $m \in \omega$. 
Since $T$ is universal and $X$ is a Banach space, there exists a countable base of nonempty open sets $(U_k)$. 
For each $k \in \omega$, it is possible to fix a uniformly recurrent vector $x_k \in X$ and a sufficiently small radius $r_k>0$ such that 
$$
B(x_k,2r_k) \subseteq U_k.
$$
Denote also $h_k$ the maximum gap of the syndetic set $B_k:=N(x_k, B(x_k,r_k))$. It follows by Lemma \ref{lem:triviallemsynd} that there exists an integer $i_{x,k} \in [0,h_k]$ such that 
$$
\left\|(A_x+i_{x,k}) \cap B_k\right\|_\varphi=
\left\|A_x \cap (B_k-i_{x,k})\right\|_\varphi \ge \frac{c_\varphi(T, 0)}{h_k}>0
$$
for all $x \in X_0$. At the same time, for all $x \in X_0$, all $k \in\omega$, and all sufficiently large integers $n \in (A_x+i_{x,k}) \cap B_k$, we have 
$$
T^n(x+x_k)=
T^{n}x+T^{n}x_k \in B(0,r_k)+B(x_k,r_k)\subseteq U_k.
$$
Therefore $N(x+x_k, U_k)$ contains all sufficiently large integers in $(A_x+i_{x,k}) \cap B_k$, which is an $\mathsf{I}$-positive set. It follows by construction that 
$$
\bigcap\nolimits_{k \in\omega} (X_0+x_k) \subseteq \mathrm{Univ}_T(\mathsf{I}),
$$
so that there exists a comeager set of $\mathsf{I}$-universal vectors. 
\end{proof}

\begin{rmk}
It is clear that Theorem \ref{thm:sufficientforIuniversal} holds, with the same proof, for 
metrizable 
Baire topological groups $X$. 
\end{rmk}

As an application, we obtain a generalization of \cite[Theorem 5.23]{MR4238631}: 
\begin{cor}\label{cor:extensionhilbertcharactrization}
Let $(X,T)$ be a linear dynamical system, where $X$ is a Banach space. 
Fix a lscsm $\varphi: \mathcal{P}(\omega) \to [0,\infty)$ such that $\|\cdot\|_\varphi$ is translation invariant and define $\mathsf{I}:=\mathrm{Exh}(\varphi)$. 
Also, suppose that there exists a dense set of uniformly recurrent vectors. 

Then the following are equivalent\textup{:}
\begin{enumerate}[label={\rm (\roman{*})}]
\item \label{item:1orbit22} $T$ is $\mathsf{I}$-hypercyclic\textup{;}
\item \label{item:2orbit22} $T$ is hypercyclic and $c_\varphi(T,0)>0$\textup{;}
\item \label{item:3orbit22} $T$ is hypercyclic and $\{x \in X: 0 \text{ is an }\mathsf{I}\text{-limit point of }\mathrm{orb}(x,T)\}$ is comeager\textup{;}
\item \label{item:4orbit22} There exists a hypercyclic vector $x$ such that $0$ is an $\mathsf{I}$-limit point of its orbit\textup{.}
\end{enumerate}
\end{cor}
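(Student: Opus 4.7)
The plan is to establish the equivalences in a cycle: $\text{(ii)} \Rightarrow \text{(i)}$, $\text{(i)} \Rightarrow \text{(ii)}$, $\text{(ii)} \Rightarrow \text{(iii)}$, $\text{(iii)} \Rightarrow \text{(iv)}$, and $\text{(iv)} \Rightarrow \text{(ii)}$. The implication $\text{(ii)} \Rightarrow \text{(i)}$ is a direct application of Theorem \ref{thm:sufficientforIuniversal}, whose hypotheses are precisely the universality of $T$, the positivity of $c_\varphi(T,0)$, the translation invariance of $\|\cdot\|_\varphi$, and the existence of a dense set of uniformly recurrent vectors.

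The crux is $\text{(i)} \Rightarrow \text{(ii)}$, which I would handle via a scaling argument that leverages the Banach space structure crucially. Fix $U_k := B(0, 1/k)$ as a decreasing local base at $0$. For any hypercyclic vector $y \in X$ and any integer $k \ge 1$, the linearity of $T$ yields $T^n(y/k) = T^n(y)/k$, so $T^n(y/k) \in B(0, 1/k)$ if and only if $T^n(y) \in B(0, 1)$; this gives the identity $N(y/k, U_k) = N(y, B(0, 1))$. Moreover, $y/k$ is itself hypercyclic since scalar multiplication by $1/k$ is a homeomorphism of $X$ that commutes with $T$. Setting $\kappa_0 := \sup\{\|N(y, B(0,1))\|_\varphi : y \text{ hypercyclic}\}$, which is strictly positive by (i) (taking $y$ to be an $\mathsf{I}$-hypercyclic vector), the preceding identity shows $\sup_z \|N(z, U_k)\|_\varphi \ge \kappa_0$ for every $k\ge 1$, whence $c_\varphi(T,0) \ge \kappa_0 > 0$.

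The remaining implications are short. For $\text{(ii)} \Rightarrow \text{(iii)}$, Theorem \ref{cor:Ilimitswithcvarphi} applied at $\eta = 0$ produces a comeager set of $y \in X$ with $\lim_{n \in A}T^n y = 0$ for some $A \subseteq \omega$ satisfying $\|A\|_\varphi \ge c_\varphi(T,0) > 0$, which lies inside the set in (iii). For $\text{(iii)} \Rightarrow \text{(iv)}$, the set of hypercyclic vectors is a dense $G_\delta$ in the Baire space $X$, hence comeager, and its intersection with the comeager set from (iii) is nonempty. Finally, $\text{(iv)} \Rightarrow \text{(ii)}$ follows from Theorem \ref{thm:mareklimit}: for the hypercyclic vector $x$ in (iv), $\lim_k \|N(x, U_k)\|_\varphi > 0$, and this limit is a lower bound for $c_\varphi(T,0)$. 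The only delicate step is $\text{(i)} \Rightarrow \text{(ii)}$, but the scaling trick dispatches it cleanly.
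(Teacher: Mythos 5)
Your proposal is correct and follows essentially the same route as the paper: the implication (ii)$\Rightarrow$(i) via Theorem \ref{thm:sufficientforIuniversal}, (ii)$\Rightarrow$(iii) via Theorem \ref{cor:Ilimitswithcvarphi}, (iii)$\Rightarrow$(iv) via comeagerness of the hypercyclic vectors, (iv)$\Rightarrow$(ii) via Theorem \ref{thm:mareklimit}, and the same dilation/homogeneity argument for (i)$\Rightarrow$(ii), which is exactly the paper's rewriting of $c_\varphi(T,0)$ in \eqref{eq:definitioncT}. No gaps.
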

\begin{proof}
\ref{item:1orbit22} $\implies$ \ref{item:2orbit22}. Suppose that $T$ is $\mathsf{I}$-hypercyclic. Reasoning as in \cite[Fact 4.4]{MR3255465}, we can rewrite equivalently
\begin{equation}\label{eq:definitioncT}
c_\varphi(T,0)=\sup \left\{ \| N(x, B(0,1))\|_\varphi: x \text{ is a hypercyclic vector}\right\}.
\end{equation}
(Indeed, by the homogeneity of $T$, the set of hypercyclic vectors is invariant to dilations.) However, the latter quantity is certainly positive if $T$ admits an $\mathsf{I}$-hypercyclic vector. 

\medskip 

\ref{item:2orbit22} $\implies$ \ref{item:3orbit22}. It follows by Theorem \ref{thm:mareklimit} and Theorem \ref{cor:Ilimitswithcvarphi}.

\medskip

\ref{item:3orbit22} $\implies$ \ref{item:4orbit22}. It follows by the fact that the set of hypercyclic vectors is comeager, see Theorem \ref{thm:univTemptycomeager}. 

\medskip 

\ref{item:4orbit22} $\implies$ \ref{item:2orbit22}. Fix a hypercyclic vector $x \in X$ and $A \in \mathsf{I}^+$ such that $\lim_{n \in A}T^nx=0$. Thanks to Theorem \ref{thm:mareklimit} and the definition of $c_\varphi(T,0)$, we obtain that 
$$
c_\varphi(T,0) \ge \lim_{k\to \infty} \|N(x,B(0,2^{-k})\|_\varphi \ge \|A\|_\varphi >0.
$$

\medskip

\ref{item:2orbit22} $\implies$ \ref{item:1orbit22}. It follows by Theorem \ref{thm:sufficientforIuniversal}.  
\end{proof}

It is worth noting that the equivalent definition of the parameter $c_\varphi(T,0)$ in \eqref{eq:definitioncT} does not rely on the full linearity of $T$: indeed, it is sufficient that $T(\alpha y)=\alpha T(y)$ for all vectors $y \in X$ and all scalars $\alpha$. A similar comment goes to Corollary \ref{cor:flkjshlkjghkj} below. 

As another application, 
we obtain a result related to \cite[Proposition 14]{MR3334899}. 
\begin{cor}\label{cor:kfjdshlgkjhsdlkhj}
Let $(X,T)$ be a universal dynamical system, where $X$ is a Baire 
metric space and the metric $d$ is unbounded. Also, fix $\eta \in X$ and suppose that 
$$
\kappa:=\inf_{r >0} \sup \left\{\|\omega \setminus N(x,B(\eta,r))\|_\varphi: x \text{ is a universal point}\right\}>0.
$$
and fix a lscsm $\varphi: \mathcal{P}(\omega) \to [0,\infty)$ such that $\|\cdot\|_\varphi$ is translation invariant.

Then there exists a comeager set of vectors $y \in X$ such that 
$\lim_{n\in A} d(T^n y,\eta)=+\infty$  for some $A\subseteq \omega $ with $\|A\|_\varphi\ge \kappa$. 
\end{cor}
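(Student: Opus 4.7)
The plan is to mimic the argument of Theorem~\ref{cor:Ilimitswithcvarphi}, but with the decreasing local base $(B(\eta, r_k))$ at $\eta$ replaced by the exteriors $V_r := \{z \in X : d(z, \eta) > r\}$ for $r \to +\infty$, playing the role of a ``decreasing local base at $+\infty$.'' The key observation is that the sought property
$$
\lim\nolimits_{n \in A} d(T^n y, \eta) = +\infty \text{ for some } A \subseteq \omega \text{ with } \|A\|_\varphi \geq \kappa
$$
is exactly the statement that $+\infty$ is an $\mathsf{I}$-limit point of the real sequence $(d(T^n y, \eta))_{n \in \omega}$ in the first countable space $[0, +\infty]$ with $\mathfrak{u}(+\infty) \geq \kappa$, where $\mathsf{I} := \mathrm{Exh}(\varphi)$. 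Hence by Theorem~\ref{thm:mareklimit} applied in $[0, +\infty]$ with local base $((r_k, +\infty] : k \in \omega)$ at $+\infty$, it suffices to build a comeager set of $y \in X$ such that $\lim_k \|N(y, V_{r_k})\|_\varphi \geq \kappa$ along some fixed sequence $r_k \to +\infty$.

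For the construction, I would set $r_k := 2^k$ and, for each $k$, use the definition of $\kappa$ to pick a universal point $x_k \in X$ with $\|\omega \setminus N(x_k, B(\eta, r_k + 1))\|_\varphi \geq \kappa(1 - 2^{-k})$. The next step is a sphere-avoidance adjustment: choose $r'_k \in [r_k, r_k + 1]$ outside the countable set $\{d(T^n x_k, \eta) : n \in \omega\}$. This guarantees both that $T^n x_k \notin \partial V_{r'_k}$ for every $n$, and that
$$
N(x_k, V_{r'_k}) = \omega \setminus N(x_k, B(\eta, r'_k)) \supseteq \omega \setminus N(x_k, B(\eta, r_k + 1));
$$
monotonicity of $\|\cdot\|_\varphi$ then preserves the bound $\|N(x_k, V_{r'_k})\|_\varphi \geq \kappa(1 - 2^{-k})$. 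Proposition~\ref{prop:lscsminclusion} applied with $U = V_{r'_k}$ and witness $x_k$ now yields a dense $G_\delta$-set $E_k \subseteq X$ on which $\|N(y, V_{r'_k})\|_\varphi \geq \|N(x_k, V_{r'_k})\|_\varphi \geq \kappa(1 - 2^{-k})$.

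Intersecting in the Baire space $X$, the set $E := \bigcap_k E_k$ is comeager. Since $r'_{k+1} \geq 2^{k+1} \geq 2^k + 1 \geq r'_k$ for all $k \geq 0$, the exteriors $V_{r'_k}$ and hence the return sets $N(y, V_{r'_k})$ are nonincreasing in $k$, so by monotonicity the quantity $\|N(y, V_{r'_k})\|_\varphi$ is itself nonincreasing and its limit is at least $\kappa$ for every $y \in E$. Identifying $\{n : d(T^n y, \eta) > r'_k\}$ with $N(y, V_{r'_k})$ and applying Theorem~\ref{thm:mareklimit} to $(d(T^n y, \eta))_{n \in \omega}$ in $[0, +\infty]$ with the local base $((r'_k, +\infty])_k$ at $+\infty$ gives, for every $y \in E$, a set $A \subseteq \omega$ with $\|A\|_\varphi \geq \kappa$ and $\lim_{n \in A} d(T^n y, \eta) = +\infty$, which is the required conclusion.

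The main subtlety is the sphere-avoidance step. Unlike in Theorem~\ref{cor:Ilimitswithcvarphi}, where a single universal vector $y$ lets one avoid a single countable set of bad radii once and for all, here each witness $x_k$ carries its own countable bad set of radii, so the adjustment must be made separately at every step. The unit buffer between $r_k$ and $r'_k$, combined with monotonicity of $\|\cdot\|_\varphi$, is precisely what makes this adjustment lossless.
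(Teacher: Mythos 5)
Your proof is correct and follows essentially the same route as the paper's: perturb the radius within $[r,r+1]$ to avoid the countable set of spheres hit by the witness's orbit, apply Proposition~\ref{prop:lscsminclusion} for each radius, intersect the resulting dense $G_\delta$-sets, and finish with Theorem~\ref{thm:mareklimit} applied to the sequence $(d(T^ny,\eta))_n$ in $[0,+\infty]$. Your use of the approximants $\kappa(1-2^{-k})$ is a slightly more careful treatment of the supremum in the definition of $\kappa$ (which the paper tacitly treats as attained), but this does not change the argument.
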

\begin{proof}
For each real $r >0$, define the open set $U_r:=X\setminus \overline{B(\eta,r)}$. By the standing hypothesis, for each $r>0$ and $i \in \omega$ there exists a universal point $x_{r,i}$ such that $\|N(x_{r,i},U_r)\|_\varphi \ge \kappa(1-2^{-i})$. 
Hence, we get by Proposition \ref{prop:lscsminclusion} that 
$$
\bigcap\nolimits_{i,h \in \omega,\, h>0}\left\{y \in X: \|N(y,U_h)\|_\varphi \ge \|N(x_{h,i},U_h)\|_\varphi \right\}
$$
is comeager. Therefore there exists a comeager set of vectors $y \in X$ such that 
$$
\|\{n \in \omega: d(T^ny,\eta)\ge r\}\|_\varphi \ge \kappa
$$
for all $r>0$. The conclusion follows applying Theorem \ref{thm:mareklimit} to the space $\mathbb{R}\cup \{\infty\}$ and each real sequence $(d(T^ny,\eta): n \in \omega)$. 
\end{proof}

In the setting of linear dynamical systems, the above corollary simplifies: 
\begin{cor}\label{cor:flkjshlkjghkj}
Let $(X,T)$ be a hypercyclic linear dynamical system, where $X$ is a nonzero Banach space. Fix a hypercyclic vector $x \in X$ and a lscsm $\varphi: \mathcal{P}(\omega) \to [0,\infty)$ such that $\|\cdot\|_\varphi$ is translation invariant. 

Then there exists a comeager set of vectors $y \in X$ such that $\lim_{n\in A} \|T^ny\|=+\infty$  for some $A\subseteq \omega $ with $\|A\|_\varphi\ge \|\,\omega\setminus N(x,B(0,1))\|_\varphi$. 
\end{cor}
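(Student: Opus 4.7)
The plan is to derive this result directly from Corollary \ref{cor:kfjdshlgkjhsdlkhj} applied at $\eta = 0$. First I would verify the ambient hypotheses: on the nonzero Banach space $X$ the usual norm metric $d(u,v) = \|u-v\|$ is round (the closure of each open ball is the corresponding closed ball) and unbounded, and $X$ is Baire, so the hypotheses of Corollary \ref{cor:kfjdshlgkjhsdlkhj} on the underlying space are met. Hypercyclicity of $T$ supplies universality. Finally, $d(T^n y, 0) = \|T^n y\|$, so the conclusion $\lim_{n \in A} d(T^n y, 0) = +\infty$ translates into precisely the desired $\lim_{n \in A}\|T^n y\| = +\infty$.

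The heart of the argument is a lower bound for the parameter
$$
\kappa := \inf_{r > 0} \sup\left\{\|\omega \setminus N(y, B(0, r))\|_\varphi : y \text{ hypercyclic}\right\}
$$
of the form $\kappa \ge \|\omega \setminus N(x, B(0,1))\|_\varphi$. For this I would exploit the homogeneity of $T$. Given any $r > 0$, the vector $rx$ is again hypercyclic, since $\mathrm{orb}(rx,T) = r \cdot \mathrm{orb}(x,T)$ and multiplication by $r \neq 0$ is a homeomorphism of $X$. Using $T^n(rx) = rT^n x$, one computes
$$
N(rx, B(0,r)) = \{n \in \omega: \|rT^nx\| < r\} = \{n \in \omega : \|T^n x\| < 1\} = N(x, B(0,1)),
$$
so $\|\omega \setminus N(rx, B(0,r))\|_\varphi = \|\omega \setminus N(x, B(0,1))\|_\varphi$ for every $r > 0$. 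Taking the supremum over hypercyclic vectors and then the infimum over $r > 0$ yields the claimed lower bound for $\kappa$.

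If $\|\omega \setminus N(x, B(0,1))\|_\varphi = 0$ the statement is trivial (take $A = \emptyset$, so that the entire space is the comeager set). Otherwise $\kappa > 0$, and Corollary \ref{cor:kfjdshlgkjhsdlkhj} immediately produces a comeager set of $y \in X$ for which there exists $A \subseteq \omega$ with $\lim_{n \in A}\|T^n y\| = +\infty$ and $\|A\|_\varphi \ge \kappa \ge \|\omega \setminus N(x, B(0,1))\|_\varphi$, finishing the proof. I do not foresee a substantive obstacle; the only care required is the homogeneity computation and the routine verification that Banach spaces satisfy the roundness hypothesis, both of which are straightforward.
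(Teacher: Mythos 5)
Your argument is correct and is essentially the paper's proof: both reduce to Corollary \ref{cor:kfjdshlgkjhsdlkhj} with $\eta=0$ and $\kappa=\gamma:=\|\omega\setminus N(x,B(0,1))\|_\varphi$, using the dilation identity $N(rx,B(0,r))=N(x,B(0,1))$ (the paper phrases it as $N(x,B(0,r))=N(x/r,B(0,1))$) together with the fact that nonzero scalar multiples of hypercyclic vectors are hypercyclic. The only cosmetic difference is the degenerate case $\gamma=0$, where the paper takes the comeager set of hypercyclic vectors (whose orbits are unbounded, so an infinite $A$ with $\lim_{n\in A}\|T^ny\|=\infty$ always exists) rather than your slightly awkward choice $A=\emptyset$.
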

\begin{proof}
Define $\gamma:=\|\,\omega\setminus N(x,B(0,1))\|_\varphi$. If $\gamma=0$, then it is enough to choose every hypercyclic vector $y \in X$ (hence, in a comeager set). Otherwise, suppose that $\gamma>0$. Since $N(x,B(0,r))=N(x/r, B(0,1))$ and $x/r$ is still hypercyclic for each $r>0$, it is enough to apply Corollary \ref{cor:kfjdshlgkjhsdlkhj} with $\eta=0$ and $\kappa=\gamma>0$. 
\end{proof}




\subsection{Ansari-type result} 
An ideal $\mathsf{I}$ on $\omega$ is said to be \emph{arithmetic} if 
$$
S \in \mathsf{I}
 \quad \text{ if and only if }\quad 
k\cdot S+h \in \mathsf{I}
$$
for all $S\subseteq \omega$ and $k,h \in \omega$ with $k\neq 0$. 
The class of arithmetic ideals is very large. For, let $\mu^\star: \mathcal{P}(\omega) \to \mathbb{R}$ be a monotone subadditive map such that $\mu^\star(\omega)=1$ and $\mu^\star(k\cdot S+h)=\mu^\star(S)$ for all $S\subseteq \omega$ and $k,h \in \omega$ with $k\neq 0$ (which can be regarded as an abstract formulation of \textquotedblleft upper density,\textquotedblright\, see \cite{MR4054777}). Then the family
$$
\mathsf{I}(\mu^\star):=\left\{S\subseteq \omega: \mu^\star(S)=0\right\}
$$
is an arithmetic ideal. Examples of these maps $\mu^\star$ include the upper asymptotic, upper Banach, upper
analytic, upper logarithmic, upper P\'{o}lya, and upper Buck densities, see \cite[Section 6 and Examples 4, 5,
6, and 8]{MR4054777}. In particular, $\mathsf{L}$, $\mathsf{Z}$, and $\mathsf{B}$ are arithmetic ideals. It is worth mentioning that also rather exotic examples of such maps $\mu^\star$ exist (e.g., finitely additive ones, see \cite[Remark 3]{MR4054777}).

\begin{thm}\label{thm:generalizedansari}
Let $(X,T)$ be a linear dynamical system, where $X$ is a Fr\'{e}chet space. Let also $\mathsf{I}$ be an arithmetic ideal on $\omega$ and fix a nonzero $k \in \omega$. 
Then $$\mathrm{Univ}_{T^k}(\mathsf{I})=\mathrm{Univ}_{T}(\mathsf{I}).$$
\end{thm}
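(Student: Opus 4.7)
The plan is to prove the two inclusions separately. The inclusion $\mathrm{Univ}_{T^k}(\mathsf{I})\subseteq \mathrm{Univ}_T(\mathsf{I})$ follows directly from arithmeticity: if $x\in \mathrm{Univ}_{T^k}(\mathsf{I})$ and $V\subseteq X$ is nonempty open, then $k\cdot N_{T^k}(x,V)\subseteq N_T(x,V)$, and since $N_{T^k}(x,V)\in \mathsf{I}^+$ and the operation $S\mapsto kS$ preserves $\mathsf{I}^+$ by hypothesis, we obtain $N_T(x,V)\in \mathsf{I}^+$.

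For the reverse inclusion, fix $x\in \mathrm{Univ}_T(\mathsf{I})$ and define, for $j=0,1,\ldots,k-1$, the sets $\Gamma_j := \Gamma_{\mathrm{orb}(T^jx, T^k)}(\mathsf{I})$, which are closed in $X$. The target is to show $\Gamma_0=X$, as this is equivalent to $x\in \mathrm{Univ}_{T^k}(\mathsf{I})$. First I would show that $\bigcup_{j=0}^{k-1}\Gamma_j = X$ via the residue decomposition
\[
N_T(x,U)\;=\;\bigsqcup_{j=0}^{k-1}\bigl(k\cdot N_{T^k}(T^jx,U)+j\bigr);
\]
arithmeticity and closure of $\mathsf{I}$ under finite unions imply that for each $\eta\in X$ and each open $U\ni \eta$ there exists $j(U)\in\{0,\ldots,k-1\}$ with $N_{T^k}(T^{j(U)}x,U)\in \mathsf{I}^+$, and running this over a countable decreasing local base at $\eta$ (since Fr\'echet spaces are metrizable) and applying the pigeonhole principle yields a single $j_0$ with $\eta\in \Gamma_{j_0}$. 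Baire's theorem then provides some index $j_0$ for which $\Gamma_{j_0}$ has nonempty interior $U_0$.

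Next I would upgrade this to $\Gamma_{j_0}=X$. Since $U_0\subseteq \overline{\mathrm{orb}(T^{j_0}x,T^k)}$, the Bourdon--Feldman somewhere-dense-orbit theorem for continuous linear operators on Fr\'echet spaces gives that $T^{j_0}x$ is $T^k$-hypercyclic. Given an arbitrary nonempty open $V\subseteq X$, pick integers $m_0\leq m_1$ with $(T^k)^{m_0}T^{j_0}x\in U_0$ and $(T^k)^{m_1}T^{j_0}x\in V$; continuity of $(T^k)^{m_1-m_0}$ produces an open set $W\subseteq U_0$ containing the point $(T^k)^{m_0}T^{j_0}x$ such that $(T^k)^{m_1-m_0}(W)\subseteq V$. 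Because that point lies in $\Gamma_{j_0}$, one has $N_{T^k}(T^{j_0}x,W)\in \mathsf{I}^+$, and translation invariance of $\mathsf{I}$ (another consequence of arithmeticity) transfers this through the shift by $m_1-m_0$ to $N_{T^k}(T^{j_0}x,V)\in \mathsf{I}^+$. This is the step I expect to be the main obstacle, since it is where Bourdon--Feldman, continuity of iterates, and translation invariance must be combined.

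Finally, a short continuity-plus-translation-invariance calculation shows that $T\Gamma_j\subseteq \Gamma_{(j+1)\bmod k}$ (the case $j=k-1$ uses shift-invariance of $\mathsf{I}^+$). Applying $T$ to $\Gamma_{j_0}=X$ gives $T(X)\subseteq \Gamma_{(j_0+1)\bmod k}$, and since $T$ has dense range and $\Gamma_{(j_0+1)\bmod k}$ is closed, this containment upgrades to an equality. Iterating through all residues modulo $k$ yields $\Gamma_j=X$ for every $j$, hence in particular $\Gamma_0=X$, which completes the proof.
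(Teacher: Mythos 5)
Your argument is correct, and for the nontrivial inclusion $\mathrm{Univ}_{T}(\mathsf{I})\subseteq \mathrm{Univ}_{T^k}(\mathsf{I})$ it takes a genuinely different route from the paper. The paper invokes Ansari's theorem only to get that each $T^ix$ is ($\mathrm{Fin}$-)hypercyclic for $T^k$, and then runs a direct additive computation: it writes $U\supseteq y+V+V$, picks times $n_i=a_ik+i$ with $T^{n_i}x\in y+V$ and a neighborhood $W$ of $0$ with $T^{n_i}[W]\subseteq V$, splits the $\mathsf{I}$-positive set $N_T(x,x+W)$ into residue classes mod $k$ to extract an $\mathsf{I}$-positive piece $S$ in class $h$, and uses the identity $T^{n_j+n}x=T^{n_j}(T^nx-x)+T^{n_j}x$ to place $a_j+\frac{1}{k}\cdot(S+j)$ inside $N_{T^k}(x,U)$. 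You instead work with the closed sets $\Gamma_j$ of $\mathsf{I}$-cluster points of the $T^k$-orbits of $T^jx$: the residue decomposition plus pigeonhole over a countable decreasing base shows they cover $X$, Baire's theorem gives one with nonempty interior, Bourdon--Feldman together with the translation invariance built into arithmeticity upgrades that $\Gamma_{j_0}$ to all of $X$, and the rotation $T\Gamma_j\subseteq\Gamma_{(j+1)\bmod k}$ combined with dense range and closedness of the cluster-point sets carries the conclusion around to $\Gamma_0$. Every step checks out: $\Gamma_{j_0}\subseteq\overline{\mathrm{orb}(T^{j_0}x,T^k)}$ legitimizes the appeal to Bourdon--Feldman, the point $(T^k)^{m_0}T^{j_0}x$ does lie in $U_0\subseteq\Gamma_{j_0}$ so $N_{T^k}(T^{j_0}x,W)\in\mathsf{I}^+$ as claimed, and the $j=k-1$ case of the rotation is exactly where shift invariance is needed. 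The trade-off is that you replace Ansari by the strictly stronger Bourdon--Feldman theorem and replace the explicit additive estimate by soft category arguments; the paper's version is more quantitative and, per its closing remark, adapts to metric groups once an Ansari-type input is available, whereas yours adapts to any Baire first-countable setting in which somewhere dense $T^k$-orbits are automatically dense.
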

\begin{proof} 
We start by proving the easy inclusion $\mathrm{Univ}_{T^k}(\mathsf{I}) \subseteq \mathrm{Univ}_{T}(\mathsf{I})$. This is trivial if $\mathrm{Univ}_{T^k}(\mathsf{I})=\emptyset$. Otherwise, fix a vector $x \in X$ which is $\mathsf{I}$-hypercyclic for $T^k$, let $U\subseteq X$ be a nonempty open set. Since 
$$
N_T(x,U)=
\bigcup_{i=0}^{k-1}\left(k\cdot N_{T^k}(T^ix,U)+i\right).
$$
we obtain that $N_T(x,U)$ contains (some translation of) $k\cdot N_{T^k}(x,U) \in \mathsf{I}^+$. By the arbitrariness of $U$, $x$ is $\mathsf{I}$-hypercyclic for $T$. 

Conversely, we need to show that $\mathrm{Univ}_{T}(\mathsf{I}) \subseteq \mathrm{Univ}_{T^k}(\mathsf{I})$. 
For, suppose that the left hand side is nonempty and let $x$ be an $\mathsf{I}$-hypercyclic vector for $T$ (hence, in particular, $x$ is hypercyclic). 
It follows by Lemma \ref{lem:easylem} and Ansari's theorem \cite[Theorem 1]{MR1319961} that 
\begin{equation}\label{eq:inclusionansari}
\{x,Tx,\ldots,T^{k-1}x\}\subseteq \mathrm{Univ}_{T}(\mathrm{Fin})=\mathrm{Univ}_{T^k}(\mathrm{Fin}).
\end{equation}
Let $U\subseteq X$ be a nonempty open set. Now, fix $y \in U$ and a neighborhood $V$ of $0$ such that $y+V+V\subseteq U$, 
see e.g. \cite[Chapter 3]{MR0000265}.
It follows by \eqref{eq:inclusionansari} that we can pick 
$$
a_i \in N_{T^k}(T^ix,y+V)
$$
for each $i \in \{0,\ldots,k-1\}$, thus $T^{n_i}x\in y+V$ with $n_i:=a_ik+i$. By the continuity of $T$, there exists a neighborhood $W$ of $0$ such that $T^{n_i}[W] \subseteq V$ for all $i \in \{0,\ldots,k-1\}$. 
At this point, we have $N_{T}(x,x+W) \in \mathsf{I}^+$ since $x$ is an $\mathsf{I}$-hypercyclic vector of $T$. Hence there exists $h \in \{0,\ldots,k-1\}$ such that 
$$
S:=N_{T}(x,x+W) \cap (k\cdot \omega+h) \in\mathsf{I}^+.
$$ 
More explicitly, $S$ can be rewritten as $\{n \in \omega: T^nx-x \in W \text{ and }n\equiv h\bmod{k}\}$. Set $j:=k-h$ if $h\neq 0$ and $j:=0$ if $h=0$, so that $j+h \in \{0,k\}$ and $j \in \{0,\ldots,k-1\}$. To conclude the proof, pick an integer $n \in S$. Then 
\begin{displaymath}
T^{n_j}T^nx=T^{n_j}(T^nx-x)+T^{n_j}x \in T^{n_j}[W]+(y+V) \subseteq y+V+V\subseteq U.
\end{displaymath}
At the same time, setting $s:=a_j+\frac{1}{k}(n+j)$, we have 
$$
T^{n_j}T^nx=T^{a_jk+j+n}x=(T^k)^{s}x.
$$
Therefore $N_{T^k}(x,U)$ contains $a_j+\nicefrac{1}{k}\cdot (S+j)$, which is an $\mathsf{I}$-positive set since $\mathsf{I}$ is arithmetic. Therefore $x$ is an $\mathsf{I}$-hypercyclic vector for $T^k$.  
\end{proof}

\begin{rmk}
More generally, 
the analogue of Theorem \ref{thm:generalizedansari} holds replacing $\mathsf{I}^+$ with an arbitrary Furstenberg family $\mathsf{F}$ satisying the following property: 
\begin{equation}\label{eq:hypercyclicitycondition}
\forall S \in \mathsf{F},\forall a_0,\ldots,a_{k-1} \in \omega, \quad 
\bigcup_{h=0}^{k-1}(a_h+\nicefrac{1}{k}\cdot (S_h-h)) \in \mathsf{F},
\end{equation}
where $S_h:=S\cap (k\cdot \omega +h)$ for all $h \in \{0,1,\ldots,k-1\}$. 
(Recall that the integer $k$ is fixed.) This includes, for instance, also the case of $\mathsf{F}$ being the family of positive lower asymptotic density sets, as in \cite[Theorem 4.7]{MR2231886}. 

It is worth mentioning that Bonilla, Grosse-Erdmann, L\'{o}pez-Mart\'{\i}nez, and Peris very recently proved in \cite[Theorem 8.8]{MR4489276} that, if $T$ is linear, the analogue of Theorem \ref{thm:generalizedansari} holds for Furstenberg families $\mathsf{F}$ satisfying the following requirements: 
(i) $S \in \mathsf{F}$ if and only if $k\cdot S \in \mathsf{F}$ for every $S\subseteq \omega$, and 
(ii) For every finite cover $\{I_1,\ldots,I_q\}$ of $\omega$ and for every $h_1,\ldots,h_q \in \omega$ and for all $S \in \mathsf{F}$, then $\bigcup_j ((S\cap I_j)+h_j) \in \mathsf{F}$. 
However, it is not difficult to see that latter conditions are strictly stronger than ours. 
For instance, let $\mathsf{F}$ be the Furstenberg family of sets with positive lower Buck density, see e.g. \cite[Section 4]{MR4054777}, which can be written explicitly as 
$$
\mathsf{F}:=\{S\subseteq \omega: k\cdot \omega+h\subseteq S \text{ for some  }k,h \in \omega \text{ with }k\neq 0\}. 
$$
Then $\mathsf{F}$ satisfies property \eqref{eq:hypercyclicitycondition}, but it fails the latter condition (ii): indeed, set  $I_1:=\{n!+n: n\ge 1\}$ and $I_2:=\omega\setminus I_1$ and note that both $I_1$ and $I_2$ do not contain any infinite arithmetic progression $k\cdot \omega+h$, therefore $\omega\in \mathsf{F}$ while $(I_1+1) \cup I_2=I_2 \notin \mathsf{F}$. 


In the same work, the authors studied the analogous result from a recurrence point of view, see  \cite[Theorem 8.7]{MR4489276}.
\end{rmk}

\begin{rmk}
If we do not consider the base case $\mathsf{I}=\mathrm{Fin}$, the proof of Theorem \ref{thm:generalizedansari} does not use the full linearity of $T$. Indeed, taking into account \cite[Exercise 6.1.7]{MR2919812}, the analogue of Theorem \ref{thm:generalizedansari} holds in the nonlinear setting for dynamical systems $(X,T)$ where $X$ is a metric group and $T$ is a homomorphism, under the additional hypothesis that $\mathrm{Univ}_T(\mathrm{Fin})$ contains a connected dense subset. 
\end{rmk}

\bibliographystyle{amsplain}
\bibliography{ideale}

\providecommand{\MR}[1]{}
\providecommand{\bysame}{\leavevmode\hbox to3em{\hrulefill}\thinspace}
\providecommand{\MR}{\relax\ifhmode\unskip\space\fi MR }
\providecommand{\MRhref}[2]{%
  \href{http://www.ams.org/mathscinet-getitem?mr=#1}{#2}
}
\providecommand{\href}[2]{#2}
\begin{thebibliography}{10}

\bibitem{Abak}
E.~Abakumov and A.~Abbar, \emph{On several dynamical properties of shifts
  acting on directed trees}, Canad. J. Math. (2025),
  \href{https://doi:10.4153/S0008414X24000981}{doi:10.4153/S0008414X24000981}.

\bibitem{MR1319961}
S.~I. Ansari, \emph{Hypercyclic and cyclic vectors}, J. Funct. Anal.
  \textbf{128} (1995), no.~2, 374--383. \MR{1319961}

\bibitem{MR2323544}
C.~Badea and S.~Grivaux, \emph{Unimodular eigenvalues, uniformly distributed
  sequences and linear dynamics}, Adv. Math. \textbf{211} (2007), no.~2,
  766--793. \MR{2323544}

\bibitem{MR3883171}
M.~Balcerzak and P.~Leonetti, \emph{On the relationship between ideal cluster
  points and ideal limit points}, Topology Appl. \textbf{252} (2019), 178--190.
  \MR{3883171}

\bibitem{MR3405547}
A.~Bartoszewicz, P.~Das, and S.~G{\l}{\c{a}}b, \emph{On matrix summability of
  spliced sequences and {$A$}-density of points}, Linear Algebra Appl.
  \textbf{487} (2015), 22--42. \MR{3405547}

\bibitem{MR2231886}
F.~Bayart and S.~Grivaux, \emph{Frequently hypercyclic operators}, Trans. Amer.
  Math. Soc. \textbf{358} (2006), no.~11, 5083--5117. \MR{2231886}

\bibitem{MR2533318}
F.~Bayart and \'{E}. Matheron, \emph{Dynamics of linear operators}, Cambridge
  Tracts in Mathematics, vol. 179, Cambridge University Press, Cambridge, 2009.
  \MR{2533318}

\bibitem{MR3334899}
F.~Bayart and I.~Z. Ruzsa, \emph{Difference sets and frequently hypercyclic
  weighted shifts}, Ergodic Theory Dynam. Systems \textbf{35} (2015), no.~3,
  691--709. \MR{3334899}

\bibitem{MR4508017}
L.~Bernal-Gonz\'{a}lez and A.~Bonilla, \emph{Total and non-total suborbits for
  hypercyclic operators}, Rev. R. Acad. Cienc. Exactas F\'{\i}s. Nat. Ser. A
  Mat. RACSAM \textbf{117} (2023), no.~1, Paper No. 17, 8. \MR{4508017}

\bibitem{MR3084499}
N.~C. Bernardes, Jr., A.~Bonilla, V.~M\"{u}ller, and A.~Peris,
  \emph{Distributional chaos for linear operators}, J. Funct. Anal.
  \textbf{265} (2013), no.~9, 2143--2163. \MR{3084499}

\bibitem{MR3552249}
J.~B\`es, Q.~Menet, A.~Peris, and Y.~Puig, \emph{Recurrence properties of
  hypercyclic operators}, Math. Ann. \textbf{366} (2016), no.~1-2, 545--572.
  \MR{3552249}

\bibitem{BonillaErdmann}
A.~Bonilla and K.-G. Grosse-Erdmann, \emph{Zero-one law of orbital limit points
  for weighted shifts}, manuscript, arxiv.org/abs/2007.01641.

\bibitem{MR3847081}
\bysame, \emph{Upper frequent hypercyclicity and related notions}, Rev. Mat.
  Complut. \textbf{31} (2018), no.~3, 673--711. \MR{3847081}

\bibitem{MR4489276}
A.~Bonilla, K.-G. Grosse-Erdmann, A.~L\'{o}pez-Mart\'{\i}nez, and A.~Peris,
  \emph{Frequently recurrent operators}, J. Funct. Anal. \textbf{283} (2022),
  no.~12, Paper No. 109713, 36. \MR{4489276}

\bibitem{MR4124855}
P.~Borodulin-Nadzieja and B.~Farkas, \emph{Analytic {P}-ideals and {B}anach
  spaces}, J. Funct. Anal. \textbf{279} (2020), no.~8, 108702, 31. \MR{4124855}

\bibitem{MR3436368}
P.~Borodulin-Nadzieja, B.~Farkas, and G.~Plebanek, \emph{Representations of
  ideals in {P}olish groups and in {B}anach spaces}, J. Symb. Log. \textbf{80}
  (2015), no.~4, 1268--1289. \MR{3436368}

\bibitem{CardecciaMuro}
R.~Cardeccia and S.~Muro, \emph{Frequently recurrent backward shifts},
  \href{https://arxiv.org/abs/2407.11799}{arXiv:2407.11799}.

\bibitem{MR4398486}
\bysame, \emph{Arithmetic progressions and chaos in linear dynamics}, Integral
  Equations Operator Theory \textbf{94} (2022), no.~2, Paper No. 11, 18.
  \MR{4398486}

\bibitem{MR4529811}
\bysame, \emph{Multiple recurrence and hypercyclicity}, Math. Scand.
  \textbf{128} (2022), no.~3, 589--610. \MR{4529811}

\bibitem{MR2881542}
K.~Chan and I.~Seceleanu, \emph{Hypercyclicity of shifts as a zero-one law of
  orbital limit points}, J. Operator Theory \textbf{67} (2012), no.~1,
  257--277. \MR{2881542}

\bibitem{MR3490772}
S.~Charpentier, R.~Ernst, and Q.~Menet, \emph{{$\Gamma$}-supercyclicity}, J.
  Funct. Anal. \textbf{270} (2016), no.~12, 4443--4465. \MR{3490772}

\bibitem{MR3275437}
G.~Costakis, A.~Manoussos, and I.~Parissis, \emph{Recurrent linear operators},
  Complex Anal. Oper. Theory \textbf{8} (2014), no.~8, 1601--1643. \MR{3275437}

\bibitem{MR2943720}
G.~Costakis and I.~Parissis, \emph{Szemer\'{e}di's theorem, frequent
  hypercyclicity and multiple recurrence}, Math. Scand. \textbf{110} (2012),
  no.~2, 251--272. \MR{2943720}

\bibitem{MR4242546}
R.~Ernst, C.~Esser, and Q.~Menet, \emph{{$\mathcal{U}$}-frequent hypercyclicity
  notions and related weighted densities}, Israel J. Math. \textbf{241} (2021),
  no.~2, 817--848. \MR{4242546}

\bibitem{MR1680630}
I.~Farah, \emph{Ideals induced by {T}sirelson submeasures}, Fund. Math.
  \textbf{159} (1999), no.~3, 243--258. \MR{1680630}

\bibitem{MR1711328}
\bysame, \emph{Analytic quotients: theory of liftings for quotients over
  analytic ideals on the integers}, Mem. Amer. Math. Soc. \textbf{148} (2000),
  no.~702, xvi+177. \MR{1711328}

\bibitem{FKL}
R.~Filipow, A.~Kwela, and P.~Leonetti, \emph{Borel complexity of sets of ideal
  limit points}, \href{https://arxiv.org/abs/2411.10866}{arXiv:2411.10866}.

\bibitem{Filipow18}
R.~Filip\'{o}w and J.~Tryba, \emph{Ideal convergence versus matrix
  summability}, Studia Math. \textbf{245} (2019), no.~2, 101--127. \MR{3863065}

\bibitem{MR4041540}
\bysame, \emph{Representation of ideal convergence as a union and intersection
  of matrix summability methods}, J. Math. Anal. Appl. \textbf{484} (2020),
  no.~2, 123760, 21. \MR{4041540}

\bibitem{MR632187}
A.R. Freedman and J.J. Sember, \emph{Densities and summability}, Pacific J.
  Math. \textbf{95} (1981), no.~2, 293--305. \MR{632187}

\bibitem{MR1181163}
J.A. Fridy, \emph{Statistical limit points}, Proc. Amer. Math. Soc.
  \textbf{118} (1993), no.~4, 1187--1192. \MR{1181163}

\bibitem{MR0603625}
H.~Furstenberg, \emph{Recurrence in ergodic theory and combinatorial number
  theory}, Princeton University Press, Princeton, N.J., 1981, M. B. Porter
  Lectures. \MR{603625}

\bibitem{MR3771261}
S.~Grivaux, \emph{Frequently hypercyclic operators with irregularly visiting
  orbits}, J. Math. Anal. Appl. \textbf{462} (2018), no.~1, 542--553.
  \MR{3771261}

\bibitem{MR4523465}
S.~Grivaux and A.~L\'{o}pez-Mart\'{\i}nez, \emph{Recurrence properties for
  linear dynamical systems: an approach via invariant measures}, J. Math. Pures
  Appl. (9) \textbf{169} (2023), 155--188. \MR{4523465}

\bibitem{Grivaux2023}
S.~Grivaux, A.~L\'{o}pez-Mart\'{\i}nez, and A.~Peris, \emph{Questions in linear
  recurrence: From the ${T} \oplus {T}$-problem to lineability}, manuscript,
  https://arxiv.org/pdf/2212.03652.

\bibitem{MR3255465}
S.~Grivaux and \'{E}. Matheron, \emph{Invariant measures for frequently
  hypercyclic operators}, Adv. Math. \textbf{265} (2014), 371--427.
  \MR{3255465}

\bibitem{MR4238631}
S.~Grivaux, \'{E}. Matheron, and Q.~Menet, \emph{Linear dynamical systems on
  {H}ilbert spaces: typical properties and explicit examples}, Mem. Amer. Math.
  Soc. \textbf{269} (2021), no.~1315, v+147. \MR{4238631}

\bibitem{MR1685272}
K.-G. Grosse-Erdmann, \emph{Universal families and hypercyclic operators},
  Bull. Amer. Math. Soc. (N.S.) \textbf{36} (1999), no.~3, 345--381.
  \MR{1685272}

\bibitem{MR2153969}
K.-G. Grosse-Erdmann and A.~Peris, \emph{Frequently dense orbits}, C. R. Math.
  Acad. Sci. Paris \textbf{341} (2005), no.~2, 123--128. \MR{2153969}

\bibitem{MR2919812}
\bysame, \emph{Linear chaos}, Universitext, Springer, London, 2011.
  \MR{2919812}

\bibitem{MR748847}
W.~Just and A.~Krawczyk, \emph{On certain {B}oolean algebras
  {${\mathscr{P}}(\omega )/I$}}, Trans. Amer. Math. Soc. \textbf{285} (1984),
  no.~1, 411--429. \MR{748847}

\bibitem{MR1321597}
A.S. Kechris, \emph{Classical descriptive set theory}, Graduate Texts in
  Mathematics, vol. 156, Springer-Verlag, New York, 1995. \MR{1321597}

\bibitem{MR4404626}
A.~Kwela and P.~Leonetti, \emph{Density-like and generalized density ideals},
  J. Symb. Log. \textbf{87} (2022), no.~1, 228--251. \MR{4404626}

\bibitem{MR4265483}
P.~Leonetti, \emph{A characterization of {C}es{\`a}ro convergence}, Amer. Math.
  Monthly \textbf{128} (2021), no.~6, 559--562. \MR{4265483}

\bibitem{MR3920799}
P.~Leonetti and F.~Maccheroni, \emph{Characterizations of ideal cluster
  points}, Analysis (Berlin) \textbf{39} (2019), no.~1, 19--26. \MR{3920799}

\bibitem{MR4054777}
P.~Leonetti and S.~Tringali, \emph{On the notions of upper and lower density},
  Proc. Edinb. Math. Soc. (2) \textbf{63} (2020), no.~1, 139--167. \MR{4054777}

\bibitem{Louveau1994}
A.~Louveau and B.~Veli\u{c}kovi\'{c}, \emph{A note on {B}orel equivalence
  relations}, Proc. Amer. Math. Soc. \textbf{120} (1994), no.~1, 255--259.
  \MR{1169042}

\bibitem{MR1124539}
K.~Mazur, \emph{{$F_\sigma$}-ideals and {$\omega_1\omega_1^*$}-gaps in the
  {B}oolean algebras {$P(\omega)/I$}}, Fund. Math. \textbf{138} (1991), no.~2,
  103--111. \MR{1124539}

\bibitem{MR3632557}
Q.~Menet, \emph{Linear chaos and frequent hypercyclicity}, Trans. Amer. Math.
  Soc. \textbf{369} (2017), no.~7, 4977--4994. \MR{3632557}

\bibitem{Mezathesis}
D.~Meza-Alc\'{a}ntara, \emph{Ideals and filters on countable sets},  (2009),
  PhD Thesis, UNAM M\'{e}xico.

\bibitem{MR0000265}
L.~Pontrjagin, \emph{Topological {G}roups}, vol. vol. 2., Princeton University
  Press, Princeton, N.J., 1939, Translated from the Russian by Emma Lehmer.
  \MR{265}

\bibitem{MR3742548}
Y.~Puig~de Dios, \emph{Linear dynamics and recurrence properties defined via
  essential idempotents of {$\beta\Bbb{N}$}}, Ergodic Theory Dynam. Systems
  \textbf{38} (2018), no.~1, 285--300. \MR{3742548}

\bibitem{MR2439433}
S.~Shkarin, \emph{On the spectrum of frequently hypercyclic operators}, Proc.
  Amer. Math. Soc. \textbf{137} (2009), no.~1, 123--134. \MR{2439433}

\bibitem{MR1708146}
S.~Solecki, \emph{Analytic ideals and their applications}, Ann. Pure Appl.
  Logic \textbf{99} (1999), no.~1-3, 51--72. \MR{1708146}

\bibitem{MR0593624}
M.~Talagrand, \emph{A simple example of pathological submeasure}, Math. Ann.
  \textbf{252} (1979/80), no.~2, 97--102. \MR{593624}

\end{thebibliography}
\end{document}